\theoremstyle{plain}
\newtheorem*{maintheorem*}{Main Theorem}
\newtheorem*{thm*}{Theorem}
\newtheorem*{thma*}{Theorem A}
\newtheorem*{thmaa*}{Theorem A'}
\newtheorem*{thmb*}{Theorem B}
\newtheorem*{thmo*}{Theorem 1.1}
\newtheorem*{thmc*}{Theorem C}
\newtheorem*{thmd*}{Theorem D}
\newtheorem*{thmf*}{Theorem 4.1}
\newtheorem*{conjecture*}{Conjecture}
\newtheorem*{prop*}{Proposition}
\newtheorem{thm}{Theorem}[section]
\newtheorem{cor}[thm]{Corollary}
\newtheorem{lem}[thm]{Lemma}
\newtheorem{prop}[thm]{Proposition}
\theoremstyle{definition}
\newtheorem*{proof1*}{Proof of (1.5)}
\newtheorem{examp}[thm]{Example}
\newtheorem{remark}[thm]{Remark}
\def\bbq{\mathbb{Q}}
\def\bbf{\mathbb{F}}
\def\bbr{\mathbb{R}}
\def\bba{\mathbb{A}}
\def\bbc{\mathbb{C}}
\def\bbn{\mathbb{N}}
\def\calp{\mathcal{P}}
\def\calg{\mathcal{G}}
\def\calo{\mathcal{O}}
\def\calh{\mathcal{H}}
\def\calz{\mathcal{Z}}
\def\calf{\mathcal{F}}
\def\call{\mathcal{L}}
\def\calt{\mathcal{T}}
\def\a{\alpha}
\def\vare{\varepsilon}
\def\ol{\overline}
\def\suml{\sum\limits}
\def\tle{\triangleleft}
\def\vp{\varphi}
\theoremstyle{remark}  
\newtheorem*{remark*}{Remark}
\begin{document}
\date{}
\title{Normal Subgroup Growth of Linear Groups: \\ the $(G_2, F_4, E_8)$-Theorem}
\author{Michael Larsen and Alexander Lubotzky\\
\ \\
\textsc{\large Dedicated to M. S. Raghunathan}}
\maketitle
\smallskip

Let $\Gamma$ be a finitely generated residually finite group.  Denote by $s_n(\Gamma)$ (resp. $t_n(\Gamma)$)
 the number of subgroups (resp. normal subgroups) of $\Gamma$ of index at most $n$.
In the last two decades  the study of the connection between the
algebraic structure of $\Gamma$ and the growth rate of the sequence $\{ s_n(\Gamma)\}^\infty_{n=1}$
has  become a very active area of research under the rubric ``subgroup growth" (see [L1], [LS] and the references
therein).  The subgroup growth rate of a finitely generated group is bounded above by
$e^{O(n\log n)}$, which is the growth rate for a finitely generated non-abelian free group.
On the other end of the spectrum, the groups with polynomial subgroup growth (PSG-groups for
 short), i.e., those satisfying $s_n(\Gamma)\le n^{O(1)}$, were characterized ([LMS]) as
 the virtually solvable groups of finite rank.  This was originally proved for linear
 groups ([LM]).  The linear case was then used to prove the theorem for general residually
 finite groups.

In recent years, interest has also developed in the {\it normal} subgroup growth
$\{t_n(\Gamma)\}^\infty_{n=1}$. In [L3] it was shown that the normal subgroup growth
of  a non-abelian free group is of type $n^{\log n}$, just a bit faster than polynomial
growth.  One cannot, therefore, expect that the condition on $\Gamma$ of being of
``polynomial normal subgroup growth" (PNSG, for short) will have
the same strong structural implications as that of polynomial subgroup growth.  In particular,
 PNSG-groups (unlike PSG-groups) need not be virtually solvable.  In fact, the examples
 produced in [S, Py]  (which, incidentally, show that essentially every rate of subgroup
 growth   between polynomial and factorial can occur) all have sublinear normal subgroup growth and are very far from being solvable.

For linear groups, however, the situation is quite different.

First fix some notations:  Let $F$ be an (algebraically closed) field and $\Gamma$
a finitely generated subgroup of $GL_n(F)$.  Let $G$
be the Zariski closure of $\Gamma$, $R(G)$  the solvable radical of $G$, and $  G^\circ$-the
 connected component of $G$.  Write $\ol G = G^\circ/R(G) $ and let $S(\Gamma) = \ol G/Z(\ol G)$ --
 ``the semisimple closure of $\Gamma$".
So $S(\Gamma) = \mathop{\Pi}\limits^r_{i=1} S_i$ where each $S_i$ is a simple algebraic group over $F$.
\begin{thma*} Assume $\Gamma$ is of polynomial normal subgroup growth,  and
$S(\Gamma) =\mathop{\Pi}\limits^r_{i=1} S_i$ as above.
Then either

{\rm (a)} $r=0$, in which case $\Gamma$ is virtually solvable, or

{\rm (b)} $r>0$ and for each $1 \le i \le r$, $S_i$ is a simple algebraic group of
type $G_2, F_4$  or $E_8$.
\end{thma*}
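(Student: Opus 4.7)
The plan is to prove the contrapositive of (b): if some simple factor $S_i$ is not of type $G_2$, $F_4$, or $E_8$, then $\Gamma$ fails to have PNSG. Case (a) is tautological from the setup, since $r=0$ forces $\ol G = 1$, hence $G^\circ = R(G)$ is solvable and the finite-index subgroup $\Gamma \cap G^\circ(F)$ is therefore solvable, making $\Gamma$ virtually solvable.

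For the main direction I would first reduce to a single simple factor. Since the normal subgroup growth of $\Gamma$ dominates the normal subgroup growth of any quotient, and $\Gamma$ projects to a Zariski-dense subgroup of each $S_i$, it suffices to treat one simple adjoint group $S = S_i$ outside the list $\{G_2, F_4, E_8\}$. The defining feature of that list is that these are precisely the simple types with trivial center (equivalently, simply connected equals adjoint); for any other type the central isogeny $\tilde S \to S$ has nontrivial kernel $Z = Z(\tilde S)$, and in most cases ($A_n$, $D_n$, $E_6$) the algebraic group $S$ also has nontrivial outer automorphism group.

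Next I would bring in arithmeticity. By the Nori--Weisfeiler strong approximation theorem (with Pink's analogue in positive characteristic), for all but finitely many primes $\mathfrak p$ of a suitable ring of definition the group $\Gamma$ surjects onto a finite group closely related to $\tilde S(\bbf_{\mathfrak p})$. The nontrivial center then gives, for each such $\mathfrak p$, a short chain of distinct normal subgroups of $\Gamma$ of comparable index --- the kernels of the composed maps $\Gamma \to \tilde S(\bbf_{\mathfrak p})/Z'$ for each central subgroup $Z' \subseteq Z \cap \tilde S(\bbf_{\mathfrak p})$ --- together with analogous chains at higher congruence levels. The strategy is to combine these choices across many primes and levels to manufacture many distinct normal subgroups of controlled index.

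The heart of the proof, and the main obstacle I anticipate, is turning this construction into a genuinely super-polynomial lower bound on $t_n(\Gamma)$. A naive independence argument across $k$ primes produces only $|Z|^k$ combinations subject to $k = O(\log n/\log\log n)$, which is still sub-polynomial in $n$; the true super-polynomial count must therefore come from a finer mechanism --- most plausibly an interaction between central twists, the congruence filtration, and, for types with nontrivial outer automorphism group, the existence of many Galois-style twisted quotients per prime. Crucially, this mechanism must vanish exactly when $S$ is $G_2$, $F_4$, or $E_8$, which is consistent with the joint vanishing of center and outer automorphism group singling out precisely those three types. Establishing the quantitative count, and verifying that the constructed subgroups are pairwise distinct inside $\Gamma$ rather than only inside some profinite completion (handling any congruence-kernel subtleties), is where I expect the technical weight of the proof to sit.
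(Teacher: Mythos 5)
There is a genuine gap, and it sits exactly where you placed the ``technical weight'': you never produce the super-polynomial counting mechanism, and the one you speculate about is not the right one. In the paper the bound comes from the center alone. After specializing the coefficient ring to a global field (this requires a nontrivial step you gloss over: Theorem 4.1, which finds a specialization $\phi\colon A\to k$ preserving the Zariski closure; Nori--Weisfeiler--Pink is then applied over the global field, not over the original finitely generated ring) one fixes a prime $q$ dividing $|\calz(\tilde S_i)|$ and restricts attention to the positive-density set of primes $P$ that split completely in a finite extension splitting the center, so each congruence quotient at $P$ has a central subgroup of order $q$. Taking all such primes of norm at most $x$ --- about $k\approx x/\log x$ of them --- the congruence quotient modulo their product contains a central elementary abelian $q$-group of rank $k$, and an elementary abelian group of rank $k$ has roughly $q^{k^2/4}$ subgroups; each is central, hence normal, and pulls back to a normal subgroup of index at most $c^x$. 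With $n=c^x$ this gives $n^{\Theta(\log n/(\log\log n)^2)}$, which is super-polynomial. So the quadratic gain comes from counting \emph{all subgroups of one large elementary abelian central section}, not from combining one kernel per prime (which, as you correctly note, gives only $|Z|^k$ and is sub-polynomial), and not from outer automorphisms or twisted quotients: your proposed mechanism cannot be the right one, since types $B_n$, $C_n$, $E_7$ have trivial outer automorphism group but nontrivial center and must still be excluded, and indeed the paper's argument never uses outer forms.

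Your reduction to a single simple factor is also flawed as stated. $\Gamma$ does not project onto a dense subgroup of $S_i$; only the finite-index subgroup $\Gamma\cap G^\circ(F)$ does, and for \emph{normal} subgroup growth a lower bound for a finite-index subgroup does not transfer to the ambient group (normality in the subgroup can be destroyed; the paper explicitly remarks after Proposition 1.2 that no such transfer is known, and this is precisely why it proves the local and global theorems for possibly non-connected groups). The paper repairs this in Proposition 3.2 and in Section 2 by checking that the central subgroups constructed at the split primes are normalized by the whole group: the principal congruence subgroups and the factors $\calg'(\calo_v)$ are preserved, the action on the centers $C_P$ factors through finitely many homomorphisms and is diagonal on a large subset of the primes, so enough of the $q^{k^2/4}$ subgroups remain normal in $\Gamma$ itself. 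Any correct write-up must either carry out this non-connected bookkeeping or find a substitute for it; your proposal, which quotients straight to one adjoint factor, skips it. (Your case (a) and the equivalence of the exceptional list with triviality of $\calz(\tilde S_i)$ are fine, and the positive-characteristic refinement $p\mid|\calz(\tilde S_i)|$, where the growth is even $n^{\log n}$ by a local argument, does not appear in your outline but is not needed for Theorem A itself.)
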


Theorem A  is best possible.  Indeed, we will see below (Theorem C) that for $S$-arithmetic
 subgroups of $G_2, F_4$ and $E_8$, the rate of growth of the normal congruence subgroups is polynomial.
 At least some of these arithmetic groups (and  conjecturally all -- see [PR])  satisfy the congruence subgroup property.
 So, they provide examples of Zariski dense subgroups of $G_2, F_4$ and $E_8$ with polynomial normal subgroup growth.
 It should be noted, however, that the normal subgroup growth rate is not determined by the Zariski closure.  Every simple algebraic group has
  a Zariski dense free subgroup and the normal growth of the latter is $n^{\log n}$.

Theorem A is surprising in two ways:  First, it shows that  linear groups are very
different from general residually finite groups;  the linear PNSG-groups are
``generically" virtually solvable. But even more surprising is the special role played
by $G_2, F_4$ and $E_8$.
This seems to be the first known case in which a growth
condition singles out individual simple algebraic groups from all the others.

 What is so special about $G_2, F_4 $ and $E_8$?  These are the only simple algebraic groups whose simply connected
and adjoint forms are the same, or in other words the only groups whose universal covers have trivial center.
Theorem A is therefore equivalent to:
\begin{thmaa*} Let $\Gamma \le GL_n(F)$  and $S(\Gamma) = \mathop{\Pi}\limits^r_{i=1} S_i$ as above.  Assume that for
 some $i, \; 1 \le i \le r, \; \calz(\tilde S_i) \neq 1$ (i.e., the
scheme-theoretic
  center of the simply connected cover of $S_i$
  is non-trivial).  Then $\Gamma $ is not a PNSG-group.
\end{thmaa*}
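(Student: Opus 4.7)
The strategy is to produce, inside a finite quotient of $\Gamma$ built from reductions at many primes, a large abelian normal subgroup whose many subgroups force super-polynomial normal subgroup growth. The hypothesis $\calz(\tilde S_i) \neq 1$ supplies the abelian structure: $G_2, F_4, E_8$ are precisely the simple types for which $\calz(\tilde S) = 1$, so no such abelian piece arises and the counting argument sharply fails in those cases.

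Since PNSG passes to quotients (a normal subgroup of a quotient pulls back to one of $\Gamma$ of equal index) and is a commensurability invariant, it suffices to exhibit one quotient of $\Gamma$ with non-polynomial normal subgroup growth, and we may replace $\Gamma$ by any finite-index subgroup. I would fix a finitely generated subring $R \subset F$ over which $\Gamma$ and $G$ are defined, so that each maximal ideal $\mathfrak{m} \subset R$ (with finite residue field $\bbf_q = R/\mathfrak{m}$) gives a reduction $\rho_{\mathfrak{m}}\colon \Gamma \to G(\bbf_q)$. By Nori--Weisfeiler strong approximation, for a density-one set of $\mathfrak{m}$ the image $\rho_{\mathfrak{m}}(\Gamma)$ contains, with uniformly bounded cokernel, the subgroup $\ol G(\bbf_q)^+$ generated by unipotents; this is the image of the simply connected cover $\tilde{\ol G}(\bbf_q) \to \ol G(\bbf_q)$, whose kernel is $\calz(\tilde{\ol G})(\bbf_q)$. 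For a positive-density subset of these primes, $\calz(\tilde S_i)(\bbf_q)$ has some fixed order $c \ge 2$, and a careful lift through the cover (or, when $\ol G = S(\Gamma)$ is adjoint, use of the ambient $G^\circ$ and its interaction with the solvable radical) yields in a suitable quotient of $\rho_{\mathfrak{m}}(\Gamma)$ a central subgroup of order $\ge c$ attached to each good prime.

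Combining $s$ such specializations via a joint (multi-prime) strong approximation, the image of $\Gamma$ in $\prod_j G(\bbf_{q_j})$ contains an abelian normal subgroup $A$ of order $\ge c^s$, containing an isomorphic copy of $(\bbz/c)^s$. The number of subgroups of $(\bbz/c)^s$---equivalently, the number of $\bbf_c$-subspaces of $\bbf_c^s$---is at least $c^{s^2/4}$ (counting just the $\lfloor s/2 \rfloor$-dimensional subspaces), and each, being central, is normal in the image and pulls back to a distinct finite-index normal subgroup of $\Gamma$. Each has index at most $N := \prod_j |G(\bbf_{q_j})|$; choosing the $q_j$ as small as possible, the prime number theorem yields $\log N = O(s \log s)$. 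Since $\log(c^{s^2/4}) = \Theta(s^2)$ grows faster than $d \cdot s \log s$ for any fixed $d$, we obtain $t_n(\Gamma) > n^d$ for arbitrarily large $n$ and every $d$, so $\Gamma$ is not PNSG.

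\textit{Main obstacle.} The delicate step is ensuring that $A$ really contains a copy of $(\bbz/c)^s$ in the image of $\Gamma$---that it does not collapse via diagonal identifications across primes; this requires a strengthened, multi-prime form of strong approximation beyond Nori's single-prime version. A further subtlety is the adjoint case: when $\ol G = S(\Gamma)$ has trivial center, the center of $\tilde S_i$ is invisible in $\ol G(\bbf_q)^+$, so one must lift through the cover $\tilde{\ol G} \to \ol G$ using the full $G^\circ$, or exploit cohomological contributions from $H^1(\bbf_q, \calz(\tilde S_i))$ outside $\ol G(\bbf_q)^+$, in order to expose the central structure as a genuine abelian quotient of some finite-index subgroup of $\Gamma$.
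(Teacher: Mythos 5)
Your counting skeleton is essentially the paper's: central subgroups of the simply connected reductions at many primes split in the splitting field of $\calz$, combined over $\approx \log n/\log\log n$ primes and counted via subspaces of $(\bbz/c)^s$, giving growth $n^{\log n/(\log\log n)^2}$. But your very first reduction — that PNSG is a commensurability invariant, so you "may replace $\Gamma$ by any finite-index subgroup" — is unjustified, and it is precisely the step the paper says it cannot make. Proposition 1.2 only gives $t_n(\Gamma)\le c_1n^{c_2}t_n(\Delta)$ for $\Delta$ of finite index, which is the useless direction here; the Remark following it states explicitly that no useful bound of $t_n(\Delta)$ in terms of $t_n(\Gamma)$ is known, so superpolynomial normal subgroup growth of a finite-index subgroup does not transfer upward to $\Gamma$. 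Your plan leans on this (your closing sentence aims to expose the center "as a genuine abelian quotient of some finite-index subgroup of $\Gamma$"). The paper instead never passes to a finite-index subgroup for a lower bound: it keeps the possibly non-connected group throughout, arranges $G/G^\circ$ to act transitively on the simple factors, and in Proposition 3.2 checks that the central factors $C_P$ being counted are normalized by the closure of \emph{all} of $\Gamma$ (the outer part acts diagonally on a large subset of the chosen primes), so the counted subgroups really are normal in $\Gamma$. Passing to quotients, which you also invoke, is fine and is what the paper does use.

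The second gap is the one you flag but do not resolve, and it is not a side issue: since $S_i$ is adjoint, $\calz(\tilde S_i)$ is invisible in $\ol G(\bbf_q)^+$, and the cosets measured by $H^1(\bbf_q,\calz(\tilde S_i))$ need not be met by $\rho_{\mathfrak{m}}(\Gamma)$ at all, so neither a central subgroup nor an abelian quotient is guaranteed in the image of $\Gamma$ at any prime. The paper's fix is simple but essential: over the algebraically closed field $F$ the isogeny $\widetilde{G^\circ}\rtimes Out(G^\circ)\to G^\circ\rtimes Out(G^\circ)$ is surjective on points, so $\Gamma$ can be pulled back to a finitely generated $\tilde\Gamma$ having $\Gamma$ as quotient by a finite normal subgroup, and Proposition 1.1 shows this does not change the normal subgroup growth type; all the subsequent work is then done for the simply connected form, where the reductions $\calg'(\calo/P)$ at split primes genuinely carry central subgroups of order $z$. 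Finally, your direct appeal to Nori--Weisfeiler over an arbitrary finitely generated domain does not cover positive characteristic and in any case must be preceded by a specialization to a global field preserving the Zariski closure — this is the content of the paper's Theorem 4.1, after which Pink's strong approximation is applied. Until the finite-index reduction is removed (or replaced by an argument that the constructed subgroups are normal in $\Gamma$ itself) and the lift through the isogeny is carried out, the existence of the $(\bbz/c)^s$ inside a quotient of $\Gamma$, with its subgroups $\Gamma$-invariant, is not established.
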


In fact, our result is much more precise:
\begin{thmb*}  Let $\Gamma \le GL_n(F)$ and $S(\Gamma) = \mathop{\Pi}\limits^r_{i = 1} S_i$ as above.
Denote the characteristic of $F$ by $p\ge 0$.  Then,

{\rm (i)} If for some $i$, \ $1 \le i \le r, \;  \calz(\tilde S_i)\neq 1$ then the normal subgroup growth rate of $\Gamma$
 is at least $n^{\log n/(\log\log n)^2}$.

 {\rm (ii)} If for some $i$, $1 \le i \le r, \; p \big| \; |\calz(\tilde S_i)|$ then the normal subgroup
 growth rate of $\Gamma$ is $n^{\log n}$.
 \end{thmb*}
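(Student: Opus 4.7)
The strategy is to produce many normal subgroups of $\Gamma$ by constructing quotients of $\Gamma$ whose centers contain (a quotient of) the scheme-theoretic center $\calz := \calz(\tilde S_i)$. We may reduce to the case $r=1$ by projecting to the $i$-th factor; after passing to a finite-index subgroup we may assume $\Gamma$ is Zariski dense in a simple algebraic group $\bar S = S_1$ with $\calz \neq 1$. Spreading out, fix a finitely generated subring $\calo \subseteq F$ such that $\Gamma \le GL_n(\calo)$ and $\bar S$ extends to a smooth reductive model $\bar{\mathcal S}$ over $\calo$ (after inverting finitely many primes of $\calo$ if needed); the simply connected isogeny $\tilde{\mathcal S} \to \bar{\mathcal S}$ then has kernel a non-trivial finite flat group scheme $\calz$ over $\calo$.

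For (i), Nori--Weisfeiler strong approximation (extended to positive characteristic by Pink) implies that for all but finitely many maximal ideals $\mathfrak{p}$ of $\calo$ the image of $\Gamma$ in $\bar{\mathcal S}(k_\mathfrak{p})$ has bounded index, while a density argument gives $|\calz(k_\mathfrak{p})| \ge 2$ for a positive density of $\mathfrak{p}$. For each such $\mathfrak{p}$ the surjection of $\Gamma$ onto the essentially-perfect image lifts, via the Schur multiplier identification for finite simple groups of Lie type, to a surjection onto a central extension by $\calz(k_\mathfrak{p})$. Combining $k$ such primes diagonally gives a quotient $Q$ of $\Gamma$ with central subgroup $A = \prod_{j=1}^k \calz(k_{\mathfrak{p}_j})$; each subgroup of $A$ pulls back to a distinct normal subgroup of $\Gamma$ of index $\le |Q| \le \prod_j |\tilde{\mathcal S}(k_{\mathfrak{p}_j})|$. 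Since $A$ has rank $k$ and bounded-from-below exponent, it has $\exp(\Omega(k^2))$ subgroups. Taking residue field sizes up to $x$ gives $k \sim x/\log x$ primes (prime number theorem) and $|Q| \le x^{(\dim \tilde S)k}$; balancing $|Q| = N$ forces $k \sim \log N / \log\log N$, producing $\exp(\Omega((\log N/\log\log N)^2)) = N^{\Omega(\log N/(\log\log N)^2)}$ normal subgroups of index $\le N$.

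For (ii), $p \mid |\calz|$ implies that $\calz$ contains an infinitesimal subgroup scheme ($\mu_p$ or $\alpha_p$) over the characteristic-$p$ base, so reducing modulo successive powers $\mathfrak{p}^m$ of a prime $\mathfrak{p}$ of residue characteristic $p$ produces central parts whose $\bbf_p$-rank grows linearly with $m$: one checks $|\calz(\calo/\mathfrak{p}^m)|$ grows like $q^{cm}$ for a constant $c>0$ while $|\tilde{\mathcal S}(\calo/\mathfrak{p}^m)| \le q^{(\dim \tilde S)m}$. Realizing such a truncated central extension as a quotient of $\Gamma$ (by the analogous congruence-plus-lift argument extended to higher levels) gives a quotient of size $q^{O(m)}$ with central part of rank $\Omega(m)$; choosing $m \sim \log N$ yields $2^{\Omega((\log N)^2)} = N^{\Omega(\log N)}$ normal subgroups of index $\le N$, as required. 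The principal obstacle in both parts is the lifting step: showing that the surjection of $\Gamma$ onto the adjoint-form finite quotient furnished by strong approximation actually extends to a surjection onto the central extension, i.e.\ that the class in $H^2(-, \calz)$ defining the extension pulls back non-trivially under $\Gamma \to \bar{\mathcal S}(k_\mathfrak{p})$. For (i) this reduces to standard Schur-multiplier computations for finite simple groups of Lie type, while for (ii) it requires a more delicate flat / Artin--Schreier--Witt cohomology analysis combined uniformly across primes and congruence levels.
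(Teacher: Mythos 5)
Your counting arguments in both parts do match the paper in shape: central factors over $k\sim\log N/\log\log N$ primes yield $\exp(\Omega(k^2))$ normal subgroups of index $\le N$ in (i), and a single congruence tower with center of $\bbf_p$-rank $\sim\log N$ yields $N^{\Omega(\log N)}$ in (ii). However, there are three substantial gaps, the first of which is fatal as written.

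\emph{Passing to a finite-index subgroup.} You open by reducing to $r=1$ and connectedness ``after passing to a finite-index subgroup.'' For \emph{normal} subgroup growth this step is not available. Unlike subgroup growth, there is no useful upper bound on $t_n(\Delta)$ in terms of $t_n(\Gamma)$ when $\Delta$ is a finite-index subgroup of $\Gamma$ (the paper states exactly this in the Remark after Proposition 1.2), so a lower bound on $t_n(\Delta)$ does not transfer to $\Gamma$. This is precisely the ``subtle difficulty'' flagged in the introduction, and is why the paper carries the non-connected case --- $G^\circ\rtimes\mathrm{Out}(G^\circ)$, transitivity of $G/G^\circ$ on the factors, Pink's quasi-models, etc.\ --- through Sections 2, 3 and 5. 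What \emph{is} legitimate is replacing $\Gamma$ by a quotient: lower bounds on $t_n$ do pass up from quotients. The paper therefore quotients $G$ so that $G^\circ=\prod S_i$ (adjoint), but never discards the outer part or isolates a single factor.

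\emph{The lifting step.} You identify the lifting problem as the ``principal obstacle'' and propose to solve it via Schur-multiplier/$H^2$ computations, asking that the extension class ``pulls back non-trivially'' under $\Gamma\to\bar{\mathcal S}(k_{\mathfrak p})$. This is not the right condition --- a surjection $\Gamma\to Q$ lifts \emph{as a homomorphism} to a central extension $E\to Q$ with kernel $A$ iff the pulled-back class in $H^2(\Gamma,A)$ \emph{vanishes}, and even then the lift $\Gamma\to E$ need not be surjective (its image can miss part of $A$). There is in general no reason for either condition to hold, and verifying it prime by prime would require genuinely new input. The paper sidesteps this entirely: it replaces $\Gamma$ once, at the level of algebraic groups, by its preimage $\tilde\Gamma\le\widetilde{G^\circ}\rtimes\mathrm{Out}(G^\circ)$ under the universal cover. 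The kernel of $\tilde\Gamma\to\Gamma$ is finite, so by Proposition 1.1 the two groups have identical normal subgroup growth type, and now the congruence quotients of $\tilde\Gamma$ (via strong approximation in the simply connected group) come \emph{equipped} with the desired center --- no cohomological lifting is needed. Your fiber-product $\Gamma\times_Q E$ is in fact the local shadow of this construction, but without invoking Proposition 1.1 you cannot conclude anything about $\Gamma$ from it.

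\emph{Skipping the global-field specialization.} You spread $\Gamma$ out over a finitely generated domain $\calo$ and reduce modulo maximal ideals, invoking a positive-density/Chebotarev argument directly. The paper instead first specializes (Theorem 4.1) to a global field while preserving the Zariski closure, precisely so that the residue-field norms live on a one-dimensional base where the prime number theorem and Chebotarev give clean asymptotics, and so that Pink's strong approximation [P2] applies in its global-field form. Over a base of Krull dimension $>1$ the density statement you assert requires a separate argument (e.g.\ a curve quasi-section or Lang--Weil), and the strong approximation statement you cite is not in the form needed. This gap is repairable but is not free, and the paper's Theorem 4.1 is doing real work that your sketch omits.

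Finally, a minor point: for (ii) the paper does not need the global machinery at all --- once in $GL_n(\calo_S)$ for a global field $k$, a single non-archimedean completion and the local Theorem 2.1 (via Lemma 2.3) already give $t_n\succeq n^{\log n}$, which by Proposition 1.6 is also the upper bound. Your version of (ii) is sound in spirit but inherits the finite-index and lifting problems above.
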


It is easy to see that Theorem B implies A(and A'), so we will aim at proving the former.  To this end we will
 prove the following result which may be of independent interest:

\begin{thmf*}  Let $A$ be an integral domain, finitely generated over the prime field of characteristic $p\ge 0$, with fraction field $K$.
Let $\Gamma$ be a finitely generated  subgroup of $GL_n(A)$ whose Zariski closure $G$ in $GL_n(K)$ is connected
 and absolutely simple.  Then there exists a global field $k$ and a ring homomorphism $\phi \colon A\to k$, such that the Zariski closure of $\phi(\Gamma)
 $ in $GL_n(k)$ is isomorphic to $G$ over some common field extension of $K$ and $k$.
\end{thmf*}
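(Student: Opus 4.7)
The plan is a spreading-out-and-specialization argument: extend $G$ and $\Gamma$ to a smooth group scheme over a dense open subscheme of $\mathrm{Spec}(A)$, show that Zariski density of $\Gamma$ is preserved over a nonempty Zariski open subset of the base, and finally exhibit a point of that open subset whose residue field is a global field. I would first spread out: since $A$ is finitely generated and $G$ is smooth, connected, and absolutely simple, standard limit results give a nonzero $f\in A$ and a smooth closed subgroup scheme $\mathcal{G} \subseteq GL_{n, A[1/f]}$ with $\mathcal{G}_K = G$, whose geometric fibers are all connected absolutely simple groups isomorphic to $G_{\ol K}$; enlarging $f$ further, the finitely many generators of $\Gamma$ lie in $\mathcal{G}(A[1/f])$, so that $\Gamma \subseteq \mathcal{G}(A[1/f])$.

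Next, I would show that the locus $U \subseteq \mathrm{Spec}(A[1/f])$ on which $\Gamma$ specializes to a Zariski-dense subgroup of $\mathcal{G}_s$ contains a nonempty open subset. Writing $\gamma_1,\ldots,\gamma_m$ for the chosen generators of $\Gamma$, one wants the smallest closed subgroup of $\mathcal{G}_s$ containing all $\gamma_i(s)$ to equal $\mathcal{G}_s$ for generic $s$. By Noetherianity of the Zariski topology on $G$, the ascending chain of closed subgroups generated by longer and longer words in the $\gamma_i$ stabilises already on a finite level over the generic fibre, so that "being contained in a proper closed subgroup" can be tested by finitely many polynomial relations on the $\gamma_i(s)$; combined with Chevalley's theorem on constructibility of images and the classification-theoretic finiteness of maximal closed subgroup types for a simple algebraic group (Borel--Tits in characteristic zero; Liebeck--Seitz and Seitz in positive characteristic), this exhibits $U$ as the complement of a finite union of closed subsets. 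Since $U$ contains the generic point by hypothesis, it contains a nonempty Zariski open subset.

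Finally, I would find a point of $U$ whose residue field is a global field. In characteristic zero, the generic fiber of $\mathrm{Spec}(A[1/f]) \to \mathrm{Spec}(\bbz)$ is a nonempty finite-type $\bbq$-variety; its closed points are dense in any nonempty open subset and have number-field residue fields, so any closed point lying in $U$ provides the desired $\phi\colon A \to k$. In positive characteristic $p$, if $\dim A = 1$ then $K$ itself is a global field and the inclusion $A \hookrightarrow K$ suffices; otherwise, Noether normalization presents $A[1/f]$ as finite over $\bbf_p[x_1,\ldots,x_d]$, and an iterative Bertini-style specialization of $x_1,\ldots,x_{d-1}$ to generic elements of $\ol{\bbf}_p$, chosen at each stage to remain inside $U$ and to preserve integrality, produces a quotient whose fraction field is a finite extension of $\bbf_q(x_d)$ for some $q = p^e$, that is, a global field. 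Because the spread-out choice of $\mathcal{G}$ makes every geometric fiber isomorphic to $G_{\ol K}$, the Zariski closure of $\phi(\Gamma)$ in $GL_n(k)$, which equals $\mathcal{G}_s$ by the density step, is isomorphic to $G$ over any common algebraically closed extension of $K$ and $k$.

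The main obstacle is the constructibility claim in the middle paragraph: unlike conditions involving a single polynomial, "no proper closed subgroup of $\mathcal{G}_s$ contains the $\gamma_i(s)$" requires control over the entire lattice of closed subgroup schemes of $\mathcal{G}_s$, which is subtler in positive characteristic where Lie-algebra irreducibility of $\Gamma$ may fail. Its resolution rests on combining the Noetherian reduction to a finite test family of subgroups with the finite classification of maximal closed subgroup types for a simple algebraic group; these together reduce the genericity of Zariski density to a Chevalley-constructibility statement that is automatic at the generic point.
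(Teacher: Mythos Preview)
Your overall architecture matches the paper's: spread out to a simple group scheme, use Liebeck--Seitz finiteness plus Chevalley constructibility to control the dense locus, then find a global-field point. But the middle step has a genuine gap in positive characteristic.

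The Liebeck--Seitz theorems you invoke classify the \emph{positive-dimensional} maximal closed subgroups of a simple algebraic group; they say nothing about finite maximal subgroups, and in characteristic $p>0$ these cannot be ignored. Over $\overline{\mathbb{F}}_p$ the groups $G(\mathbb{F}_{q})\subset G(\mathbb{F}_{q^2})\subset\cdots$ form an infinite ascending chain of finite closed subgroups whose union is all of $G$, and for $q$ large each $G(\mathbb{F}_q)$ is Lie primitive (not contained in any proper positive-dimensional subgroup). Thus there is no finite list of maximal closed subgroups through which every proper closed subgroup factors, and your constructibility argument for the set $U=\{s:\phi_s(\Gamma)\text{ is Zariski dense in }\mathcal{G}_s\}$ breaks down: the condition ``$\phi_s(\Gamma)$ lies in some proper closed subgroup'' is not reducible to containment in finitely many test families. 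The Noetherian stabilisation of word-closures over the generic fibre does not help either, since the fibre of the schematic closure of a finite set of sections can strictly contain the Zariski closure of their specialisations.

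The paper confronts exactly this obstacle and splits the problem in two. Its Lemma~4.4 uses only the positive-dimensional Liebeck--Seitz list to produce an open set on which the specialised closure is \emph{either finite or all of} $\mathcal{G}_s$; this weaker dichotomy is what Chevalley constructibility actually delivers. The finite alternative is then eliminated by a separate trace argument: fix an absolutely irreducible almost faithful representation $\rho$ of $G$, and use that some $\chi(\gamma)=\mathrm{tr}\,\rho(\gamma)$ is nonconstant on $\mathrm{Spec}(A)$. In characteristic~$0$ one specialises so that $\phi(\chi(\gamma))$ is a large rational integer, which cannot be a sum of $m$ roots of unity, forcing $\phi(\gamma)$ to have infinite order. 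In characteristic~$p$ one takes a curve (a quasi-section for the dominant map $\chi(\gamma)\colon\mathrm{Spec}(A)\to\mathbb{A}^1$) and uses its function field as $k$; nonconstancy of $\phi(\chi(\gamma))$ again forces $\phi(\Gamma)$ to be infinite. Either way $\phi(\Gamma)$ is infinite on the open set of Lemma~4.4, hence Zariski dense. Your Step~3 is close in spirit to this last manoeuvre, but it only works once you have the correct ``finite or dense'' open set rather than the stronger ``dense'' open set you claimed.
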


Theorem 4.1 will enable us to reduce the proof of Theorem B to the case when $\Gamma$ sits within $GL_n(k)$,
 where $k$ is a global field.  Being finitely generated, it is even contained in an $S$-arithmetic group.  The Strong Approximation Theorem for
 linear groups (in the strong version of Pink [P2]) then connects the estimate of $t_n(\Gamma)$ to the counting of  normal
 congruence subgroups in an  $S$-arithmetic group.  Here we can prove the following precise result.
\begin{thmc*} Let $k$ be a global field of characteristic $p\ge 0$, $S$ a non-empty set of valuations of $k$
containing all the archimedean  ones, and $\calo_S=\{x\in k|v(x) \ge 0, \forall v \notin S\}$.  Let
$\calg$ be a smooth group scheme over $\calo_S$ whose generic fiber $\calg_\eta$ is connected and simple .
Let $\tilde \calg_\eta$ be the universal cover of $\calg_\eta$.  Let $\Delta$ be the $S$-arithmetic group $\Delta =
\calg(\calo_S)$.  Assume $\Delta$ is an infinite group.
Let $D_n(\Delta)$ be the number of  normal congruence subgroups of index at most $n$
in $\Delta$.  Then the growth type of $D_n(\Delta)$ is:
%
%

\bigskip

{\rm(i)} \  $n$ if $\calg$ is of type $G_2, F_4$ or $E_8$.

\bigskip

 {\rm(ii)} \ $n^{\log n/(\log\log n)^2}$ if $\calz(\tilde \calg_\eta)\neq 1$ and $p\nmid|\calz(\tilde \calg_\eta)|$
\bigskip

{\rm(iii)} \ $n^{\log n} $ if $p\Big| |\calz (\tilde \calg_\eta)|$

\end{thmc*}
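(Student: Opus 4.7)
The plan is to reduce, via Pink's strong approximation theorem [P2], the counting of normal congruence subgroups of $\Delta$ to a combinatorial problem about normal subgroups of finite products of groups of the form $\tilde\calg(\calo_\mathfrak{p}/\mathfrak{p}^k)$, and then to carry out the count using the classical description of the normal subgroup lattice of each factor.

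The first step is to apply Pink's theorem to the universal cover $\tilde\calg_\eta$, obtaining a uniform bound $M=M(\Delta)$ such that, for every finite set $T$ of primes of $\calo_S$ and every choice of positive integers $k_\mathfrak{p}$, the image of $\Delta$ in $\prod_{\mathfrak{p}\in T}\tilde\calg(\calo_\mathfrak{p}/\mathfrak{p}^{k_\mathfrak{p}})$ has index at most $M$. Consequently, up to a bounded error in the index, normal congruence subgroups of $\Delta$ of index at most $n$ correspond to normal subgroups of such finite products of index at most $n$. In a single factor $G_k=\tilde\calg(\calo_\mathfrak{p}/\mathfrak{p}^k)$, the congruence filtration $G_k\supset G_k^{(1)}\supset\cdots\supset G_k^{(k)}=\{1\}$ has quotients $G_k^{(j)}/G_k^{(j+1)}\cong\mathfrak{g}(\mathbb{F}_{q_\mathfrak{p}})$ for $1\le j<k$ that are simple $\tilde\calg(\mathbb{F}_{q_\mathfrak{p}})$-modules for all but finitely many primes (excluding bad primes for the root system), and at level zero $\tilde\calg(\mathbb{F}_{q_\mathfrak{p}})$ is a perfect central extension of a finite simple group of Lie type whose only normal subgroups lie between $\{1\}$ and the center $\calz(\tilde\calg)(\mathbb{F}_{q_\mathfrak{p}})$. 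Thus each normal subgroup of $G_k$ is specified by a level $j\in\{0,1,\ldots,k\}$ and, if $j=0$, a subgroup of $\calz(\tilde\calg)(\mathbb{F}_{q_\mathfrak{p}})$; normal subgroups of a product are products of such local data.

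In case (i), where $\calz(\tilde\calg)=1$, every factor $G_k$ has only the chain of $k+1$ normal subgroups, and a normal subgroup of the product corresponds to an ideal of $\calo_S$. The number of such ideals of absolute norm at most $n$ is $\Theta(n)$, from the simple pole of $\zeta_{\calo_S}$ at $s=1$, giving growth type $n$. In case (ii), with $m=|\calz(\tilde\calg_\eta)|$ coprime to $p$, the strategy is to apply Chebotarev density to the extension $k(\mu_m)/k$ to extract a positive-density set $\Pi$ of primes at which $\calz(\mathbb{F}_{q_\mathfrak{p}})$ contains a copy of $\mathbb{Z}/m\mathbb{Z}$. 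Taking the $t$ primes of smallest norm in $\Pi$, the product of centers contains $(\mathbb{Z}/m\mathbb{Z})^t$, which (in the prototypical case of $m$ prime) has on the order of $m^{t^2/4}$ subgroups by the Gaussian binomial asymptotics; each yields a distinct normal congruence subgroup of $\Delta$ of index at most a constant times $\prod_{\mathfrak{p}\in\Pi_t}q_\mathfrak{p}^{\dim\calg}$. By the prime number theorem, the constraint that this index be at most $n$ forces $t\asymp \log n/\log\log n$, producing the lower bound $n^{\log n/(\log\log n)^2}$.

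Case (iii), where $p\mid|\calz(\tilde\calg_\eta)|$, is handled analogously but using higher congruence levels: the infinitesimal part of the center contributes, at level $\mathfrak{p}^k$, an abelian $p$-group whose rank grows with $k$, yielding many more normal subgroups per prime, and an analogous optimization produces growth $n^{\log n}$, matching the non-abelian free group bound of [L3]. The main obstacle will be the matching upper bound in case (ii): one must show that every normal subgroup of $\Delta$ of index at most $n$ is captured by the pair (congruence ideal, subgroup of the product of centers), with no exotic normal subgroups arising from mixing across primes or across levels. This rests on the uniform simplicity of the positive-level Lie-algebra modules, on the absence of non-central abelian quotients of $\tilde\calg(\mathbb{F}_{q_\mathfrak{p}})$, and on a precise Gaussian binomial count of subspaces of $(\mathbb{Z}/m\mathbb{Z})^t$ subject to the index constraint, combined with the joint enumeration over both the ideal and the subspace.
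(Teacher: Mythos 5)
Your lower bounds are in the spirit of the paper's (ideal counting for (i), Chebotarev plus the prime number theorem for (ii)), and the big picture of reducing via strong approximation to a product over primes is sound. The trouble is the upper bound. You assert that ``normal subgroups of a product are products of such local data'', and this is false; in fact its failure is the entire point of case (ii). Whenever the local quotients $\calg(\calo_v/\mathfrak m_v^{k_v})$ have nontrivial centers -- as they do precisely when $\calz(\tilde\calg_\eta)\ne 1$ -- a product $A\times B$ has many normal subgroups that are not products: the graph of any isomorphism $A_2/A_1\to B_2/B_1$ between central quotients gives one (Lemma~1.3 of the paper). You later flag this as ``the main obstacle'', but you do not supply a mechanism for bounding these graphs. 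The paper's Proposition~1.5 does exactly this, giving $t_n(A\times B)\le t_n(A)^2\,t_n(B)^2\, z_n(A)^{\delta_n(A)}$, and the quantities $z_n$ and $\delta_n$ are then controlled by the observation that an ideal $I$ of $\calo_S$ of index at most $n$ has $O(\log n/\log\log n)$ prime-power factors, so the central quotient $Z_1(I)/Q_1(I)$ has order $z^{O(\log n/\log\log n)}$ on $O(\log n/\log\log n)$ generators. That is precisely where the exponent $(\log n/\log\log n)^2$ comes from; without such a bound you have only the lower bound in (ii).

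A second gap sits inside case (i). You assume the graded pieces of the congruence filtration are simple modules, so each local factor contributes only the chain of principal congruence subgroups and a normal subgroup of the product corresponds to an ideal of $\calo_S$. This fails for the Ree-type characteristics: $G_2$ in characteristic $3$ and $F_4$ in characteristic $2$ have reducible adjoint representations (Proposition~1.8), and Lemma~2.4 of the paper shows that a \emph{single} local factor then already has polynomially many (not logarithmically many) open normal subgroups, parametrized by Lie-subalgebra lifts of the unique proper ideal $I\subset\ol L$. The ideal bijection therefore breaks down, and one must show separately -- as the paper does with Proposition~2.6(iii), using that hyperspecial Ree-type quotients are centerless so that Corollary~1.4 applies -- that the global count nonetheless remains polynomial. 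Your proposal does not address this case.
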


Note that by $|Z(\tilde \calg_\eta)|$ we mean the order of the group scheme which is the center of $\tilde G_\eta$.
This is an invariant which depends only on the root system:  If $p\neq 2, 3, \; p \big| |\calz(\tilde \calg_\eta)|$
 if and only if $\calg$ is of type $A_n$ and $p| (n+1)$.

\bigskip

It is of interest to compare $D_n(\Delta)$ to $C_n(\Delta)$ when $C_n(\Delta)$ counts the number of
all congruence subgroups of $\Delta$ of index at most $n$.
The following table summarizes the situation.

\bigskip

\noindent

\begin{center}
\begin{tabular}{  | c | l | l | }
\hline

&&\\
 & $\qquad p=0$ & $\qquad p>0$  \\
 &&\\ \hline \hline
 & & \\
 &$C\colon \; \;  n^{\log n/\log\log n}\; \; \;[L2, GLP]$ & $C\colon\; \;  n^{\log n} \qquad\qquad \; \;  \! [N]$ \\
$G_2, F_4, E_8$ & & \\
$ $ & $D\colon\; \;  n\qquad \qquad \quad \; \; [LL]$ &$D\colon\; \;  n \qquad \qquad \quad \; \; \; [LL]$ \\
& & \\ \hline
&& \\
 $\calz(\tilde \calg_\eta)\neq 1$ & $C\colon\; \;  n^{\log n/\log\log n}\; \; \;\,  [L2, GLP]$ &
 $C\colon \; \; n^{\log n} \qquad\qquad \quad \!\![N]$\\
&& \\
and $p\nmid|Z(\tilde \calg_\eta)|$ &$D\colon \; \; n^{\log n/(\log\log n)^2} \;\, [LL]$ & $D\colon \; \; n^{\log n/(\log\log n)^2} \; \; \, [LL]$\\
&& \\ \hline
&& \\
 & & $C\colon \; \; n^{\log n} \qquad\qquad \quad \![N] $ \\
$p\big| |\calz(\tilde \calg_\eta)|$ & cannot occur & \\
 & & $D\colon\; \;  n^{\log n}\qquad \qquad \; \; [LL]$\\
 && \\ \hline

\end{tabular}
\end{center}

\bigskip

We only remark, that as of now the results of $[N]$ require the assumption that $\calg$ splits.
[LL] refers to the current paper.
\bigskip

The proof of Theorem C depends on a careful analysis of the corresponding problem over local fields.
Here we have:
\begin{thmd*} Let $k$ be a non-archimedean local field of characteristic $p \ge 0$, and $\calo$
its valuation ring.  Let $\calg$ be a smooth group scheme over $\calo$ whose generic fiber $\calg_\eta$
 is connected, simply connected and simple.  Let $\Delta = \calg(\calo)$ and $t_n(\Delta)$ the number
  of open normal subgroups of $\Delta$ of index at most $n$.  Then the growth type of $t_n (\Delta)$
  is:
  \begin{enumerate}
  \item[\rm (i)] \ $n^{\log n} $ if $ p | \Big| \calz (\calg_\eta)\Big|$
  \item[\rm (ii)]  \ $n$ if $\calg$ is of Ree type, i.e., either $p = 2$ and $\calg$ is
  of type $F_4$ or $p=3$ and $\calg$ is of type $G_2$.
  \item[\rm (iii)] \ $\log n$ otherwise.
  \end{enumerate}
  \end{thmd*}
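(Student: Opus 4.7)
The plan is to filter $\Delta$ by the principal congruence subgroups $\Delta(i) = \ker\bigl(\calg(\calo) \to \calg(\calo/\pi^i)\bigr)$. Every open normal subgroup $N$ of $\Delta$ contains some $\Delta(k)$, so it suffices to count normal subgroups of the finite quotients $\Delta/\Delta(k)$. For $i \geq 1$, the quotient $\Delta(i)/\Delta(i+1)$ is canonically isomorphic as an abelian group to $\mathfrak{g}(\bbf_q)$, where $\mathfrak{g}$ is the Lie algebra of $\calg$ and $\bbf_q$ is the residue field; the conjugation action of $\Delta$ on this graded piece factors through the adjoint action of $\calg(\bbf_q)$. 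The problem thus decouples, to first approximation, into counting normal subgroups of the top piece $\calg(\bbf_q)$ and, at each subsequent level, counting $\calg(\bbf_q)$-invariant $\bbf_q$-subspaces of $\mathfrak{g}(\bbf_q)$ compatibly across levels.

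For case (iii), the generic situation: outside the Ree exceptions and the case $p \mid |\calz(\calg_\eta)|$, the Lie algebra $\mathfrak{g}$ is simple as a $\calg(\bbf_q)$-module, so the only invariant subspaces are $\{0\}$ and $\mathfrak{g}$. Combined with the fact that $\calg(\bbf_q)$ is quasisimple (with only boundedly many normal subgroups), this forces every open normal subgroup of $\Delta$ to coincide with some $\Delta(k)$ up to bounded data. Since $[\Delta : \Delta(k)]$ grows like $q^{k \dim \calg}$, the count of normal subgroups of index at most $n$ is then $\Theta(\log n)$, with the matching lower bound coming from the chain $\{\Delta(k)\}$ itself.

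For case (ii), the Ree cases ($F_4$ in characteristic $2$, $G_2$ in characteristic $3$): the special isogeny $\sigma : \calg \to \calg$ with $\sigma^2 = \mathrm{Frobenius}$ provides an additional $\calg(\calo)$-stable subgroup between $\Delta(k)$ and $\Delta(k+1)$ at each level, corresponding to the long-root part of $\mathfrak{g}$; this refines the congruence filtration to one indexed by half-integers and realizes the normal subgroup lattice as a single chain whose indices form a geometric progression of ratio $q^{\dim \calg/2}$ together with the usual $q^{\dim \calg}$ steps. Counting such indices up to $n$ yields linear growth. For case (i), $p \mid |\calz(\calg_\eta)|$: here $\mathfrak{g}$ is non-simple (it has nontrivial center, and further submodules arise from iterated centralizers/derived ideals in bad characteristic), producing a large lattice of invariant subspaces. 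The upper bound $n^{\log n}$ follows from the general bound $t_n(\Delta) \le n^{O(\log n)}$ for compact $p$-adic analytic groups. The lower bound comes from identifying, inside a suitable $\Delta(k)$, a non-abelian free pro-$p$ quotient (equivalently, a free-like filtered Lie algebra over $\calo$ built from the central ideal at each level together with a non-central complementary direction), whose normal subgroup growth is classically $n^{\log n}$.

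The main obstacle is case (i): producing a matching lower bound $n^{\log n}$ requires genuine construction of a non-abelian free pro-$p$ subquotient, and this must exploit the $p$-torsion in $\calz(\calg_\eta)$ to produce two independent ``free'' directions in the associated graded Lie algebra, rather than only the single central tower one gets naively. A subsidiary difficulty in case (ii) is verifying that the Ree isogeny contributes exactly one new invariant subspace per level (so that the count is genuinely linear and not polynomial of higher degree), and linking the abstract subgroup lattice to the half-integer filtration cleanly. Both cases depend on a precise classification of $\calg$-submodules of $\mathfrak{g}$ in small characteristic, which is the key modular-representation-theoretic input underlying the trichotomy.
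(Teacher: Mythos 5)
Your overall framework (filter by principal congruence subgroups $\Delta(i)$, pass to the graded Lie algebra, and reduce to $\calg(\bbf_q)$-submodule structure of $\mathfrak g$) matches the paper's strategy, and your treatment of case (iii) is close in spirit to what is actually done there (the paper uses iterated Lie brackets with $Q_m$ to show any normal $N$ is trapped between $Q_n$ and $Q_{n-c'}$ for a bounded $c'$). The problems are in the two exceptional cases, where your proposed mechanisms are not merely incomplete but would give the wrong answer.

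In case (ii), you propose that the Ree isogeny produces a refined filtration by half-integers and that the normal subgroups form ``a single chain whose indices form a geometric progression.'' But a single chain with geometrically growing indices has only $O(\log n)$ terms of index at most $n$ --- that is logarithmic growth, not linear. Linear growth $t_n \sim n$ requires \emph{polynomially many} normal subgroups of index at most $n$, so there must be exponentially many normal subgroups occurring in windows of the congruence filtration of length $O(d)$. The paper gets these from the isomorphism $\psi:(\call\otimes k)/I\to I$ between the unique proper ideal of the Ree Lie algebra and its quotient, and then constructs $\call$-submodules
$N_{d,q}=\{\pi^{2d}\psi(\alpha)+\pi^{3d}q(\pi)\alpha\}$
inside $\pi^{2d}\call/\pi^{4d}\call$ parameterized by polynomials $q$ of degree $<d$ over the constant field. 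Pulling back through the logarithm, these give $|\bbf|^d$ distinct open normal subgroups of index $\le c^d$, hence linear growth. Your ``one new invariant subspace per level'' worry is pointed at the wrong risk: one extra subspace per level still gives only a chain, not linear growth, so the entire mechanism needs replacement.

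In case (i), the ``non-abelian free pro-$p$ subquotient'' does not exist and is not the source of the $n^{\log n}$ lower bound. A compact $p$-adic analytic group in characteristic zero, or a compact subgroup of $\calg(\calo)$ here, does not contain a free pro-$p$ group as a closed subquotient in the way you suggest, and the growth of free pro-$p$ groups in [L3] relies on entirely different structure. What actually produces the $n^{\log n}$ lower bound is the fact that when $p\mid|\calz(\calg_\eta)|$ the scheme-theoretic center $\calz$ is a non-reduced (infinitesimal or mixed) finite flat group scheme. The paper's Lemma 2.3 exploits the nilpotents in the coordinate ring $B$ of the closure of $\calz$ in $\calg$ to show that $\mathrm{Hom}_{\mathrm{ring}}(B,\calo/\pi^{2\gamma n}\calo)$ contains an elementary abelian central subgroup of rank $\gtrsim n$ inside $\Delta/Q_{2\gamma n}$. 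A central elementary abelian $p$-group of rank $\sim n$ in a quotient of index $\sim c^n$ yields $\gtrsim p^{n^2/4}$ normal subgroups of index $\le c^n$, i.e., $n^{\log n}$ growth. This is a congruence-theoretic, not free-group, phenomenon; your Example $\mathrm{SL}_d(\bbf_p[[t]])$ with $p\mid d$ already shows the right picture --- the scalar matrices $(1+y)I_d$ with $y\in(t^r)/(t^{pr})$ form exactly such a large elementary abelian center. The upper bound $n^{\log n}$ for case (i) then comes for free from the general bound for finitely generated groups (Proposition 1.6), which you correctly invoked.
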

\bigskip

Theorem D is somewhat surprising in its own right; the groups of Ree type which play a special role appear
 here for an entirely different reason than $G_2, F_4 $ and $E_8$ appear in Theorem C.
In fact, $G_2, F_4$ and $E_8$ appear in Theorem C because their (schematic) center is trivial, while
 the groups of Ree type appear as exceptions because their
adjoint representations are reducible.  In all other cases of reducibility, $p$ divides the order of the center;
in particular, the groups of Suzuki type are of type (i).

 It is also of interest here to compare the growth of $t_n(\Delta)$ to $s_n(\Delta)$,
 the number of all open subgroups of index at most $n$.  For $s_n(\Delta)$  the result is  simple:
$s_n(\Delta)$  grows polynomially if $p=0$ [LM] and as fast as $n^{\log n}$ if $p > 0$ (see [LSh]).

  The paper is organized as follows:  In \S1, we collect some general results on counting normal
  subgroups and other preliminaries.  Special attention is called to Proposition 1.5 which
   seems to be new and useful.  In \S2, we treat the local case and prove a stronger version of Theorem D,
    and in \S3 Theorem C is proven.  Section 4 deals with the question of specializing groups while
preserving their Zariski closures and Theorem 4.1 is proved.  All this is collected to deduce Theorem B in
     \S5.

In preparing for the proof of Theorem B
a subtle difficulty has to be confronted:
 with subgroup growth, one can pass without restricting generality to a finite index subgroup and so one can always
 assume that the Zariski closure $G$ of $\Gamma$ is connected.  On the other hand, normal subgroup
  growth may be sensitive to such a change.
  We must therefore handle also the non-connected case.
So  Theorem C and Theorem D are proven also for the case where $G$ is not necessarily connected.

Raghunathan has made fundamental contributions to the study of congruence subgroups (cf. [R1] [R2] and [R3]).
We are pleased to dedicate this paper, which counts congruence subgroups, as a tribute to him.

 \section*{Notations and conventions}

If $g, f\colon\bbn \to \bbr$ are functions, we say that $g$ grows at least as fast
as $f$ and write $g\succeq f$  if there exists a constant $0 < a \in \bbr$ such that $g(n)\ge f(n)^a$ for every large $n$.
We say that $g$ and $f$ have the same growth type if $g \succeq  f$ and $f \succeq  g$, or equivalently if $\log f(n) \approx \log g (n)$.

Algebraic groups are geometrically reduced, possibly disconnected affine group scheme of finite type over a field.
They are generally  written in italics.
We use calligraphic letters for groups schemes to emphasize that they are {\it schemes}, either because
the base is not a field or because we wish to allow non-reduced groups.
The superscript ${}^\circ$
denotes identity component and $\tilde X$ is the universal covering group of $X$.  Semisimple
groups are connected, but simple groups may have a finite center.

If $\Delta$ is a discrete (resp. profinite) group we denote by $T_n(\Delta)$ the set of normal
(resp. normal open) subgroups of $\Delta$ of index at most $n$ and $t_n(\Delta) = |T_n(\Delta)|$.

\section{Counting normal subgroups and preliminaries}

In this short section we assemble few propositions mainly about counting normal subgroups in finitely
 generated (discrete or profinite) groups, that will be used in the following sections.
\begin{prop}  Let $\Gamma$ be a finitely generated group and $K$ a finite normal subgroup of $\Gamma$. Then
 the normal subgroup growth type of $\Gamma $ is the same as that of $\Delta=\Gamma/K$.
 \end{prop}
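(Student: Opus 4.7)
The plan is to establish $t_n(\Delta) \leq t_n(\Gamma)$ and a reverse bound $t_n(\Gamma) \leq f(n) \cdot t_n(\Delta)$ with $\log f(n) = O(\log t_n(\Delta))$; together these yield equal growth types.

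The first bound is immediate: the pullback $\pi^{-1}\colon T_n(\Delta) \to T_n(\Gamma)$ sending $\bar N \mapsto \pi^{-1}(\bar N)$ is injective and index-preserving. For the reverse, I consider the pushforward $N \mapsto NK/K$, which surjects $T_n(\Gamma)$ onto $T_n(\Delta)$; its fiber over $\bar N$ is $\{N \triangleleft \Gamma : N \leq M,\ NK = M\}$ where $M := \pi^{-1}(\bar N)$. Each $N$ in the fiber has $[M:N]$ dividing $|K|$.

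To bound the fiber size I would induct on $|K|$. If $K$ admits a proper nontrivial $\Gamma$-invariant subgroup $K_1$, then two applications of the inductive hypothesis give $t_n(\Gamma) \asymp t_n(\Gamma/K_1) \asymp t_n(\Gamma/K) = t_n(\Delta)$. Otherwise $K$ is $\Gamma$-characteristically simple, hence either elementary abelian or a product of isomorphic non-abelian simple groups. In the non-abelian case, any fiber element $N \neq M$ has $N \cap K = 1$ (by $\Gamma$-simplicity of $K$), so $[N,K] \leq N \cap K = 1$ forces $M = K \times N$; $\Gamma$-equivariance then reduces any complement homomorphism $N_0 \to K$ to one with image in $Z(K) = 1$, bounding the fiber by $2$. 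In the elementary abelian case, complements correspond to $\Gamma$-equivariant homomorphisms $N_0 \to K$, and the Hochschild--Serre five-term exact sequence
\[
H_2(\Gamma/N_0) \to (N_0^{\mathrm{ab}})_{\Gamma/N_0} \to \Gamma^{\mathrm{ab}} \to (\Gamma/N_0)^{\mathrm{ab}} \to 0
\]
bounds the count of such homomorphisms in terms of $d(\Gamma)$ and the Schur multiplier $|H_2(\Gamma/N_0)|$.

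The main technical obstacle is the elementary abelian case: verifying that the resulting fiber bound, summed over $\bar N \in T_n(\Delta)$, satisfies $\log f(n) = O(\log t_n(\Delta))$. This rests on the fact that both $|H_2(\Gamma/N_0)|$ and $t_n(\Delta)$ are controlled by the same combinatorial invariants of the finite quotients of $\Delta$, so the Schur multiplier contribution is always dominated on the log scale by the growth of $t_n(\Delta)$ itself.
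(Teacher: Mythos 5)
Your overall framework is correct and matches the paper's: the pullback $\bar N\mapsto\pi^{-1}(\bar N)$ gives $t_n(\Delta)\le t_n(\Gamma)$ for free, and the whole content is in bounding the fibers of the pushforward $N\mapsto NK/K$. Your reduction to $K$ a minimal normal subgroup of $\Gamma$ (via induction on $|K|$), the observation that for $N\neq M$ in the fiber one has $N\cap K=1$ and hence $M=N\times K$, and the disposal of the non-abelian case via $Z(K)=1$ are all sound. But the elementary abelian case, which you yourself flag, is a genuine gap and it is exactly where the theorem lives. The five-term sequence reduces the fiber count to the size of $\mathrm{Hom}_{\Gamma/N_0}\bigl((N_0^{\mathrm{ab}})_{\Gamma/N_0},K\bigr)$, and this term receives a contribution from $H_2(\Gamma/N_0)$. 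To finish you would need that $\log|H_2(\Gamma/N_0)|=O(\log t_n(\Delta))$ uniformly over $\bar N\in T_n(\Delta)$, and you offer no argument for this. It is not a formality: $|H_2|$ of a finite quotient of order $m$ can be as large as $m^{\Theta(\log m)}$, whereas for groups $\Delta$ of polynomial normal subgroup growth $\log t_n(\Delta)=O(\log n)$; so the inequality, if true, is a nontrivial statement relating the normal subgroup lattice of $\Delta$ to the Schur multipliers of its finite quotients, and proving it would require essentially the same work as the proposition itself.

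The paper avoids this entirely with a more elementary device that you would do well to internalize. Rather than trying to bound a fiber by a cohomological invariant of $\Delta$, the paper bounds it directly by $t_n(\Delta)$: if $N_1,\dots,N_s$ lie in the fiber over $\bar N$, first pigeonhole on the (boundedly many) possibilities for $N_i\cap K$ to assume $N_i\cap K=K_1$ is constant; then $\bar N/K_1=N_i/K_1\times K/K_1$ for each $i$, and the $s$ projections $\bar N/K_1\to K/K_1$ with kernel $N_i/K_1$ are distinct $\Gamma$-equivariant maps. Restricting to one fixed factor $N_1/K_1\cong\bar N/K$ and dividing out by the boundedly many automorphisms of subgroups of $K/K_1$, one obtains at least $s/c$ pairwise distinct kernels, each a normal subgroup of $\Delta=\Gamma/K$ of index at most $n$. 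Hence every fiber has size at most $c\,t_n(\Delta)$, giving $t_n(\Gamma)\le c\,t_n(\Delta)^2$ with $c$ depending only on $K$. No case split, no Hochschild--Serre, and the bound on the fiber is manufactured out of the very objects being counted. Your approach, even if it could be completed, would buy nothing over this and would be considerably harder to make rigorous.
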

\begin{proof}  Clearly $t_n(\Gamma) \ge t_n(\Delta)$.  On the other hand,
every $N\in T_n(\Gamma)$
  gives rise to a subgroup $NK/K$ of $\Delta$ of index at most $n$.
  So the map $N\to N K/K$ is a surjective map from $T_n(\Gamma)$ onto $T_n(\Delta)$.
  If it is not true that  $t_n(\Delta) \succeq t_n(\Gamma)$,
  then for infinitely
    many values of $n$, $t_n(\Delta) \le t_n(\Gamma)^{1/2}$.  For such an $n$, the fiber of at least one element
     in $T_n(\Delta)$ is of order at least $s=\left[t_n(\Gamma)^{1/2}\right]$,
     i.e., there exist $N_1, \dots, N_s \in T_n (\Gamma)$
     such that all $N_iK$ are equal to each
     other -- say to $\ol N$.  There are only a bounded number $c_1$ of possibilities
     for $N_i\cap K \tle K$, hence by replacing $s$ by $\left[\frac{s}{c_1}\right]$
we can assume
that all the groups  $N_i$  have the same intersection $K_1$ with $K$.
Thus $\ol N/K_1\simeq N_i/K_1\times K/K_1$ for every $i$.
As there are $\left[\frac{s}{c_1}\right]$ different
 $N_i/K_1$, it follows that $\ol N/K_1$ has at least
 $\left[\frac{s}{c_1}\right]$ different $\Gamma$-homomorphisms to $K/K_1$.
 The number of endomorphisms of
 $K/K_1$  is bounded by $c_2$, so $N_i/K_1\simeq \ol N/K$ has at least
 $\left[\frac{s}{c_1c_2}\right]$\
 different $\Gamma$-homomorphisms to $K/K_1$.  This shows that $\ol N/K$ has at least
 $\left[\frac{s}{c_1c_2c_3}\right]$
 subgroups which are normal in $\Gamma/K$ and their index in $\ol N$ is at most $|K/K_1|$, so their
 index in $\Gamma$ is at most $[\Gamma\colon \ol N]\cdot |K/K_1| \le \frac{n}{|K/K_1|} \cdot |K/K_1|=n$.
 Thus $t_n(\Delta) \ge \left[\frac{s}{c_1c_2c_3}\right]$ and $\Delta$ has the same growth as of $\Gamma$.
 \end{proof}
 \begin{prop}  Let $\Gamma$ be a finitely generated group and $\Delta $ a finite index normal subgroup of $\Gamma$.
 Then there exists two constants $c_1$ and $c_2$ such that
 \begin{equation*}  t_n(\Gamma) \le c_1n^{c_2} t_n(\Delta)
 \end{equation*}
 \end{prop}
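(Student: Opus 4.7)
The plan is to assign to each $N \in T_n(\Gamma)$ the subgroup $M := N \cap \Delta$ of $\Delta$, and to show that each fiber of this assignment has size polynomially bounded in $n$. Since $N$ is normal in $\Gamma$ and $\Delta \subseteq \Gamma$, the intersection $M$ is normal in $\Delta$; and the second isomorphism theorem gives
\begin{equation*}
[\Delta : M] \;=\; [N\Delta : N] \;\le\; [\Gamma : N] \;\le\; n,
\end{equation*}
so $M \in T_n(\Delta)$. It therefore suffices to bound the fiber over each fixed $M$ by $c_1 n^{c_2}$.

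To bound a fiber I would fix $M$, set $d := [\Gamma : \Delta]$, and parametrize the $N$'s with $N \cap \Delta = M$ as follows. Any such $N$ is the preimage in $\Gamma$ of the subgroup $L := N/M$ of $\Gamma/M$, so the assignment $N \mapsto L$ is injective. The natural projection $\pi \colon \Gamma/M \to \Gamma/\Delta$ restricts to an injection on $L$ (its kernel is $L \cap \Delta/M = \{1\}$), with image some subgroup $H := N\Delta/\Delta$ of $\Gamma/\Delta$, necessarily of order at most $d$. Thus $L$ is the image of a group-theoretic section of the short exact sequence
\begin{equation*}
1 \longrightarrow \Delta/M \longrightarrow \pi^{-1}(H) \longrightarrow H \longrightarrow 1,
\end{equation*}
and distinct $L$'s produce distinct sections (the section is the inverse of $\pi|_L$).

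Now even the number of \emph{set-theoretic} sections of the displayed sequence is at most $|\Delta/M|^{|H|} \le n^d$. Summing over the at most $2^d$ subgroups $H$ of $\Gamma/\Delta$, the fiber of $N \mapsto M$ over $M$ has cardinality at most $2^d \cdot n^d$, whence
\begin{equation*}
t_n(\Gamma) \;\le\; 2^d \, n^d \, t_n(\Delta),
\end{equation*}
so that $c_1 = 2^d$ and $c_2 = d$ work. The argument is a clean fiber-counting exercise with no real obstacle; in particular, the extra requirement that $L$ be normal in $\Gamma/M$ is not needed, and using it would only sharpen the bound.
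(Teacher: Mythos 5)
Your proof is correct, and while it shares the paper's skeleton---fiber $T_n(\Gamma)$ over $T_n(\Delta)$ via $N\mapsto N\cap\Delta$, and for each $M$ split the fiber according to the subgroup $H=N\Delta/\Delta$ of the finite group $\Gamma/\Delta$---your key counting step is genuinely more elementary. The paper observes that $N/M$ is a \emph{normal} complement to $\Delta/M$ in $N\Delta/M$ and that (since all the groups involved are normal in $\Gamma$) the corresponding retraction is a $\Gamma$-equivariant homomorphism; it then bounds the number of such complements by the number of $\Gamma$-homomorphisms $N\Delta/M\to\Delta/M$, giving $c_2=\max_K d_\Gamma(K)$ where $d_\Gamma(K)$ is the number of $\Gamma$-generators of a normal overgroup $K\supseteq\Delta$. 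You instead observe that $N/M$ is a subgroup of $\pi^{-1}(H)$ mapping bijectively onto $H$, hence is determined by a set-theoretic section of $\pi^{-1}(H)\to H$, of which there are at most $|\Delta/M|^{|H|}\le n^d$; this discards the group structure and the normality entirely, as you note. The trade-off is a cruder constant $c_2=d=[\Gamma:\Delta]$ versus the paper's potentially smaller $\max_K d_\Gamma(K)$, but for the purposes of the proposition any constant $c_2$ suffices, so your simplification is harmless and arguably cleaner.
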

 \begin{remark*}  We do not know a useful upper bound on $t_n(\Delta)$ in terms  of $t_n(\Gamma)$.
 Such a bound could save us the trouble of   treating non-connected algebraic groups.
 \end{remark*}
\begin{proof}  If $N \in T_n(\Gamma)$  then $D=N\cap\Delta \in T_n(\Delta)$  and $K=N\Delta$ is normal in $\Gamma $ containing $\Delta$.
 Given $D$ and $K$ as above, the number of $N\tle \Gamma$ with $N\cap \Delta = D$ and $N\cdot
 \triangle = K$ is bounded by  $n^c$ for a suitable constant $c$.  Indeed, $N/D$ should be a normal
 complement to the normal subgroup $\Delta/D$ in $K/D$.  The number of such complements is bounded by the number of
possible $\Gamma -$ homomorphisms from $K/D$ to $\Delta/D$.  The latter is at most $|\Delta /D|^{d_\Gamma (K)}$
 where $d_\Gamma (K)$ denotes the number of $\Gamma$-generators of $K$.  The proposition now follows with $c_1$
 equals the number of normal subgroups $K$ of $\Gamma$ containing $\Delta$ and $c_2$ the maximum of $d_\Gamma (K)$
 over these  possible $K$.
 \end{proof}
 \begin{lem} Let $G = A \times B$ be a product of two groups and $N\tle G$.  Let $A_1=N\cap A,
  B_1=N\cap B, A_2 = \pi_A(N)$ and $B_2 = \pi_B(N)$ where $\pi_A$  and $\pi_B$ are the projections
   to $A$ and $B$, respectively.  Then
   \begin{enumerate}
   \item[\rm (i)] $A_2/A_1$ (resp. $B_2/B_1)$ is central in $A/A_1$ (resp. $B/B_1$).

   \item[\rm (ii)]  There exists an isomorphism $\vp\colon A_2/A_1 \to B_2/B_1$ such
    that $N/(A_1\times B_1)$ is the graph of $\vp$.
    \end{enumerate}
    \end{lem}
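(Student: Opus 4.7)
The plan is to recognize this as (a light normal-subgroup variant of) Goursat's lemma, and carry out the standard verification.

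First I would prove (i). Pick $a\in A_2$ and choose any $b\in B$ with $(a,b)\in N$. For an arbitrary $a'\in A$, the element $(a',1)$ lies in $G$, so normality of $N$ gives
\begin{equation*}
(a',1)(a,b)(a',1)^{-1}(a,b)^{-1}=([a',a],1)\in N,
\end{equation*}
so $[a',a]\in N\cap A=A_1$. Since $a'\in A$ was arbitrary, the coset $aA_1$ is central in $A/A_1$. The symmetric argument with $(1,b')$ handles $B_2/B_1$.

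For (ii), I would define $\varphi\colon A_2/A_1\to B_2/B_1$ by $\varphi(aA_1)=bB_1$ for any $b$ with $(a,b)\in N$. Well-definedness reduces to checking that if $(a,b),(a',b')\in N$ with $aA_1=a'A_1$, then $bB_1=b'B_1$: writing $a=a'a_0$ with $a_0\in A_1$ and using $(a_0,1)\in N$, we get $(1,bb'^{-1})=(a,b)(a_0,1)^{-1}(a',b')^{-1}\in N$, so $bb'^{-1}\in B_1$. That $\varphi$ is a homomorphism is immediate from componentwise multiplication in $N$. It is injective, since $\varphi(aA_1)=B_1$ means $(a,b)\in N$ for some $b\in B_1$, and then $(a,1)=(a,b)(1,b)^{-1}\in N$, so $a\in A_1$. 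It is surjective by the symmetric construction.

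Finally, for the graph description, observe that $A_1\times B_1\subseteq N$ (both factors lie in $N$), so $N/(A_1\times B_1)$ makes sense inside $(A/A_1)\times (B/B_1)$; its image consists precisely of pairs $(aA_1,bB_1)$ with $(a,b)\in N$, which by construction is exactly $\{(aA_1,\varphi(aA_1)):a\in A_2\}$. I do not anticipate any real obstacle: the whole statement is essentially bookkeeping once the commutator identity in (i) is written down, the only subtle point being to check that $\varphi$ does not depend on the choice of lift $b$, which falls out of $N\cap B=B_1$.
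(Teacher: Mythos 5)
Your proof is correct and follows essentially the same Goursat-type argument as the paper; the only cosmetic difference is that you establish (i) directly from the commutator relation $([a',a],1)\in N\cap A=A_1$, whereas the paper first obtains $\varphi$ from the two canonical isomorphisms $A_2/A_1\leftarrow N/(A_1\times B_1)\rightarrow B_2/B_1$ and then deduces centrality from the injectivity of $\varphi$ under conjugation by $(x,1)$. Your explicit checks of well-definedness, injectivity, and the graph description simply spell out what the paper labels as ``clear,'' so there is no gap.
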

    \begin{proof}   Clearly $A_2/A_1\tilde\leftarrow N/(A_1\times B_1) \tilde\rightarrow B_2/B_1$,
and this defines $\vp$ satisfying (ii).  We claim that $A_1/A_2$ is central in $A/A_1$. Indeed,
for all $x \in A$
\begin{equation*}
(xA_1, B_1)^{-1} \left(aA_1, \vp (aA_1)\right) (xA_1, B_1)= (x^{-1}axA_1, \vp(aA_1)\big).
\end{equation*}
As $\vp$ is an isomorphism, this implies $x^{-1} axA_1 = aA_1$, i.e., $aA_1$ commutes with $xA_1$.
By symmetry $B_2/B_1$ is central in $B/B_1$ and (i) is proved.
\end{proof}
\begin{cor} Let $G=A\times B$ as above.  If for every (finite index) normal subgroup $M\tle A, Z(A/M) = \{ 1\}$, then
 every  (finite index) normal subgroup $N\tle G$ is of the form $N=(N\cap A) \times
  (N\cap B)$.
  \hfill $\square $
\end{cor}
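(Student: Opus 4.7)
The plan is to apply Lemma 1.3 directly, using the centrality conclusion (i) together with the isomorphism in (ii) to force both quotients $A_2/A_1$ and $B_2/B_1$ to be trivial.

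More precisely, let $N \trianglelefteq G$ be a normal subgroup (of finite index, in the parenthetical case), and keep the notation $A_1 = N \cap A$, $B_1 = N \cap B$, $A_2 = \pi_A(N)$, $B_2 = \pi_B(N)$ from Lemma 1.3. Since $A_1 = N \cap A$ is normal in $G$ and in particular in $A$, and has finite index in $A$ when $N$ has finite index in $G$, the hypothesis of the corollary applies to $M = A_1$. Thus $Z(A/A_1) = \{1\}$.

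Part (i) of Lemma 1.3 says that $A_2/A_1$ is central in $A/A_1$, so the triviality of $Z(A/A_1)$ forces $A_2 = A_1$. Part (ii) then gives an isomorphism $\varphi \colon A_2/A_1 \to B_2/B_1$, and since the domain is trivial we get $B_2 = B_1$ as well. Consequently $N/(A_1 \times B_1)$, which is the graph of $\varphi$, is trivial, so $N = A_1 \times B_1 = (N \cap A) \times (N \cap B)$.

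There is no real obstacle here; the only point requiring a word of care is confirming that the hypothesis of the corollary can indeed be applied to the specific subgroup $M = A_1$. In the finite-index version one uses that $[A : A_1] \le [G : N] \le n$ is finite. Once this is observed, the corollary follows from Lemma 1.3 in two lines.
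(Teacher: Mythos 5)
Your proof is correct and is exactly the argument the paper intends: the corollary is stated with no written proof because it follows immediately from Lemma 1.3 in the way you describe (centrality of $A_2/A_1$ plus triviality of $Z(A/A_1)$ forces $A_2=A_1$, hence $B_2=B_1$ via $\vp$, so $N=(N\cap A)\times(N\cap B)$). Your remark that $A_1=N\cap A$ is a (finite index) normal subgroup of $A$, so the hypothesis applies to $M=A_1$, is the only point needing care, and you handled it.
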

\begin{prop}  Let $G=A\times B$ be a product of two groups.  Then:
\begin{equation*}
t_n(G)\le t_n(A)^2\cdot t_n(B)^2\cdot z_n(A)^{\delta_n(A)}
\end{equation*}
where
\begin{equation*}
z_n(A) = \max \{ | Z(A/N)| \Big| N\tle A \; \; \text{and} \; \; [A\colon N]\le n\}
\end{equation*}
 and
 \begin{equation*}
\delta_n(A) = \max
\{ d\big(Z(A/N)\big) | N\tle A \; \; \text{and}\; \;  [A\colon N]\le n\}
\end{equation*}
(when $d(X)$ denotes the number of generators
of $X$).
\end{prop}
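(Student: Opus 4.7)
The plan is to use Lemma 1.3 to encode each normal subgroup $N \triangleleft G$ by the five-tuple $(A_1, A_2, B_1, B_2, \varphi)$, where $A_1 = N \cap A$, $A_2 = \pi_A(N)$, $B_1 = N \cap B$, $B_2 = \pi_B(N)$, and $\varphi \colon A_2/A_1 \to B_2/B_1$ is the isomorphism whose graph is $N/(A_1 \times B_1)$. The assignment $N \mapsto (A_1, A_2, B_1, B_2, \varphi)$ is injective, since one recovers $N = \{(a,b) \in A_2 \times B_2 : \varphi(aA_1) = bB_1\}$; it therefore suffices to bound the number of such tuples arising from $N \in T_n(G)$.

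For the pair $(A_1, A_2)$: when $[G:N] \le n$, the embedding $A/A_1 = A/(A \cap N) \hookrightarrow G/N$ forces $[A:A_1] \le n$, and since $A_1 \subset A_2 \triangleleft A$ we also have $[A:A_2] \le n$. Thus $A_1, A_2 \in T_n(A)$, contributing at most $t_n(A)^2$ choices; symmetrically, $(B_1, B_2)$ contributes at most $t_n(B)^2$.

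For the isomorphism $\varphi$: by Lemma 1.3(i), $A_2/A_1$ sits inside $Z(A/A_1)$, and since a subgroup of a finitely generated abelian group needs no more generators than the ambient group, $d(A_2/A_1) \le d(Z(A/A_1)) \le \delta_n(A)$. The map $\varphi$ is determined by the images of these generators in $B_2/B_1$, which has order $|A_2/A_1| \le |Z(A/A_1)| \le z_n(A)$, yielding at most $z_n(A)^{\delta_n(A)}$ choices. Multiplying the three bounds gives the claim. The argument is essentially bookkeeping once Lemma 1.3 is in hand; the one subtle point is that $[A:A_1]$ is bounded \emph{directly} by $[G:N]$ via the injection $A/A_1 \hookrightarrow G/N$, so no auxiliary factor depending on $|A_2/A_1|$ contaminates the bound—which is what keeps the final inequality as clean as it is.
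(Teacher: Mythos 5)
Your proof is correct and follows essentially the same route as the paper: encode $N$ by the data $(A_1,A_2,B_1,B_2,\varphi)$ from Lemma 1.3, bound the pairs by $t_n(A)^2\cdot t_n(B)^2$, and bound the number of isomorphisms $\varphi$ by $z_n(A)^{\delta_n(A)}$ using that $A_2/A_1$ is central in $A/A_1$, hence has at most $\delta_n(A)$ generators, each with at most $z_n(A)$ possible images. The only cosmetic difference is that the paper phrases the last bound in terms of automorphisms of $A_2/A_1$ rather than isomorphisms to $B_2/B_1$, which is equivalent.
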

\begin{proof}
Apply Lemma 1.3.
The two pairs of normal subgroups $A_1 \le A_2$ in $A$ and $B_1 \le B_2 $ in $B$ (so that $A_2/A_1$
 (resp. $B_2/B_1$) is central in $A/A_1$ (resp. $B/B_1)$) together with the isomorphism
  $\vp\colon A_2/A_1\to B_2/B_1$ determine $N$.
  This proves that
  \begin{equation*}
  t_n(G)\le t_n(A)^2\cdot t_n(B)^2\cdot h_n
  \end{equation*}
  where $h_n$ is the maximum possible number of isomorphisms from $A_2/A_1$ to $B_2/B_1$ as above,
  or equivalently, $h_n$ is an upper bound on the number of automorphisms of $A_2/A_1$ when $A_1\le A_2$
   are normal subgroups of index at most $n$ in $A$, such that $A_2/A_1$ is central in $A/A_1$.
   Clearly $h_n \le z_n(A)^{\delta_n(A)}$ (note that as $A_2/A_1$ is abelian and central
   in $A/A_1$, $d(A_2/A_1)\le d\big(Z(A/A_1)\big)$.  The proposition is therefore proved.
   \end{proof}
\begin{prop} If $\Gamma$ is  a finitely generated discrete or profinite group, then
$t_n(\Gamma) \preceq n^{\log n}$.
\end{prop}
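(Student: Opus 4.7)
Plan. I would first reduce to $\Gamma$ being a topologically $d$-generated profinite group: open normal subgroups of the profinite completion of a finitely generated discrete group correspond bijectively to its finite-index normal subgroups with the same index, so $t_n(\Gamma) = t_n(\widehat{\Gamma})$. From now on $\Gamma$ is profinite with a fixed number $d$ of topological generators.

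For each $N \in T_n(\Gamma)$ of index $m \le n$, I would fix a chief series $1 = K_0 \subsetneq K_1 \subsetneq \ldots \subsetneq K_\ell = \Gamma/N$ of length $\ell \le \log_2 m \le \log_2 n$, whose successive quotients $K_i/K_{i-1}$ are characteristically simple. Pulling back yields a strictly descending chain $\Gamma = L_0 \supsetneq L_1 \supsetneq \ldots \supsetneq L_\ell = N$ of closed normal subgroups of $\Gamma$, with $[L_{i-1}:L_i] = a_i$ and $\prod_i a_i = m$. Counting such chains gives an upper bound on $t_n(\Gamma)$.

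The key estimate is that, with $L_{i-1}$ fixed, the number of valid $L_i$ with $[L_{i-1}:L_i]=a$ is at most $a^{c \log a}$ for some constant $c = c(d)$. For abelian chief factors $(\mathbb{F}_p)^r$, the Nielsen--Schreier formula bounds the number of topological generators of $L_{i-1}$, hence the $\mathbb{F}_p$-rank of $L_{i-1}^{\mathrm{ab}} \otimes \mathbb{F}_p$, and hence the number of irreducible $\Gamma$-invariant submodules of any prescribed order. For non-abelian chief factors $S^k$, the classification of finite simple groups gives a polynomial bound on the number of simple groups of given order and on the embeddings realizing a chief factor.

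Multiplying across the chain and summing over chain profiles gives
\begin{equation*}
t_n(\Gamma) \le \sum_{(a_1, \ldots, a_\ell)} \prod_i a_i^{c \log a_i} \le n^{c' \log n},
\end{equation*}
using $\sum_i (\log a_i)^2 \le \bigl(\sum_i \log a_i\bigr)^2 \le (\log n)^2$ to control the exponent. The main obstacle is the chief-factor estimate, uniformly down the chain: by Nielsen--Schreier the rank of $L_{i-1}$ grows linearly in $[\Gamma:L_{i-1}]$, so one must exploit that chief factors are \emph{minimal} $\Gamma$-invariant subobjects (not arbitrary submodules) to keep the exponent in the final bound at $O(\log n)$ rather than the cruder $O((\log n)^2)$ one would get by naively counting all available submodules.
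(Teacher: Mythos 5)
The paper's own proof is a two-line reduction: a $d$-generated (discrete or profinite) group is a quotient of the (pro)free group $F_d$, normal subgroups of the quotient lift injectively and index-preservingly to normal subgroups of $F_d$, and $t_n(F_d)\preceq n^{\log n}$ is the main theorem of [L3], whose real content is a count of $d$-generated finite groups of order at most $n$ via Holt's cohomology bound $\dim H^2(G,M)=O(\log|G|)\dim M$ (this is where CFSG enters, cf.\ Remark 1.7). You instead try to count normal subgroups directly, by counting chains $\Gamma=L_0\supsetneq L_1\supsetneq\cdots\supsetneq L_\ell=N$ refining a chief series of $\Gamma/N$, and everything hinges on your "key estimate" that for fixed $L_{i-1}$ the number of admissible $L_i$ with $[L_{i-1}:L_i]=a$ is at most $a^{c\log a}$ with $c=c(d)$.

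That estimate is false, and the minimality of chief factors, which you hope will save it, cannot. Take $M=SL_d(\mathbb{F}_p[[t]])$ with $p\mid d$ (a topologically finitely generated profinite group, Example 2.2 of the paper): the congruence quotient $M/Q_{pr}$ has a central elementary abelian subgroup $Z\cong C_p^{(p-1)r}$. Let $L_{i-1}$ be the preimage of $Z$ in $M$. Every index-$p$ subgroup of $Z$ pulls back to an admissible $L_i$: it is normal, of index $p$ in $L_{i-1}$, and $L_{i-1}/L_i$ has order $p$, hence is automatically a chief factor of $M/L_i$. So with $a=p$ the number of choices at this single step is at least $p^{(p-1)r-1}$, unbounded as $r\to\infty$, while your bound would be the constant $p^{c\log p}$. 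This is not a repairable technicality: central elementary abelian sections of rank about $\log n$ inside quotients of order $n$ are precisely the source of the extremal $n^{\log n}$ growth, so no per-step bound depending only on $a$ and $d$ can exist. The Nielsen--Schreier bound you cite gives only $d(L_{i-1})\le 1+d[\Gamma:L_{i-1}]$, i.e.\ a per-step count exponential in $n$, which multiplied along the chain yields nothing near $n^{\log n}$; the weakest per-step bound that would make your summation work is of the shape $a^{O(\log n)}$, and proving such a uniform bound is essentially the content of the presentation/cohomology counting in [L3] that your outline was meant to replace. As it stands, the proposal is a plan whose central lemma is unproved and, as stated, false.
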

\begin{proof}
This is proved in [L3] for the free groups; it therefore follows for every group.
\end{proof}
\begin{remark}  While  the proof of the general result in [L3] requires the classification of the finite
 simple groups (CFSG), this is not always needed for a given profinite or discrete group.
 The CFSG has been used in [L3] via the result of Holt [Ho] which implies that for every finite simple
  group $G$, every prime $p$ and every simple $\bbf_p[G]$-module $M,  \dim H^2(G, M) = O(\log|G|) \dim M$.
  Now, if $\Gamma$ is a profinite group whose finite composition factors satisfy Holt's inequality  (for every
   $p$ and every $M$) then  Proposition 1.6 holds for $\Gamma$.  Now, the proof of Holt in [Ho] for the {\it known} simple
   groups is still valid, even if one does not assume the CFSG.  In our papers, all the relevant
    profinite groups are such that almost all their composition factors are known, so Proposition 1.6 holds for them.
It is worth mentioning that  we also  use  [P2] later, which not only improves [W]
 but also frees it from CFSG.
    Our paper is therefore classification free!
    \end{remark}
\begin{prop} Let $G$ be a simple algebraic group defined over an algebraically closed field $F$ of characteristic
 $p \ge 0$.  Let $\tilde G$ be the universal cover of $G$ and $\calz(\tilde G)$ its scheme-theoretic
 center.  Assume $p\nmid |\calz(\tilde G)|$ and that the action of $G$ on $L=Lie(G)$ is not irreducible.
Then  either \ (i)\  $p=2$ and $G$ is of type $F_4$ or \ (ii) \ $p=3$ and $G$ is of type
$G_2$.
In either case $L$ has an ideal $I \tle L$ such that $L/I$ is isomorphic to $I$ (as a Lie algebra and as $G$-module).
In case \ (i), $I$ is of type $D_4$   and in case (ii) of type $A_2$.
\end{prop}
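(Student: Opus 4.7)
My plan is to reduce the proposition to a combinatorial analysis of root spaces via the Chevalley structure constants. First, since $p \nmid |\calz(\tilde G)|$, the scheme-theoretic center is étale, so the central isogeny $\tilde G \to G$ is separable and induces an isomorphism on Lie algebras; moreover $Lie(\calz(\tilde G)) = 0$, whence $L$ has trivial center. Because $G$ acts on $L$ by Lie algebra automorphisms, every $G$-invariant subspace is a Lie ideal, and the hypothesis that $L$ is $G$-reducible produces a proper nonzero Lie ideal $I \subset L$.

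Fix a maximal torus $T \subset G$ with Cartan $H = Lie(T)$ and root decomposition $L = H \oplus \bigoplus_\alpha L_\alpha$. Then $I$ is $T$-stable, so $I = I_0 \oplus \bigoplus_{\alpha \in S} L_\alpha$ with $I_0 \subseteq H$, and it is $N(T)$-stable, forcing $S$ to be a union of Weyl orbits on roots. For the simply-laced types ($A_n, D_n, E_n$) the Weyl group acts transitively on roots; then either $S = \emptyset$, in which case the $G$-invariance of $I_0 \subseteq H$ together with triviality of $Z(L)$ forces $I = 0$, or $S$ is the entire root system, in which case the brackets $[L_\alpha, L_{-\alpha}]$ span $H$ and $I = L$. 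For the non-simply-laced types ($B_n, C_n, F_4, G_2$) there are exactly two Weyl orbits (long and short roots), and the question becomes: when is the span of one orbit, plus an appropriate sub-Cartan, a Lie ideal? Using the Chevalley structure constants $N_{\alpha,\beta} \in \{\pm 1, \pm 2, \pm 3\}$, this closure holds in characteristic $p$ precisely when the relevant mixing constants all vanish mod $p$. A case-by-case inspection shows this occurs for $B_n, C_n, F_4$ when $p = 2$ (structure constants $\pm 2$ appear for short-plus-short sums equaling a long root) and for $G_2$ when $p = 3$ (a constant $\pm 3$ appears). Since the simply connected groups of type $B_n$ and $C_n$ have $|\calz(\tilde G)| = 2$, the hypothesis $p \nmid |\calz(\tilde G)|$ eliminates those, leaving exactly cases (i) and (ii).

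The main obstacle is identifying the Lie-algebra type of $I$ and exhibiting the isomorphism $L/I \cong I$ both as Lie algebras and as $G$-modules. For this I would invoke the very special purely inseparable isogeny $\pi : G \to G$ which exists exactly for $F_4$ in characteristic $2$ and $G_2$ in characteristic $3$ (the one underlying the Ree group constructions ${}^2F_4$ and ${}^2G_2$). Its differential $d\pi : L \to L$ is a $G$-equivariant Lie-algebra endomorphism whose kernel is $I$ and whose image is a subalgebra of $L$ isomorphic to $L/I$; composing $d\pi$ with itself then produces the desired isomorphism $L/I \cong I$ in both categories. To identify the Lie algebra type, I would use that the $24$ long roots of $F_4$ form a sub-root-system of type $D_4$ and the $6$ long roots of $G_2$ form one of type $A_2$, so that together with the appropriate Cartan contribution, $I$ becomes a Lie algebra of type $D_4$ in case (i) (of dimension $26 = \tfrac{1}{2}\dim L$) and of type $A_2$ in case (ii) (of dimension $7 = \tfrac{1}{2}\dim L$). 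Verifying that the Chevalley-basis description of $I$ coincides with the one obtained from the isogeny is the most delicate technical step.
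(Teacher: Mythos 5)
The paper offers no argument for this proposition at all --- its proof is the single citation ``See [H]'' (Hogeweij) --- so your attempt to reprove it by root-space combinatorics is a genuinely different route, and in outline it is the standard one: a $G$-submodule of $L$ decomposes into $T$-weight spaces, $N(T)$-stability forces its root support to be a union of Weyl orbits, the hypothesis $p\nmid|\calz(\tilde G)|$ makes the roots span the character lattice and the coroots span the Cartan modulo $p$ (this, rather than a bare appeal to $\mathrm{Lie}(\calz(\tilde G))=0$, is the correct justification of your ``trivial centre'' step), and a structure-constant check shows the only surviving candidate is the span of the \emph{short} root spaces plus the short coroots, which is an ideal exactly when $p=2$ for $B_n,C_n,F_4$ and $p=3$ for $G_2$, the types $B_n,C_n$ being excluded by the centre hypothesis. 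Up to the unverified case-by-case inspections, that part is sound.

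The final step, however, fails as written, in two places. First, the special isogeny satisfies $\pi\circ\pi=$ Frobenius, so $d\pi\circ d\pi=0$: ``composing $d\pi$ with itself'' is the zero map, not an isomorphism. What you want is that $d\pi$ itself induces $L/\ker(d\pi)\cong \mathrm{im}(d\pi)$, together with the observation that $\mathrm{im}(d\pi)=\ker(d\pi)=I$ (since $(d\pi)^2=0$ and $\dim\mathrm{im}\,d\pi=\dim L-\dim\ker d\pi$). Second, $d\pi$ is not $G$-equivariant; it satisfies $d\pi(\mathrm{Ad}(g)x)=\mathrm{Ad}(\pi(g))\,d\pi(x)$, so the induced isomorphism $L/I\to I$ intertwines the adjoint actions only up to the twist by $\pi$. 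Indeed $L/I$ and $I$ cannot be isomorphic as algebraic $G$-modules: their $T$-weights are the long roots, respectively the short roots (plus $0$). So the ``as $G$-module'' clause can only hold up to this twist (harmless for the paper's later use, since $\pi$ is bijective on $F$-points, $F$ being algebraically closed), an imprecision you inherit from the statement but which your proof should make explicit rather than claim outright. Finally, a smaller slip: $I=\ker d\pi$ is spanned by the \emph{short} root spaces together with a piece of the Cartan (of dimension $2$ for $F_4$ and $1$ for $G_2$, giving $26$ and $7$), not by the long ones; the long-root span is never an ideal, since constants $\pm 1$ occur in brackets $[\mathrm{long},\mathrm{short}]=\mathrm{short}$. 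Fortunately the short subsystems are again of types $D_4$ and $A_2$, so the stated types survive, but your identification paragraph conflates the two orbits.
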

\begin{proof}  See [H].
\end{proof}
 Throughout the paper if we are in either case (i) or (ii), we say that $G$ is a group of {\bf Ree type}.

\section{The local case}

Let $k$ be a local (non-archimedean) field of characteristic $p \ge 0$, and $G$ an algebraic group defined
 over $k$, with a semisimple connected component $G^\circ$, whose universal cover we denote $\widetilde{G^\circ}$.
 Let $\calz$ be the scheme-theoretic center of $\widetilde{G^\circ}$.  It is a finite group scheme of order
 $z = |\calz |$.  In other words $z$ is the dimension of the coordinate ring of $\calz$
  as a vector space over $k$.  Another way to think about $z$ is as the index of the lattice generated by the absolute roots of $G^\circ$ in the
   lattice of weights.

    We can now state the main result of this section.
\begin{thm}  Let $G$ be as above and $M$ a (topologically)      finitely generated Zariski-dense
compact subgroup of $G(k)$.  Then the normal subgroup growth rate of $M$ is:
\begin{enumerate}
\item[\rm (i)] $n^{\log n} $ if $p \Big| \, |\calz|$
\item[\rm (ii)] $n$ if $p\nmid |\calz|$ and $G^\circ$ has a simple factor of Ree type
\item[\rm (iii)] $\log n$ otherwise.
\end{enumerate}
\end{thm}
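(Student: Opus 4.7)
My plan is to analyze $M$ via the principal congruence filtration of $\calg(\calo)$ and translate the enumeration of open normal subgroups into a counting problem about compatible chains of $G^\circ$-submodules of the residual Lie algebra. First, by Zariski density and compactness, after conjugation I may assume that $M$ sits in $\calg(\calo)$ for a smooth $\calo$-model $\calg$ with generic fiber $G$. Set $M^\circ := M \cap \calg^\circ(\calo)$, a finite-index normal subgroup of $M$. In cases (i) and (ii) the target growth types ($n^{\log n}$ and $n$) are preserved under multiplication by a polynomial factor, so Proposition 1.2 lets us replace $M$ by $M^\circ$; case (iii), where the target is $\log n$, requires a separate argument bounding normal subgroups of $M$ that do not contain $M^\circ$ via a direct enumeration of complements, using that $M/M^\circ$ is a fixed finite group.

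Next, consider the filtration $M_j := M^\circ \cap \ker\bigl(\calg(\calo) \to \calg(\calo/\pi^j)\bigr)$. For $j$ sufficiently large each quotient $M_j/M_{j+1}$ is abelian and identifies $M$-equivariantly with $\mathfrak{g}(\kappa)$ via the $\mathrm{Ad}$-action, where $\mathfrak{g} := \mathrm{Lie}(\calg^\circ)$ and $\kappa$ is the residue field of $\calo$. Any open normal subgroup $N \tle M^\circ$ contains some $M_j$ and is determined by the sequence of $G^\circ$-submodules $N_i := (N \cap M_i)M_{i+1}/M_{i+1} \subseteq \mathfrak{g}(\kappa)$, subject to compatibility coming from the graded Lie bracket. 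Thus counting open normal subgroups of $M^\circ$ reduces to counting compatible descending chains of $G^\circ$-submodules of $\mathfrak{g}(\kappa)$, with an error term controlled by the depth at which the filtration is first abelian.

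The trichotomy is then dictated by the $G^\circ$-module structure of $\mathfrak{g}(\kappa)$, as described in Proposition 1.8. In case (iii), $\mathfrak{g}(\kappa)$ is irreducible, so each $N_i$ is either $0$ or all of $\mathfrak{g}(\kappa)$; this forces $N$ to be essentially one of the $M_j$'s, giving $t_n(M^\circ) \asymp \log n$ with matching upper and lower bounds. In case (ii), the Ree condition supplies a length-two composition series $0 \subset I \subset \mathfrak{g}$ with $\mathfrak{g}/I \cong I$ as $G^\circ$-modules, yielding a richer submodule lattice at each level (including diagonal graphs of isomorphisms $I \to \mathfrak{g}/I$); a level-by-level count shows that the number of chains of length $j$ is bounded by a polynomial in $|\mathfrak{g}(\kappa)|^j$, giving $t_n(M^\circ) \asymp n$, with matching lower bound by an explicit construction of normal subgroups graded by this composition series. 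In case (i), $p \mid |\calz|$ forces $\mathfrak{g}(\kappa)$ to contain a nontrivial central submodule (with non-reduced centralizer scheme), and iterated extensions along this central direction produce an $n^{\log n}$ family of open normal subgroups; the matching upper bound follows from Proposition 1.6.

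The main obstacle is the lower bound in case (i): building an $n^{\log n}$ family of normal subgroups requires a Frattini-style or $p$-central-series argument that exploits both the center of $\mathfrak{g}$ in characteristic $p$ and the scheme-theoretic structure of $\calz$ in $\widetilde{G^\circ}$, and it must interact coherently across $\log n$ many filtration levels to produce the $(\log n)^2$ bits of information needed. A secondary technical point is the non-connected treatment in case (iii), where Proposition 1.2 is too coarse and one must control normal subgroups of $M$ by a cohomological enumeration of complements of $M^\circ/M_j$ in $\pi^{-1}(H)/M_j$ for each subgroup $H \leq M/M^\circ$.
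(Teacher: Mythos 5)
The central gap is your use of Proposition 1.2 to ``replace $M$ by $M^\circ$'' in cases (i) and (ii). That proposition says $t_n(\Gamma)\le c_1n^{c_2}t_n(\Delta)$ for a finite-index normal subgroup $\Delta$, so it transfers \emph{upper} bounds from $M^\circ$ to $M$; it gives no way to convert a lower bound for $t_n(M^\circ)$ into a lower bound for $t_n(M)$, since normal subgroups of $M^\circ$ need not be normal in $M$ and their $M$-normal cores can collapse the count. The paper states explicitly (remark after Proposition 1.2) that no useful inequality in that direction is known and that this is exactly what forces it to handle non-connected groups head-on. Accordingly, its lower-bound constructions are made equivariant under all of $M$ (whose outer part can act nontrivially on the factors and on the center): in case (i) Lemma 2.3 produces a nontrivial $\bbf[\Delta]$-module $T$, $\Delta=M/M^\circ$, with $T^n$ sitting centrally in $\tilde M/Q_{2\gamma n}$, and one counts $\Delta$-submodules of $T\otimes(\calo/(\pi))^n$; in case (ii) Lemma 2.4 works with $\Delta$-invariant products $N^r$ of the explicitly constructed subgroups $N_{d,q}$. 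Your outline has no substitute for this, so the lower bounds in (i) and (ii) are not proved for $M$ itself.

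A second gap is your assumption that for large $j$ the graded quotients $M_j/M_{j+1}$ identify with the full residual Lie algebra of $\calg$. The hypothesis is only that $M$ is a compact \emph{Zariski-dense} subgroup of $G(k)$, not an open one: $M$ could be the integral points over a proper closed subfield, or of a quasi-model (a twisted or isogenous form over a smaller field), in which case your graded pieces are strictly smaller and both your chain-counting upper bounds and your lower-bound constructions break down. The paper repairs this by Pink's minimal quasi-model theorem [P1, 0.2, 3.6], replacing the triple by one over a subfield $k'$ in which (after pulling back to the simply connected cover and using Proposition 1.1 to kill the finite kernel) the relevant subgroup is genuinely open; none of this appears in your sketch. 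Relatedly, in case (i) the phrase ``a central submodule plus iterated extensions'' is not yet an argument: one needs, inside quotients of index about $c^n$, central elementary abelian $p$-subgroups of rank growing linearly in $n$, with controlled $\Delta$-action and with elements actually of order $p$; the paper gets these from the $\calo/\pi^{2\gamma n}$-points of the Zariski closure of the infinitesimal center scheme $\calz$ in the Bruhat-Tits model (Lemma 2.3), the point being that $\calz$ has no $k$-points but many points over Artinian quotients. Finally, your count never addresses the semisimple (several-factor) case, where the upper bounds in (ii) and (iii) require Proposition 1.5 together with the bounded-center statements of Propositions 2.5(i) and 2.6(i) to exclude diagonal normal subgroups; this is a smaller, but real, omission.
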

Let us start with an example which is also treated in [BG].
\begin{examp}  Let $M=SL_d(\bbf_p[[t]])$.  If $p\nmid d$ then for every open normal subgroup
$N$ of $M$, there exists $r \in  \bbn$ such that $Q_r \subseteq N \subseteq Z_r$ where
\begin{equation*}
Q_r =Ker
(SL_d(\bbf_p[[t]])\to SL_d(\bbf_p[[t]]/(t^r))
\end{equation*}
 and $Z_r$ is the preimage in $M$ of the center of $\ol M(r) = SL_d
(\bbf_p[[t]]/(t^r))$.  Such $N$ is of index approximately $p^{(d^2-1)r}$ and
\ $ Z_r/Q_r$ is of order bounded independently of $r$, so there are only a bounded number of possibilities for such $N$.  Hence $t_n(M)$ grows like $\log n$.

On the other hand, if $p|d$, then for every $r$, the group $\ol M(pr)$ has a large center:
it consists of all the scalar matrices of the form $(1+y) I_d$ where $y \in (t^r)/(t^{pr})$.  Note that
$(1+y)^p=1$ in the ring $\bbf_p[[t]]/(t^{pr})$, so $det\big((1+y)I_d\big) = 1$.
Now, $|(t^r)/ (t^{pr}) | = p^{(p-1)r}$ and so $\ol M(pr)$ has a $p$-elementary abelian central subgroup
 of rank $(p-1)r$.  Hence, it has at least $p^{\frac14(p-1)^2r^2}$ normal subgroups of index at most
 $|M(pr)| \approx p^{(d^2-1)pr}$.
 Therefore $t_n(M)$ grows at least as fast as $n^{\log n}$. By (1.6),  this is the largest normal subgroup
 growth possible.
\end{examp}
 The proof of Theorem 2.1  will depend on a careful analysis of principal congruence subgroups.

 Let  $\calo$ be the  discrete valuation ring of $k$,\  $\pi$ a uniformizer and $\bbf$
  the residue field.  Let $\calg/\calo$ denote a smooth group scheme.  In particular, if $G$
  is a semisimple algebraic group over $k$ and $X$ the associated Bruhat-Tits building, then for every
   point $x$ of $X$, the stabilizer of $x$ in $G(k)$ is equal to $\calg(\calo)$ for some such $\calg$
[BT, 5.1.9].  For each positive integer $n$, \ $\calg(\calo/\pi^n\calo)$ is finite and the reduction map
$\calg(\calo) \to\calg(\calo/\pi^n\calo)$ is surjective (since $\calg$ is smooth).
The kernel
 $Q_n$ is called the  $n$-th {\it principal congruence subgroup}.  These subgroups are closely related to the Lie
  algebra $\call=Lie (\calg/\calo)$, which is by definition the dual of the pull-back of the relative cotangent
   bundle $\Omega^1_{\calg/\calo}$ by the identity section.
   As $\calo$-module, $\call$ is free of rank equal to the relative
   dimension of $\calg/\calo$. This construction commutes with base change [SGA3, II 4.11]; in particular, $\call\otimes k$
    and $\call\otimes \bbf$ are the Lie algebras of the generic and special fiber, respectively.
In the case that $\calg$ is Chevalley, i.e. split semisimple, and $\calo \simeq\bbf[[\pi]]$ then $\call$ is isomorphic to $(\call\otimes\bbf)[[\pi]]$.
If $a, b, \in \bbn$ with $a\le b\le 2a$, there exists a canonical isomorphism
\begin{equation*}
\log_{(\pi^a)/(\pi^b)}\colon Q_a/Q_b\to (\pi^a\calo/\pi^b\calo)\otimes \call
\end{equation*}
(see [P1, 6.2]).
If $c, d \in \bbn$ and $c\le d \le 2c$ the square
$$
\CD
Q_a\big/Q_b\times Q_c\big/Q_d  @>[\ , \ ]>> Q_{a+c}\big/Q_{a+d}\cdot Q_{b+c} \\
@V\log VV @ VV\log V \\
\pi^aL\big/\pi^bL\times \pi^cL\big/\pi^d L @> [\ , \ ]>>  \pi^{a+c}L\big/(\pi^{a+d}L+\pi^{b+c} L).\\
\endCD
$$
commutes.

We can  now begin the proof of Theorem 2.1, starting with  the lower bounds.  One can easily see that
 the principal congruence subgroups (with respect to any fixed faithful representation of $G$
 into $GL_n$)  described above assure  that the normal subgroup growth is always
  at least logarithmic.

The cases of interest for lower bounds are when either $p\big|\,|\calz|$ or $G$ is of  Ree type.

The reader should note that  these lower bounds arguments are complicated by the difficulty
 mentioned in the introduction (see also (1.2)) that obliges us to consider non-connected groups.

We will use the notation and terminology of [P1].  Replacing $G$ by a quotient we may assume that $G^\circ$
  is a product of isomorphic adjoint simple groups $G_i, i = 1, \dots, r$, that $G/G^\circ$ acts transitively on  the factors $G_i$,
   and that $Cent_G(G^\circ)$ is trivial.  Let $F=k^r$,  $M^\circ=M\cap G^\circ(k)$ and $\calg^\circ$ be the adjoint
   simple group scheme over $F$ such that $\calg^\circ (F)=G(k)$.
   Let $(E, \calh^\circ, \vp)$ be the  minimal
   quasi-model of $(F, \calg^\circ, M^\circ)$.  Thus $\calh^\circ$ is adjoint simple, $E$ is a closed subalgebra of $F$,
    which is a product of local fields,  $M^\circ$ can be regarded as a Zariski dense subgroup of $\calh^\circ(E)$
and $\vp:\calh^\circ\underset{E}{\times} F\to \calg^\circ$     is an isogeny, which induces an isomorphism
on the embeddings of $M^\circ$.  By the (essential) uniqueness of the minimal quasi-model
[P1, 3.6] and the transitivity
of the action of $M/M^\circ$ on the factors of $F$, we conclude that $M/M^\circ$ acts transitively
on the factors of $E$.  Furthermore, $E$ has the same number of factors as $F$ since $M^\circ$
 is Zariski dense in $G^\circ$.  So we write $E={k'}^r$ where $k'$ is a (local) subfield of $k$.  The restriction of scalars of $\calh^\circ$
  to $k'$ is a product $H^\circ$ of isomorphic adjoint simple factors $H_i, \; i = 1, \dots, r$ and $\vp$ is a
  product of identical isogenies $\vp_i\colon H_i\underset{k'}{\times} k\to G_i$.
Let $H^{out}=H^\circ \rtimes Out (H^\circ), \, \widetilde{H^{out}}=\widetilde{H^\circ} \rtimes
 Out (H^0)$, and $G^{out} = G^\circ\rtimes Out (G^\circ)$.  Note that $Out(G^\circ)$ and $Out (H^\circ)$ are canonically
  isomorphic.  In fact $G^\circ$ and $H^\circ$ have the same root system except possibly in characteristic
   2 where there exist isogenies between groups of type $B_n$ and $C_n$.  Even though $G$ need not map to $G^{out}$,
    if $\ol k$ is an algebraic closure of $k$, $G(\ol k)$ is naturally a finite index subgroup of
 $G^{out}(\ol k)$, and we regard $M$ as a subgroup of the latter.
We extend $\vp$ to $\vp^{out}\colon H^{out}\underset{k'}{\times}
k \to G^{out}$ to identify $G^{out} (\ol k)$ with $H^{out} (\ol k)$.
Finally, we pull back by the natural map
$\widetilde{H^{out}}\to H^{out}$ to obtain a group $\tilde M \le \widetilde{H^{out}}
(\ol k)$.  As $M$ is a quotient of $\tilde M$ by a finite normal subgroup, it suffices by (1.1) to give a lower bound for $t_n(\tilde M)$.

Let $\tilde M^\circ = \tilde M \cap \tilde H^\circ (k')$.  By [P1, 0.2], $\tilde M^\circ$
is an open subgroup of $\tilde H^\circ(k')$.  Let $\pi$ denote the universal cover map $\tilde H^\circ \to H^\circ$.
Then $H^\circ (k') / \pi\big(\tilde H^\circ (k')\big)$ is an abelian torsion group.
As $\tilde M$ is finitely generated, the image of $\tilde M\cap \tilde H^\circ(\ol k)$
in this quotient is finite, so $\tilde M^\circ$ is a finite index subgroup of $\tilde M\cap \tilde H^\circ (\ol k)$
and therefore
 of $\tilde M^\circ$.  As $M$ normalizes $M^\circ$, a Zariski dense subgroup of
 $H^\circ$, it normalizes $H^\circ$ itself and likewise $H^\circ(k')$; by the same argument $\tilde M$
 normalizes $\tilde H^\circ(k')$.  We conclude that $\tilde M^\circ$ is normal in $\tilde M$.

As $\tilde M$ is compact, its conjugation
 action fixes a point $x$ in the Bruhat-Tits building of $\widetilde{H^\circ}(k')$.
 Let $\calf$ be the smooth group scheme over $\calo$ corresponding to $x$.  As $\tilde M$ fixes $x$, conjugation by
  any element of $\tilde M$ gives an automorphism of $\calf$.  Let $Q_n$ denote the $n$-th principal congruence subgroup
   of $\calf(\calo)$.    By construction, it is normalized by $\tilde M$.

    Let $\calz$ denote the identity component of the scheme theoretic center of $\widetilde{H^\circ}$ and $\calz_\calf$
     the Zariski closure of $\calz$ in $\calf$.  Note that for any $\calo$-algebra $R$, $\calz_\calf(R)$ lies
      in the center of $\calf(R)$.
\begin{lem} Let $\calo=\bbf[[\pi]], \; \Delta$ a finite group and $B$ a commutative $\calo[\Delta]$-algebra which
 is finite as a module.
Suppose that the nil radial $I$ of $B\underset{\calo}{\otimes} k$ is non-trivial and $\dim_k(B\underset{\calo}{\otimes}k)/I =1$.
Then there exists a non-trivial $\bbf[\Delta]$-module $T$ and a positive integer $\gamma$
 such that for every $n\in\bbn$, $Hom_{ring}(B,\calo/\pi^{2\gamma n}\calo)$ contains $T^n$ as a submodule.
\end{lem}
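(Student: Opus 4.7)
The plan is to realize the desired copies of $T^n$ as lifts of a canonical augmentation $\epsilon\colon B\twoheadrightarrow\calo$, identified via K\"ahler differentials with an explicit linear module that can be analyzed.

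First I would extract the augmentation. Since $B\otimes_\calo k$ is a local Artinian $k$-algebra with residue field $k$, the composition $B\to B\otimes k\twoheadrightarrow(B\otimes k)/I=k$ has image a finitely generated $\calo$-subalgebra of $k$, hence integral over $\calo$. Because $\calo$ is integrally closed in $k$, this image is exactly $\calo$, and we obtain a surjection $\epsilon\colon B\twoheadrightarrow\calo$. Its kernel $\tilde I$ is $\Delta$-stable (since $I$ is canonical), so $\epsilon$ is $\Delta$-equivariant, and the sequence $0\to\tilde I\to B\to\calo\to 0$ splits as $\calo$-modules because $\calo$ is projective. Nakayama in $B\otimes k$ (applied to $I\neq 0$) gives $I/I^2\neq 0$, and since $I/I^2=(\tilde I/\tilde I^2)\otimes_\calo k$, the torsion-free quotient $M$ of $\tilde I/\tilde I^2$ is a $\Delta$-stable free $\calo$-module of some rank $s\geq 1$.

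Next I would identify ring-homomorphism lifts with linear data. For each $m$, the fiber of $\mathrm{Hom}_{\mathrm{ring}}(B,\calo/\pi^{2m}\calo)\to\mathrm{Hom}_{\mathrm{ring}}(B,\calo/\pi^m\calo)$ over $\epsilon\bmod\pi^m$ is a torsor under $\mathrm{Der}_\calo(B,\pi^m\calo/\pi^{2m}\calo)$ (as this ideal is square-zero), and $\epsilon\bmod\pi^{2m}$ identifies this torsor $\Delta$-equivariantly with the derivation group. The standard K\"ahler computation $\Omega^1_{B/\calo}\otimes_B\calo=\tilde I/\tilde I^2$ (valid thanks to the retraction $\calo\hookrightarrow B$) then yields
\begin{equation*}
\mathrm{Der}_\calo(B,\pi^m\calo/\pi^{2m}\calo)=\mathrm{Hom}_\calo(\tilde I/\tilde I^2,\pi^m\calo/\pi^{2m}\calo),
\end{equation*}
and pulling back along $\tilde I/\tilde I^2\twoheadrightarrow M$ gives a $\Delta$-equivariant inclusion $M^*/\pi^m M^*\hookrightarrow\mathrm{Hom}_\calo(\tilde I/\tilde I^2,\pi^m\calo/\pi^{2m}\calo)$. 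It therefore suffices to find $T^n$ as a $\bbf[\Delta]$-submodule of $M^*/\pi^{\gamma n}M^*$.

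The module $M^*/\pi^{\gamma n}M^*$ carries a $\Delta$-stable $\pi$-adic filtration whose $\gamma n$ successive quotients are all isomorphic to the non-zero $\bbf[\Delta]$-module $V^*:=(M/\pi M)^*$; choose $T\subseteq V^*$ a non-zero simple submodule. When $p\nmid|\Delta|$, the $\bbf[\Delta]$-module category is semisimple, so the filtration splits as $M^*/\pi^m M^*\cong(V^*)^m\supseteq T^m$ and $\gamma=1$ already works. The main obstacle is the wild case $p\mid|\Delta|$, where the extensions in the filtration need not split. There I would first choose $\gamma$ large enough that a $\bbf[\Delta]$-equivariant lift $T'\subseteq M^*/\pi^{\gamma n}M^*$ of $T\subseteq V^*$ exists; this is possible because the lifting obstruction lives in a finite-dimensional Ext group that can be absorbed by enlarging $\gamma$ by a fixed amount depending only on the $\calo[\Delta]$-module structure of $M$. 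Since multiplication by $\pi$ on $M^*/\pi^{\gamma n}M^*$ is $\Delta$-equivariant, the successive translates $T',\pi T',\pi^2 T',\ldots,\pi^{n-1}T'$ lie in disjoint successive layers of the $\pi$-adic filtration and hence form an internal direct sum isomorphic to $T^n$, yielding the required $\bbf[\Delta]$-submodule.
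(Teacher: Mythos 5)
Your setup (augmentation $\epsilon\colon B\twoheadrightarrow\calo$, identification $\mathrm{Der}_\calo(B,\pi^m\calo/\pi^{2m}\calo)=\mathrm{Hom}_\calo(\tilde I/\tilde I^2,\pi^m\calo/\pi^{2m}\calo)$ via the conormal module, passage to $M^*/\pi^{\gamma n}M^*$) is essentially the same as the paper's, which achieves it more directly by splitting $B=\calo\oplus S$ as $\calo[\Delta]$-modules (your $\tilde I$ is exactly the $\pi$-saturation $S$ of the nilradical) and noting that every $\calo$-linear map $S/S^2\to\pi^{\gamma n}\calo/\pi^{2\gamma n}\calo$ yields a ring homomorphism $B\to\calo/\pi^{2\gamma n}\calo$. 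So far there is no real divergence.

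The gap is in the last step. You choose a simple $T\subseteq V^*=M^*/\pi M^*$ and claim that, after enlarging $\gamma$, a $\bbf[\Delta]$-equivariant lift $T'\subseteq M^*/\pi^{\gamma n}M^*$ of $T$ exists because ``the lifting obstruction lives in a finite-dimensional Ext group that can be absorbed by enlarging $\gamma$.'' That mechanism does not work. The set of lifts of $T$ to $M^*/\pi^m M^*$, if nonempty, is a finite torsor, so by an inverse-limit argument a lift exists for all $m$ if and only if there is an $\bbf[\Delta]$-submodule of $M^*$ itself mapping isomorphically onto $T$; enlarging $\gamma$ can only make lifting \emph{harder}, never easier, and the first obstruction (from $M^*/\pi M^*$ to $M^*/\pi^2 M^*$) can already be nonzero. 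Concretely, take $\Delta=\bbz/p$, $\calo=\bbf_p[[\pi]]$, $M^*=\calo^2$ with the generator acting by $\left(\begin{smallmatrix}1&\pi\\0&1\end{smallmatrix}\right)$; here $V^*=\bbf_p^2$ has trivial $\Delta$-action, and for $T=\bbf_p e_2$ the equation $\delta\tau=\tau$ in $\calo^2/\pi^m$ forces the second coordinate into $\pi^{m-1}\calo/\pi^m\calo$, so $T$ has no lift to $M^*/\pi^2 M^*$, let alone $M^*/\pi^{\gamma n}M^*$. (This $M^*$ does arise from a legal $B$, e.g.\ $B=\calo\oplus S$ with $S^2=0$ and $S$ the dual module.) So as written, the argument breaks.

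The paper avoids the problem by not trying to lift anything from $V^*$. It picks any $v\in N\setminus\pi N$ (with $N=M^*$), forms the cyclic $\bbf[\Delta]$-module $T:=\bbf[\Delta]v\subseteq N$ --- which is a finite $\bbf$-subspace of $N$ but need \emph{not} inject into $V^*$ --- and chooses $\gamma$ so large that $T\setminus\{0\}\subseteq N\setminus\pi^\gamma N$, which is possible because $T$ is finite and $\bigcap_j\pi^j N=0$. Then $T,\pi^\gamma T,\ldots,\pi^{\gamma(n-1)}T$ sit in disjoint layers of the $\pi^\gamma$-adic (not $\pi$-adic) filtration of $N/\pi^{\gamma n}N$, giving $T^n$ with no lifting issue at all. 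In my counterexample the paper's $T=\bbf[\Delta]v$ for $v=e_2$ is the two-dimensional indecomposable $\bbf_p(0,1)\oplus\bbf_p(\pi,0)$, visibly not a submodule of $V^*$. If you replace your ``simple submodule of $V^*$'' with this cyclic module in $N$ and work with the $\pi^\gamma$-filtration, your argument goes through and coincides with the paper's.
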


We apply the lemma in the case B is the coordinate ring of $\calz_\calf$ and $\Delta=\tilde M/(\widetilde{H^\circ}(k')\cap\tilde M)$;
 note that $Hom_{ring}
 (B, \calo/\pi^{2\gamma n}\calo)$ and hence $T^n$ sits as a subgroup of
 $\tilde M/Q_{2\gamma n}$ for $n$
 sufficiently large.  Now, as $T^n\simeq T\otimes (\calo/(\pi))^n$ as $\Delta$-modules, we can deduce
  that $\tilde M$ has at least $p^{\frac14 n^2}$ normal subgroups of index at most
  $\big|\tilde M/Q_{2\gamma n}\big|\sim
  c_0^{2\gamma n}$ for some constant $ c_0$.  This finishes the proof of the lower bound modulo the lemma.
  \vskip.2cm
\noindent
{\it Proof of Lemma 2.3}  Let $J$ be the nil radical of $B$ and $S=\bigcup\limits^\infty_{r=1} \{ x \in B\big| \pi^r x \in J\}$
 its $\pi$-saturation.  Then $B/S$ is a finitely generated torsion free $\calo$-module, hence free
  over $\calo$, and the codimension condition on $I=S\underset{\calo}{\otimes} k$
  implies that the rank of $B/S$ is  $1$.
  Thus $B=\calo\oplus S$ as $\calo$-module.  Any $\calo$-linear map from $S/S^2$
  to $\pi^{\gamma n}\calo/\pi^{2\gamma n}\calo$ gives
an $\calo$-algebra homomorphism $B\to \calo/\pi^{2\gamma n}\calo$.  By Nakayama's Lemma,
$(S/S^2)\underset{\calo}{\otimes} k = I/I^2\neq (0)$, so the free $\calo$-module $(S/S^2)/(S/S^2)_{tor}$ is a non-zero
$\calo[\Delta]$-module and $N=Hom_\calo(S/S^2, \calo)
$ is a non-zero
$\calo[\Delta]$-module
which is
$\calo$-free.
It suffices to find an $\bbf[\Delta]$-module appearing with multiplicity at least $n$ in $N/\pi^{\gamma n}N$.
Choose $v \in N\setminus \pi N$ and then choose
 $\gamma$ such that $\bbf[\Delta]v\setminus \{ 0 \} \subseteq N\setminus \pi^\gamma N$.  Then
\begin{equation*}
\bbf[\Delta]\{v,\pi^\gamma v, \dots, \pi^{\gamma(n-1)}v\}\simeq \mathop{\oplus}^{n-1}_{i=0} \bbf[\Delta]
 \pi^{i\gamma}v\simeq\mathop{\oplus}^{n-1}_{i=0} \bbf[\Delta]v.
 \end{equation*}
 This proves the lemma with $T=\bbf[\Delta]v$.
 \hfill $\square$
\vskip.2cm

 We turn now to proving the lower bound for the Ree cases.
 \begin{lem}  Assume $char(k) = 2 $ (resp. 3) and $G^\circ$ has at least one factor of type $F_4$ (resp. $G_2$).
 If $M$ is a compact open subgroup of $G(k)$ then $t_n(M)$ has growth rate at least $n$.
 \end{lem}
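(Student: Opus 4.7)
The plan is to use the Lie-algebra ideal provided by Proposition 1.8 to produce polynomially many pairwise distinct normal subgroups of $\tilde M$ of index at most $n$. Since the paper's definition of $\succeq$ only demands a positive exponent $a$ with $t_n(\tilde M)\ge n^a$, this is exactly $t_n(\tilde M)\succeq n$, and Proposition 1.1 transfers the bound to $M$.

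I would reuse the setup already established for the case $p\mid|\calz|$: pass to $\tilde M \le \widetilde{H^{out}}(\bar k)$, pick a fixed point $x$ of $\tilde M$ in the Bruhat--Tits building of $\widetilde{H^\circ}(k')$, and let $\calf/\calo$ be the associated smooth model. Then the principal congruence subgroups $Q_n$ of $\calf(\calo)$ are all normalized by $\tilde M$, and the log map gives an $\tilde M$-equivariant isomorphism $Q_a/Q_{a+1}\cong L$ (with $L=Lie(\calf)\otimes\bbf$) compatibly with the bracket identity $[Q_i,Q_j]\subseteq Q_{i+j}$. By hypothesis every simple factor of $H^\circ$ is of Ree type; since the outer automorphism schemes of $F_4$ and $G_2$ are trivial, $\tilde M/\tilde M^\circ$ only permutes the isomorphic simple factors. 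Applying Proposition 1.8 factorwise yields an $\tilde M$-invariant Lie ideal $I\subset L$ with $\dim_\bbf I=\frac12\dim_\bbf L$ and $L/I\cong I$.

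For each integer $a\ge 2$ and each subset $S\subseteq\{\lceil a/2\rceil,\dots,a-1\}$ I would define $N_S\tle \tilde M$ with $Q_a\subseteq N_S$ by specifying its layers: $(N_S\cap Q_i)/(N_S\cap Q_{i+1})$ equals $I$ if $i\in S$ and $0$ if $i\in\{1,\dots,a-1\}\setminus S$. The essential point that makes $N_S$ a genuine group is that whenever $i,j\in S$ one has $i+j\ge a$, so the Lie bracket of the two layers lands in $Q_{i+j}\subseteq Q_a\subseteq N_S$ and the compatibility required by the Baker--Campbell--Hausdorff-type filtration is automatic; for pairs where one layer is zero there is nothing to check. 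Hence $N_S$ is a well-defined subgroup, normal in $\tilde M$ because the layers are $\tilde M$-invariant, and distinct subsets $S$ give distinct $N_S$. Each satisfies $[\tilde M:N_S]\le[\tilde M:Q_a]\le c_0 q^{a\dim L}$ with $q=|\bbf|$. Choosing the largest $a=\Theta(\log n)$ for which this is at most $n$ produces $2^{\lfloor a/2\rfloor}=n^\alpha$ distinct $N_S$ of index $\le n$, for some $\alpha=(\log 2)/(2\dim L\cdot\log q)>0$, proving $t_n(\tilde M)\succeq n$.

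The main technical obstacle is to secure both the $\tilde M$-invariance of $I$ and the $\tilde M$-equivariance of the log identification in the non-connected setting; the trivial outer automorphism schemes of $F_4$ and $G_2$, together with the fact that $\tilde M$ fixes $x$ (and therefore acts on $\calf$ by $\calo$-algebra automorphisms), take care of both. A secondary point is the possible finite-index discrepancy between $\calf(\calo)\cap\tilde M$ and $\tilde M$ itself, which contributes only a bounded factor absorbed into $c_0$ and does not affect the growth type by Proposition 1.2.
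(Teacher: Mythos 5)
Your construction of the subgroups $N_S$ is where the argument breaks down, for a structural reason. Specifying the graded layers $(N_S\cap Q_i)/(N_S\cap Q_{i+1})$ does not determine a subgroup, and, more fatally, the profiles you propose are almost never realizable. For any normal open subgroup $N$ with $Q_a\subseteq N\subseteq Q_{\lceil a/2\rceil}$, the logarithm identifies $N/Q_a$ with an $\calo$-submodule $V\subseteq\pi^{\lceil a/2\rceil}\call/\pi^a\call$; since $\pi V\subseteq V$, multiplication by $\pi$ carries the image of $\mathrm{gr}_i(V)$ in $L$ injectively into the image of $\mathrm{gr}_{i+1}(V)$, so the graded pieces form an \emph{increasing} chain of subspaces of $L$. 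Hence once $\mathrm{gr}_i(V)=I$ for some $i$, every subsequent layer must contain $I$, and the only subsets $S\subseteq\{\lceil a/2\rceil,\dots,a-1\}$ compatible with your prescription are the tails $\{j,j+1,\dots,a-1\}$. There are $O(a)=O(\log n)$ of those, not $2^{\lfloor a/2\rfloor}$, so even if uniqueness and normality of $N_S$ were not an issue you would only recover $t_n(\tilde M)\succeq\log n$, which is the generic estimate, not the linear bound you need.

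The ingredient you left unused is precisely what the paper relies on: Proposition 1.8 gives not merely the ideal $I$ but an isomorphism $\psi\colon L/I\to I$ of Lie algebras and $G$-modules. The paper fixes $\psi$ with $\psi(\call/(I\cap\call))\subseteq\call$ and, for every polynomial $q$ of degree $<d$ over the field of constants, forms the twisted-graph module $N_{d,q}=\{\pi^{2d}\psi(\alpha)+\pi^{3d}q(\pi)\alpha\}\subseteq\pi^{2d}\call/\pi^{4d}\call$. Because $\psi$ is $\call$-equivariant, each $N_{d,q}$ is an $\call$-submodule, pairwise distinct for distinct $q$; pulling back by $\log_{(\pi^{2d})/(\pi^{4d})}$ and invoking [P1, 6.2(c)] gives $|\bbf|^d$ open normal subgroups of $M$ of index at most $[M\colon Q_{4d}]\sim c_2^d$, hence linear growth. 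The point is that these $N_{d,q}$ all have essentially the same graded profile; the exponential multiplicity comes from varying the $\psi$-graph by the polynomial $q$, not from varying the graded type. Without exploiting $\psi$ in this way, the counting cannot get past $\log n$.
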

 \begin{proof}  Without loss of generality we may assume
that $G^\circ$ is isotypic and $M/M\cap G^\circ (k)$ acts transitively on the factors.  We claim that
every $k$-automorphism of a simple group $H$ which is both adjoint and simply connected is an inner automorphism
 by an element of $H(k)$.
 Indeed,  Dynkin diagrams of  adjoint simply connected simple groups
 have no symmetries, so every
  automorphism is of the form $ad(x) $ for $x \in H(\ol k)$;
  as $ad(x)$ is a $k$-automorphism of the Lie
  algebra of $H$ and the adjoint representation
   is faithful, this implies
   $x \in H(k)\subset GL_{\dim (H)}(k)$.
   We can therefore write $G=G^\circ \rtimes M/M^\circ$.

 As before we may assume that the image $\Delta$ of $M$ in $Out (G^\circ)$ acts transitively
  on the factors.  The projection of $M\cap G^\circ(k)$ to each factor is then the same: a compact open
  subgroup $C$ of $F_4(k)$ (resp. $G_2 (k)$).
  Let $\calh$ denote a smooth group scheme  over $\calo$ such that $C$ lies in $\calh (\calo)$ as an open subgroup.
  Thus $M$ is contained in $\calh(\calo)^r\rtimes G/G^\circ$ and by Propositions 1.1 and 1.2, we can assume
   $M=\calh(\calo)^r\rtimes \Delta$.
   If $N\in T_n \big(\calh(\calo)\big)$,  then $N^r$ is an open
    normal subgroup of $M$ of index $\le | \Delta |n^r$, so without loss of generality we may assume that $G=G^\circ$ is simple and $M=\calh(\calo)$.
    Let $\call=Lie
    (\calh/\calo)$.  By [1.8], $\call\otimes k$ has a unique proper ideal $I$,
     and $(\call\otimes k)/I$ is isomorphic as $\call\otimes k$ module to $I$.  We fix an isomorphism
     $\psi\colon (\call\otimes k)/I \to I$ such that $\psi(\call/I\cap \call)\subseteq \call$.

     Let $d$ be a positive integer and $q(x)$ a polynomial of degree less than $d$ with coefficients in the
      field of constants of $k$.  Let
\begin{equation*}
N_{d, q}=\{\pi^{2d}\psi(\a) +\pi^{3d}q(\pi)\a\big|\a\in \call/\pi^{2d}\call\}\subseteq
\pi^{2d}\call/\pi^{4d}\call
\end{equation*}
For fixed $d$ and varying $q$, we obtain a family of pairwise distinct $\call$-submodules of $\pi^{2d}\call/\pi^{4d}\call$ of cardinality
 exponential in $d$.  Pulling back by the map $\log_{(\pi^{2d})/(\pi^{4d})}$ we obtain
  subgroups of $Q_{2d}/Q_{4d}$ and therefore open subgroups of $M$.  By [P1, 6.2(c)] these subgroups
  are normal. So we have exhibited $c^d_1$ normal open subgroups of $M$ of index at most
  $[M\colon Q_{4d}]\sim c^d_2$.  This finishes the proof of the proposition. We have therefore
  proved the lower bounds in all cases of Theorem 2.1.
\end{proof}

We turn now to the proof of the upper bounds.  First note
that the upper bound for case (i) follows immediately from (1.6), as $n^{\log n}$ is the maximal normal
subgroup growth rate possible for finitely generated groups.

We now turn to the proof of the upper bound in the remaining cases.  We have already seen that
without loss of generality we may assume that $M$ contains an open subgroup which is also an open
 subgroup of the $k$-points of a connected, simply connected semisimple group.  For upper bounds, we are free to pass to
 subgroups of finite index (see 1.2)), so we can assume from now on that $M$ is open in $G(k)$.  Let us
 start with the generic case.
\begin{prop}
Let $G/k$ be a simple algebraic group not of Ree type and for which $p\nmid |\calz(\tilde G)|$.
Let $M\le G(k)$ be a compact open subgroup.  Then

\bigskip

{\rm (i)} \ there exists a constant $c$ such that for every
$N$ open and normal in $M$, $|Z(M/N)|< c$;

\bigskip

{\rm (ii)} \  $t_n(M) \preceq\log n$.
\end{prop}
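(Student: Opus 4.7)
The approach hinges on Proposition~1.8: under our hypotheses ($G$ not of Ree type and $p\nmid|\calz(\tilde G)|$), the Lie algebra $L=\mathrm{Lie}(G)$ is an irreducible $G$-module, so reducing modulo the uniformizer the $M$-module $L\otimes\bbf$ is also irreducible. Moreover, $p\nmid|\calz|$ makes the scheme-theoretic center $\calz=\calz(\calf)$ of a smooth $\calo$-model $\calf$ of $G$ \'etale, so $|\calz(\calo)|=|\calz(\bbf)|\le|\calz|$ and $\calz(\calo)\cap Q_b=1$ for all $b\ge 1$. By Bruhat--Tits I take such an $\calf$ with $M$ open in $\calf(\calo)$; the principal congruence subgroups $Q_a$ and the log-isomorphisms $Q_{a-1}/Q_a\cong L\otimes\bbf$ (as $M$-modules) organize the analysis.

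For (i), given an open normal $N\tle M$, let $a$ be minimal with $Q_a\subseteq N$. By minimality, $(N\cap Q_{a-1})/Q_a$ is a proper $M$-submodule of the irreducible $L\otimes\bbf$, hence zero, so $N\cap Q_{a-1}=Q_a$. Any $m\in M$ with $mN\in Z(M/N)$ then satisfies $[m,Q_{a-1}]\subseteq N\cap Q_{a-1}=Q_a$, so $m$ acts trivially on $L\otimes\bbf$ via the adjoint representation. Since $\ker(\mathrm{Ad}\colon\calf\to GL(L))=\calz$, the image of $Z(M/N)$ in $M/Q_1\subseteq\calf(\bbf)$ lies in $\calz(\bbf)$, of order $\le|\calz|$. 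Iterating the same argument inside each $Q_b$ (using $\calz\cap Q_b=1$) rules out additional contributions from deeper levels, producing the uniform bound $|Z(M/N)|\le|\calz|$.

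For (ii), the identity $N\cap Q_{a-1}=Q_a$ yields $[M:N]\ge[Q_{a-1}:Q_a]$, which grows like $|\bbf|^{a\dim L}$, forcing $a=O(\log n)$ when $[M:N]\le n$. It then suffices to bound, independently of $a$, the number of normal subgroups of depth exactly $a$. The commutator bound $[N,Q_{a-1}]\subseteq N\cap Q_{a-1}=Q_a$ (together with $[Q_{a-1},Q_{a-1}]\subseteq Q_a$ for $a\ge 2$) makes $Q_{a-1}/Q_a$ central in $(NQ_{a-1})/Q_a$ and exhibits $N/Q_a$ as an $M$-normal complement to this central factor. Two complements with the same image in $M/Q_{a-1}$ differ by an $M$-equivariant homomorphism $\bar N\to L\otimes\bbf$; each non-zero such homomorphism produces a distinct normal copy of $L\otimes\bbf$ inside the quotient $M/(N\cap N')$, and part~(i) applied to $M/(N\cap N')$ bounds the number of these copies by a constant depending only on $|\calz|$ and $\dim L$. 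Combining $O(1)$ choices per depth with the $O(\log n)$ admissible depths gives $t_n(M)\preceq\log n$. The most delicate step is this last translation, turning the space of equivariant homomorphisms into bounded central data accessible to (i); the iteration inside the filtration in (i), ensuring that the bound on $|Z(M/N)|$ does not compound as we descend the levels, is also somewhat subtle.
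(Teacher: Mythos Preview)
Your approach has real gaps, and it diverges from the paper's in a way that creates them.

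\textbf{Irreducibility over the residue field is not given.} You assert that because $L=\mathrm{Lie}(G)$ is $G$-irreducible over $k$, its reduction $\call\otimes\bbf$ is an irreducible $M$-module. This does not follow: the reduction depends on the smooth $\calo$-model $\calf$, and the special fibre of a Bruhat--Tits model need not be simple (or even reductive), so its Lie algebra need not be irreducible. Proposition~1.8 is a statement about simple algebraic groups over a field; it applies to the generic fibre, not automatically to the special one. The paper avoids this entirely by using only the simplicity of $L$ over $k$: it fixes $\ell$ so that the $\ell$-fold iterated bracket $[L,[L,\ldots,[L,x]\ldots]]=L$ for every nonzero $x\in L$, then uses compactness to get a uniform integer $n$ with $[\call,[\call,\ldots,[\call,x]\ldots]]\supseteq\pi^{n}\call$ for all $x\in\call\setminus\pi\call$. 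Via the $\log$-map this yields: the normal closure in $M=Q_m$ of any $y\in Q_r\setminus Q_{r+1}$ contains $Q_{r+c}$ for a fixed constant $c$. Hence every open normal $N$ satisfies $Q_t\subseteq N\subseteq Q_{t-c'}$ for some $t$, and both (i) and (ii) fall out immediately.

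\textbf{The index estimate in (ii) is wrong.} You write that $N\cap Q_{a-1}=Q_a$ gives $[M:N]\ge[Q_{a-1}:Q_a]$, ``which grows like $|\bbf|^{a\dim L}$''. But $[Q_{a-1}:Q_a]=|\bbf|^{\dim L}$ is a constant independent of $a$, so this does not force $a=O(\log n)$. To get $a=O(\log n)$ you would need something like $N\cap Q_1=Q_a$ (equivalently $N\subseteq Q_{a-c}$), and that requires propagating information across levels, e.g.\ via perfectness of $\call\otimes\bbf$ and the commutator identity $[Q_1,Q_b]\cdot Q_{b+2}=Q_{b+1}$. None of this is in your write-up, and again it hinges on properties of the special fibre that are not guaranteed.

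\textbf{The ``iterating'' step in (i) does not work as stated.} Your argument shows that if $mN$ is central then $m$ acts trivially on $Q_{a-1}/Q_a$, hence its image in $\calf(\bbf)$ lies in $\calz(\bbf)$. But every element of $Q_1$ already acts trivially on $Q_{a-1}/Q_a$ (since $[Q_1,Q_{a-1}]\subseteq Q_a$), so this gives no constraint on $Z(M/N)\cap(Q_1/N)$. To handle $m\in Q_b\setminus Q_{b+1}$ with $1\le b<a$ you would have to look at $[m,Q_{a-1-b}]\subseteq N\cap Q_{a-1}=Q_a$ and deduce that $\bar m$ lies in the centre of the Lie algebra $\call\otimes\bbf$; this needs $Z(\call\otimes\bbf)=0$, which again is a special-fibre hypothesis you have not secured. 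The paper's sandwich $Q_t\subseteq N\subseteq Q_{t-c'}$ gives $Z(M/N)\subseteq Q_{t-c'-m}/N$, of bounded order, with no such hypothesis.
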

\begin{proof}  Let $\calg/\calo$ be a smooth group scheme with generic fiber $G$ and $M \subseteq
\calg(\calo)$.  Let $\call=Lie(\calg/\calo)$.  By (1.8), $L = \call\otimes k$ is a simple Lie algebra,
 so there exists $\ell \in \bbn$ such that the $\ell$-th iterated Lie bracket $[L, [L, [\dots, [L, x]]=L$
  for all non-zero $x\in L$.  Thus $[\call, [\call, [\dots, [\call, x]]$ contains an open neighborhood
  $\pi^{n_x}\call$ of zero for all $x \in \call\setminus \{ 0 \}$.    By compactness, there is a
  uniform upper bound $n$ on $n_x$ as $x$ ranges over $\call\setminus \pi\call$.

  Replacing $M$ if necessary by a finite index subgroup, we may assume $M = Q_m$, for some $m>n$.
  If $\bbn \ni r\ge m$ and $x\in \pi^r\call\setminus \pi^{r+1} \call$  then for every $i \in \bbn$,
  \begin{equation*}
  [\pi^i\calt,[\calt, \dots, [\calt, \ol x]\dots] \supseteq\pi^{i+\ell m+r+n}\call/\pi^{(i+\ell+1)m+r}\call
  \end{equation*}
where $\ol x$ is the projection of $x$ to $\pi^r\call / \pi^{r+m} \call$, $ \calt = \pi^m\call/\pi^{2m}\call$ and the bracket
is taken $\ell $ times.  (We are using the diagram, introduced earlier, computing
 brackets of quotients of principal congruence subgroups).
Thus the topological normal closure of every element $y \in Q_r\setminus Q_{r+1}$ contains representatives of every class
 in $Q_{i+(\ell+1)m+r-1}\big/Q_{i+(\ell+1)m+r}$.
 So this normal subgroup contains $Q_{(\ell + 1) m+r-1}$.

 This shows that if $N$ is a normal open subgroup of $M$ containing  an element outside $Q_{r+1}$, then $N$
  contains $Q_{(\ell + 1) m+r-1}$.  As $\ell$ and $m$ are constants,  this shows that there exists a constant $c'$ such that for every normal
  subgroup $N$ of $M$, there exists $n \in \bbn, \; n = O(\log [M\colon N])$, such that
  $Q_n\subseteq N\subseteq Q_{n-c'}$.  The order of $Q_{n-c'}/Q_n$ is bounded, so for every $n$, there
   are only finitely many  such possibilities for  $N$.  This proves that the normal subgroup growth of $M$ is at most
   logarithmic.    We also see that the center of $Q_m/N$ is included
   in $Q_{n-c'-m}/N$, so its order is bounded.
\end{proof}

Before passing to the semisimple case,  let us first consider the Ree case for simple groups.  We can then treat the semisimple
case uniformly.
\begin{prop}  Let $G/k$ be a simple group of Ree type and $M\subset G(k)$ an open compact subgroup.
Then

{\rm (i)} \ There exists a constant $c$ such that for every open normal subgroup $N$ of $M$, $|Z(M/N)| <c$.

{\rm (ii)} \  $t_n(M) \preccurlyeq n$.

{\rm (iii)} \ There exists a universal constant $\gamma$, independent of $k$, such that if $M$
 is hyperspecial, then for every normal subgroup $N$ of $M$, $Z(M/N)=\{1\} $ and  $t_n(M) \le n^\gamma$ for every $n$.
\vskip.2cm
\noindent
\end{prop}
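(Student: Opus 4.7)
My plan parallels Proposition~2.5 but must accommodate the unique proper ideal $I \triangleleft L$ guaranteed by Proposition~1.8. By Propositions~1.1 and~1.2 I may replace $M$ with a finite index subgroup and quotient by a finite normal subgroup, and so would assume $M = Q_m$, the $m$-th principal congruence subgroup of $\calg(\calo)$, where $\calg/\calo$ is a smooth group scheme with generic fiber $G$, and $m$ is chosen large enough that the commutator diagram of \S2 yields clean Lie-theoretic identifications.

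For an open normal $N \triangleleft M$, I would study the associated graded $N_j := (N\cap Q_j)Q_{j+1}/Q_{j+1}$ inside $V_j := Q_j/Q_{j+1}$, identified via $\log$ with $\pi^j \call/\pi^{j+1}\call \cong V$, where $V := \call\otimes\bbf$. Normality of $N$ together with the commutator diagram forces each $N_j$ to be stable under the adjoint action of $V$, hence an ideal of the $\bbf$-Lie algebra $V$; by Proposition~1.8 the only such ideals are $0$, $I$, and $V$. The pivotal bracket-theoretic step is then to show, using the Ree isomorphism $\psi\colon L/I \to I$, that once $N_j \ne 0$ for some $j$ one has $N_{j'} \supseteq I$ for all $j' \ge j + c_1$, and that if additionally $N_j \not\subseteq I$ then $N_{j'} = V$ for all $j' \ge j + c_2$, with $c_1, c_2$ absolute constants depending only on the root datum. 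Setting $r(N) := \min\{j : N_j \ne V\}$, this would yield $N \supseteq Q_{r(N)+c_2}$ and hence $r(N) = \Theta(\log_{|\bbf|} [M:N])$.

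Then I would count: there are at most $3^{c_2}$ shape patterns $(N_j)_{r(N)\le j < r(N)+c_2}$, and within each shape the actual $N$ is determined by its image in the finite quotient $Q_{r(N)}/Q_{r(N)+c_2}$, where the admissible ``Ree twist'' data (the analogue of the polynomials $q(\pi)$ appearing in Lemma~2.4) lies in an $\bbf$-vector space of dimension $O(1)$ per level, giving at most $|\bbf|^{O(c_2)}$ lifts per shape. Summing over shapes and lifts yields $t_n(M) \le C\, n^\gamma$, proving~(ii). For~(i), since both $I$ and $V/I$ are simple Lie algebras over $\bbf$, $V$ is perfect, so any $x \in M$ whose commutator with every element of $M$ lies in $N$ must sit in $Q_{r(N)-c_3}$ for some constant $c_3$; thus $Z(M/N)$ embeds into $Q_{r(N)-c_3}/Q_{r(N)+c_2}$ modulo bounded lift ambiguity, giving a bound independent of $N$.

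For~(iii), if $M$ is hyperspecial then $\calg$ is reductive over $\calo$ and the constants $c_1, c_2, c_3, \gamma$ above depend only on the root datum of $G$, not on $k$; moreover $\calg(\bbf)$ has trivial center in this adjoint simply connected setting, which forces $Z(M/N) = 1$ after unwinding the centralizer computation. I expect the main obstacle to be the bracket-theoretic step: the constants $c_1$ and $c_2$ must be extracted by carefully tracking how the Ree isomorphism $\psi$ obstructs the iterated commutator from recovering all of $V$ in a single stroke, yet still forces full recovery after bounded depth. This is precisely what separates the linear growth rate of Ree type from the logarithmic rate of the generic case in Proposition~2.5, and matches the lower bound already established in Lemma~2.4.
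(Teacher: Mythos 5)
There is a genuine gap, and it sits exactly at what you call the ``pivotal bracket-theoretic step.'' You claim that once a graded piece $N_j$ is nonzero, the pieces become all of $V=\ol L$ within a bounded number $c_2$ of levels, so that $N\supseteq Q_{r(N)+c_2}$ and each normal subgroup is confined to a window of bounded width. That is precisely the structure of the \emph{generic} case (Proposition 2.5), and it fails for Ree type: since $I$ is an ideal, the iterated bracket operator stalls at $I$ (the minimal ideal containing $I$ is $I$ itself), so normality only forces $gr_tN\supseteq C(gr_rN)$ after a bounded shift, and the graded pieces can remain equal to $I$ over a stretch $a\le r<b$ of \emph{unbounded} length. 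The subgroups $N_{d,q}$ of Lemma 2.4 have exactly this shape with $b-a\approx d$ arbitrarily large, and there are exponentially many of them in $d$; your counting scheme (one parameter $r(N)$, boundedly many ``shapes,'' at most $|\bbf|^{O(c_2)}$ lifts per shape) would give $t_n(M)=O(\log n)$, contradicting the lower bound $t_n(M)\succcurlyeq n$ that you yourself cite as matching. A smaller but related problem: normality of $N$ in $M=Q_m$ only yields the shifted condition $[\ol L,\,gr_rN]\subseteq gr_{r+m}N$, so the pieces $gr_rN$ are merely additive subgroups, not ideals of $\ol L$; the trichotomy $\{0,I,\ol L\}$ cannot be asserted levelwise as you do, only asymptotically with fuzzy transition zones.

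What is missing is the two-parameter analysis that constitutes the heart of the paper's argument: every open normal $N$ determines integers $a\le b$ (where the graded pieces pass from $0$ to $I$ and from $I$ to $\ol L$, up to a bounded blur $\ell m$), one first reduces to the sharp pattern by passing to $Q_a\cap Q_bN$ and bounding the fiber of this reduction polynomially in $[M:N]$, and then one proves by induction on $b-a$ that the number of normal subgroups with a fixed sharp pattern is at most $|\bbf|^{c_1(b-a)m}$ --- polynomial in the index (which is at least $|\bbf|^{(b-a)\dim(\ol L/I)}$), but \emph{not} bounded. The induction uses the identity $[Q_m,Q_{b-m}N]=Q_{a+m}\cap N$ to fix $Q_{b-m}N$ and $Q_{a+m}\cap N$, and then bounds the remaining ambiguity by counting homomorphisms from a subgroup of $Q_a/Q_{a+m}$ into a quotient of the central layer $Q_{b-m}/Q_b$. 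Your sketch for (i) and (iii) inherits the same flaw, since it relies on the bounded-window picture; in the paper, (i) follows from the location of $Z(M/N)$ relative to $Q_a$ as in Proposition 2.5, and the uniform constant $\gamma$ and triviality of the center in the hyperspecial case come from taking $m=0$ (so $N=Q_a\cap Q_bN$, making the reduction step injective) and from the fact that all constants in the induction depend only on the root system.
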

{\it Proof.}  By [Ti], as $k$ is local non-archimedean, $G$ splits over $k$.  There is therefore
 a split simple group scheme $\calg/\calo$ with generic fiber $G$, so that
  $M$ is commensurable to $\calg(\calo)\subseteq \calg(k) = G(k)$.  By
 Proposition 1.2  we can replace $M$ by any of its  open subgroups, in particular by a principal
  congruence subgroup $Q_m$ of $\calg(\calo)$ when $m$ is sufficiently large.  Fix such an  $m$ and take
$M=Q_m$.  If $m$ is hyperspecial, we take $m = 0$.

Let $\call=Lie (\calg/\calo), \, \ol L = \call \otimes \bbf$ and so $\call = \ol L [[\pi]]$.
Let $I$ denote the unique non-trivial ideal of $\ol L$.  For every normal subgroup $N$ of $M$,
 and for $r \in \bbn$, we denote by  $gr_rN$  the quotient $(Q_r\cap N)/(Q_{r+1} \cap N) \subseteq \ol L$.
 Starting with a subset $Y \subseteq  \ol L$, the iterative process $Y\mapsto [\ol L, Y]$
stabilizes after a bounded number $\ell$ of steps to $C(Y)$, the minimal ideal containing $Y$,
i.e., either $0, I$ or $\ol L$.  For all $r\in \bbn$ and $t \ge r + \ell m$
\begin{equation*}
gr_tN\ge C (gr_rN).
\end{equation*}
Thus there exists integers $a$ and $b$, $m \le a \le b$, such that
\begin{equation}\tag{$\ast$}
gr_rN= \begin{cases}
0 &\text{if} \; \,  r < a-\ell m\\
I &\text{if} \;  a\le r < b\\
\ol L &\text{if} \;  r\ge b + \ell m.
\end{cases}
\end{equation}

One can also easily see that $a$ and $b$ or $O(\log [M\colon N])$.
 At this point part (i) follows by the same argument as (i) of Proposition 2.5.
In the hyperspecial case, moreover, the center $M/N$ is trivial.

 It follows that
\begin{displaymath}
gr_r (Q_a\cap Q_b N)=\left\{ \begin{array}{ll}
0 &\textrm{if $r < a$}\\
I &\textrm{if $a \le r < b$}\\
\ol L &\textrm{if $r\ge b$}.
\end{array}\right.
\end{displaymath}

We claim that for a fixed $N$ satisfying $(\ast)$, the set of open normal subgroups $N'\subset M$ satisfying
 $(\ast)$ for the same constant $a$ and $b$, with $Q_a\cap Q_bN' = Q_a\cap Q_bN$ is bounded by a polynomial
  in $|M/N|$.  Indeed, $N' \supseteq Q_{b+\ell m}$ and $Q_bN'/Q_{b+\ell m}$ is generated
   by $(Q_a\cap Q_b N)/Q_{b+\ell m}$ and a bounded number of other elements of $Q_{a-\ell m}/Q_{b+\ell m}$, so the
   number of possibilities  for  $Q_bN'$ is bounded by $p^{c'b}$, hence polynomially in $|M/N|$.  Now, as $N'\tle Q_bN'$ is of
   bounded index, $N'/Q_{b+\ell m}$ is the kernel of a homomorphism from $Q_bN'/Q_{b+\ell m}$ to a group of bounded order.  The number
of generators of a group is logarithmic in its order, so the number of such homomorphism is again bounded
 polynomially  in $|M/N|$.  If $m = 0, N = Q_a
 \cap Q_b N$, so the number of possibilities for $N'$ is 1, i.e., $N' = N$.

 It therefore remains to fix $(a, b)$ and count the number of normal open subgroups $N$ of $M$
with


\begin{equation}\tag{$\ast\ast$}
gr_rN=\begin{cases}
0 &\text{if} \; \, r<a\\
I &\text{if} \; \, a \le r < b\\
\ol L &\text{if} \; \, r \ge b.
\end{cases}
\end{equation}

At this point, if $M$ is hyperspecial, it will be more convenient to reset  $m$ to  $1$.
We now prove by induction that the number of possibilities for $N$ satisfying $(\ast\ast)$ is bounded
above by $|\bbf |^{c_1(b-a)m}$ where $c_1$ is an absolute constant.
For $b-a < 2m$ the claim is trivial.  Suppose $b-a\ge 2m$.  Let $N_1$ and $N_2$ denote two groups in
 this collection.  Now, for $i \in \{ 1, 2\}$,
 \begin{equation*}
 [Q_m, Q_{b-m} N_i] = [Q_m, Q_a \cap Q_{b-m} N_i]\subseteq Q_{a+m} \cap Q_bN_i = Q_{a+m} \cap N_i
 \end{equation*}
 At the associated graded level
\begin{displaymath}
gr_r[Q_m, Q_{b-m}N_i]=gr_r(Q_{a+m}\cap N_i) =\left\{\begin{array}{ll}
0 &\textrm{if $r<a+m$}\\
I &\textrm{if $a +m\le r < b$}\\
\ol L &\textrm{if $r \ge b$}.
\end{array}\right.
\end{displaymath}
So $[Q_m, Q_{b-m}N_i] = Q_{a+m} \cap N_i$.
By the induction hypothesis, the number of possibilities for $Q_{b-m}N_i$
is at most $|\bbf|^{c_1(b-a-m)m}$.  We fix one, so
\begin{equation*}
Q_{b-m} N_1= Q_{b-m} N_2
\end{equation*}
and
\begin{equation*}
Q_{a+m} \cap N_1 = Q_{a+m}\cap N_2.
\end{equation*}
We have a commutative diagram

$$
\CD
@.0 @. 0 \\
@. @VVV  @VVV \\
@.  Q_{b-m}/Q_b @ = Q_{b-m}/Q_b \\
@.  @VVV @VVV \\
0 @>>>  Q_{a+m}/Q_b @>>> Q_a/Q_b @>>> Q_a/Q_{a+m} @>>> 0\\
@. @VVV  @VVV \Big\| \\
0 @>>> Q_{a+m}/Q_{b-m} @>>> Q_a/Q_{b-m} @>>> Q_a/Q_{a+m} @>>> 0\\
@. @VVV @VVV \\
@. 0 @. 0\\
\endCD
$$
where  the two columns are central extensions (as $(a+m) + (b-m) \ge b$ and $a + (b-m)\ge b$).
Now, $N_1/Q_b$ and $N_2/Q_b$ are subgroups of $Q_a/Q_b$ whose images in $Q_a/Q_{b-m}$ and intersections with
$Q_{a+m}/Q_b$ coincide.  Suppose their intersections with $Q_{b-m}/Q_b$ are both equal
to $R/Q_b$ for some $R\supset Q_b$.  The discrepancy between the groups $N_i/Q_b$  is encoded by a homomorphism $N_1/Q_{b-m} \to Q_{b-m}/R$ which is trivial
 on $(N_1\cap Q_{a+m})/Q_{b-m}$, i.e., a homomorphism from a subgroup of $Q_a/Q_{a+m}$ to a quotient group
  of $Q_{b-m}/Q_b$.  The number of such homomorphisms is  bounded by $\|\bbf|^{c_2m^2}$ and the same applies
to the number of possibilities  $R$.  Thus parts (ii) of the proposition are also proved.
\hfill $\square$

We can now finish the proof of the upper bounds in Theorem 2.1.  The only thing left is to extend Propositions 2.5
 and 2.6, to the case that $G$ is not necessarily simple.  So let $G$ be as in Theorem 2.1
  parts (ii) and (iii).  By passing to an open subgroup of $M$, which is permissible by (1.2), we can assume that $M$
   is a product $\mathop{\Pi}\limits^r_{i=1} M_i$ where $M_i$ is an open compact subgroup of $S_i(k)$ and where  $S_i$
is a simple $k$-algebraic group.

We can now apply Proposition 1.5, with Propositions 2.5  and  2.6  to finish the
proof.
\hfill $\square$
\vskip.2cm
\noindent
{\it Proof of Theorem D}.
Theorem D is a special case of Theorem 2.1; the only point to note is that since $\calg_\eta$ is simply connected, $\Delta = \calg(\calo)$
is finitely generated [BL].
\hfill $\square$


\section{The global case}

In this section we prove first the following Theorem 3.1, from which Theorem C is deduced.  We then
 prove a variant of it, Proposition 3.2 below, to be used in the proof of Theorem B.
 \begin{thm} Let $k$ be a global field of characteristic $p\ge 0, \; S$ a non-empty set of valuations of
  $k$ containing all the archimedean ones and
  \begin{equation*}
  \calo_S=\{ x \in k| v(x) \ge 0, \forall v \notin s\}.
  \end{equation*}
  Let $\calg$ be a smooth group scheme over $\calo_S$, whose generic fiber $\calg_\eta$ is connected, simple
   and simply connected.  Let $H$ be the profinite group $\calg (\hat\calo_S)$.
 Then the growth type of $t_n(H)$ is:
\begin{enumerate}
\item[\rm (i)] $n$ if $\calg $ is of type $G_2, F_4$ or $E_8$.

\item[\rm (ii)] $n^{\log n/(\log\log n)^2}$ if $\calz(\calg_\eta)\neq 1$ and $p \nmid |\calz(\calg_\eta)|$

\item[\rm (iii)] $n^{\log n}$ if $p \Big| |\calz(\calg_\eta)|$.
   \end{enumerate}
   \end{thm}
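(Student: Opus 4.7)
The plan is to decompose $H = \prod_{v \notin S} H_v$ with $H_v := \calg(\calo_v)$ and reduce to the local analysis of Section 2, then combine the local estimates using the counting machinery of Section 1. For all but finitely many $v$, the group scheme $\calg/\calo_v$ is hyperspecial; Propositions 2.5 and 2.6 then force the center $Z(H_v/N)$ of any finite-index quotient to have order bounded by a constant depending only on $|\calz|$, and Proposition 1.2 lets us absorb the finite set of non-hyperspecial places into a polynomial correction.

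Case (iii) is quickest: the characteristic $p$ of $k$ is positive and divides $|\calz(\calg_\eta)|$, and since every completion $k_v$ has the same characteristic, Theorem 2.1(i) produces $n^{\log n}$ normal subgroups inside a single local factor $H_v \subset H$; the universal upper bound from Proposition 1.6 closes the case. For case (i), the hypothesis $|\calz| = 1$ together with Propositions 2.5(i) and 2.6(iii) gives uniformly bounded (trivial at hyperspecial Ree places) centers of local quotients, so iterated Proposition 1.5 reduces the count to
\[
t_n(H) \,\le\, \sum_{(n_v):\,\prod n_v \le n} \prod_v t_{n_v}(H_v)^2 \cdot O(1),
\]
with each local factor bounded by $O(\log n_v)$ or $O(n_v^\gamma)$ (the latter only at the finitely many Ree places); the standard divisor-type count of tuples then yields $t_n(H) \preceq n$, matched from below by taking level-one principal congruence subgroups at varying finite sets of hyperspecial places.

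The heart of the argument is case (ii), where $|\calz| > 1$ but $\mathrm{char}(k) \nmid |\calz|$, so $\calz$ is \'etale of order $m > 1$ and the local central pieces $\calz(\calo_v/\pi_v)$ have order $\gcd(m, q_v - 1)$, nontrivial for a positive-density set of $v$. Iterating Lemma 1.3 shows that for each tuple $(N_v)$ of local normal subgroups, the normal subgroups $N \tle \prod_v H_v$ with $N \cap H_v = N_v$ are in bijection with arbitrary subgroups of the central product $\prod_v Z(H_v/N_v)$. For the lower bound, choose $k \asymp \log n /\log\log n$ small primes $v$ with nontrivial $\calz(\calo_v/\pi_v)$ and set $N_v = Q_v(1)$ there; then $\prod_v Z(H_v/N_v)$ contains a copy of $(\bbz/\ell\bbz)^k$ for some prime $\ell \mid m$, whose $\asymp \ell^{k^2/4}$ subgroups translate into $\asymp n^{\log n/(\log\log n)^2}$ normal subgroups of $H$ of index $\le n$. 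For the upper bound, iterated Proposition 1.5 bounds the glueing contribution by the number of subgroups of a bounded-exponent abelian group of rank $O(\log n/\log\log n)$, giving the matching $n^{O(\log n/(\log\log n)^2)}$.

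The main difficulty lies in case (ii): one must justify the iterated application of Lemma 1.3 precisely enough to see that the subgroup-of-product-of-central-pieces factor is both an upper bound and asymptotically realized from below, and one needs the Chebotarev-type input that $\gg X/\log X$ primes $v \le X$ satisfy $\ell \mid (q_v - 1)$, so the lower-bound construction has enough active small-residue-field places to operate.
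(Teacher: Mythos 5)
Your lower bounds (ideals of $\calo_S$ in case (i), Chebotarev plus central $q$-elementary subgroups of $H/Q(m(x))$ in case (ii), projection to one local factor in case (iii)) match the paper, and the general decomposition $H=\prod_v\calg(\calo_v)$ is the right starting point. The genuine gap is in the upper bounds. In case (i) you argue from ``uniformly bounded (trivial at hyperspecial Ree places) centers'' of the local quotients, but bounded is not enough: if $|Z(H_v/N_v)|$ can be a constant $c\ge 2$ at infinitely many places, the gluing contribution in Proposition 1.5 is $z_n^{\delta_n}\approx c^{O((\log n/\log\log n)^2)}$, which is super-polynomial, so you would only recover the case (ii) bound, not growth type $n$. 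To get type $n$ one must show that the congruence quotients at almost all places are actually \emph{centerless} for $G_2,F_4,E_8$; this is exactly what the paper proves via its conditions (a)--(d) on good primes and the structural ``sandwich'' statement that every open normal subgroup $N$ of $H_1=\prod_{v\in\calp_1}\calg(\calo_v)$ satisfies $Q_1(I)\subseteq N\subseteq Z_1(I)$ for an ideal $I$ (with a separate hyperspecial/Ree treatment using Corollary 1.4 and Proposition 2.6(iii)). That induction, resting on almost-simplicity of $\calg(\bbf_v)$, $[Q_v(1),Q_v(i)]=Q_v(i+1)$, and simplicity of $Q_v(1)/Q_v(2)$ as $\calg(\bbf_v)$-module, is the heart of the upper bound and is absent from your outline.

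The same issue undermines the mechanics of case (ii). You cannot literally ``iterate Proposition 1.5'' over the infinitely many local factors: each application squares one of the factors, so peeling off an unbounded number of places blows up the exponents. The many-factor substitute you assert -- that normal $N\tle\prod_v H_v$ with prescribed intersections $N_v$ are \emph{in bijection with arbitrary} subgroups of $\prod_v Z(H_v/N_v)$ -- is not correct as stated (one only gets an embedding $N/\prod_v N_v\hookrightarrow\prod_v Z(H_v/N_v)$, and only subgroups meeting the factors trivially arise); as an upper bound it is salvageable, but then the quantitative inputs are missing: a bound on $|Z(H_v/N_v)|$ that is \emph{uniform in $v$} (Proposition 2.5(i) gives a constant depending on the local group, via $\ell,m$ there), the fact that an index-$\le n$ normal subgroup can be ``active'' at only $O(\log n/\log\log n)$ places, and a polynomial (resp. $n^{O(\log n/(\log\log n)^2)}$) count of the admissible tuples $(N_v)$. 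Establishing these is essentially equivalent to the paper's good-prime analysis and sandwich lemma, so the proposal as written records the correct answer and the correct lower-bound constructions but leaves the decisive part of the argument unproved.
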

   \begin{proof}
 We remark first that $H = \calg(\hat\calo_S)$ is indeed a  finitely generated group.
 This can be proved
 by analyzing the Frattini subgroup of $\calg(\hat\calo_S) = \mathop\Pi\limits_{v\notin S}\calg(\calo_v)$,
   or, alternatively,  as follows:  for sufficiently large $S' \supset S$, the $S'$-arithmetic group $\calg(\calo_{S'})$
    is finitely generated [Be] and dense in $\calg(\hat\calo_{S'})$ [PR], and $\calg(\hat\calo_S) =
    \calg(\hat\calo_{S'})\times\mathop{\Pi}\limits_{v\in S'\setminus S} \calg(\calo_v)$.
    As each of $\calg(\calo_v)$ is finitely generated [BL], so is $\calg(\hat\calo_S)$.

  Case (iii) follows easily from Theorem 2.1, i.e., already the projection to one
   local factor gives growth type at least $n^{\log n}$.  This, in turn, is the maximal
   possible normal subgroup growth type of finitely generated profinite groups by (1.6).

   We first prove the lower
   bounds of (i) and (ii).  For (i): $\calo_S$ (and hence $\hat \calo_S$) has at least $c n$ ideals of index at most $n$,
    for some fixed $c > 0$ (depending on $\calo_S$) and $n$ sufficiently large.  Each such ideal
$I$ gives rise to a principal congruence  subgroup
\begin{equation*} Q(I) =Ker \big(\calg(\hat\calo_S)\to\calg(\hat\calo_S/I)\big)
\end{equation*}
of index at most $n^d$ for some constant $d$.  This shows that the growth type of $t_n(H)$ is at least
 $n$.

 (ii)  Let $k'$ be a finite Galois extension of $k$, in which $\calz = \calz(\calg_\eta)$
 splits.  In particular $\calz(k')$ is a finite group of order say $z$,
 and by our assumption in (ii), $p\nmid z$.
Let $\calp_1$ be the set of primes in $k$ which splits completely
in $k'$ and $\calp = \calp_1\setminus S$.  By the Cebotarev density theorem, $\calp_1$
(and as $S$ is finite, also $\calp$) has positive density.

For a large real number $x$, let $\calp_x$ be the set of all primes in $\calp$ of norm
at most $x$ (where a norm $|P|$ of a prime $P$
 is its index in $\calo_S$).  By the Prime Number Theorem  and the
 positive density of $\calp$,  we have: $
\pi(x)/\frac{x}{\log x}$ and $ \psi(x)/x$ are both  bounded away from zero and infinity when
$\pi(x) = | \calp_x|$ and $\psi(x) = \suml_{P\in\calp_x} \log |P|$.

Let $m(x) = \mathop{\Pi}\limits_{P\in\calp_x}  P$ and let $Q\big(m(x)\big)$ denote, as before, the principal congruence
subgroup $\big(\!\!\!\mod m(x)\big)$.  It follows  that $|\calo_S/m(x)|\approx c^x_1$ for some constant $c_1$.
Fix now a prime $q$ dividing $z$.
Now:
\begin{equation*} H/Q\big(m(x)\big)\simeq \calg(\calo_S/m(x)\big) = \prod\limits_{P \in \calp_x}
\calg(\calo_S/P).
\end{equation*}
This shows that $Q\big(m(x)\big)$ is of index at most $c^x_2$ for some constant $c_2$.
On the other hand, for each $P \in \calp_x$, the finite group $\calg(\calo_S/P)$ has a central subgroup of order $z$, and hence
 also a central cyclic subgroup of order $q$.  Hence $H/Q\big(m(x)\big)$ has a central subgroup which is a $q$-elementary
  abelian group of rank $\pi(x)$.
  This shows that $H/Q\big(m(x)\big)$ has at least $q^{\frac{1}{4} \pi(x)^2}$ central
  subgroups and hence $H$ has at least  $q^{\frac{1}{4}\pi(x)^2}
  \ge
  q^{\frac{1}{4c^2}\cdot\frac{x^2}{(\log x)^2}}$
  normal subgroups
of index at most $c^x_2$.
This proves that the normal subgroup
  growth rate of $H$ is at least $n^{\log n/(\log\log n)^2}$.

  We turn now to the proof of the upper bound.  We start with both cases, (i) and (ii), together.
  We assume without loss of generality that $G$ is connected (see (1.2)).

  We have to prove an upper bound for  $t_n\big(\calg(\hat\calo_S)\big)$.
  Note that $\calg(\hat\calo_S)=
\mathop{\Pi}\limits_{v\notin S}\calg (\calo_v)$.
Let $\calp$ denote the set of all primes of $k$ which are not in $S$.
Let $\calp_1$ be the set of all $v \in \calp$ such that:
\begin{itemize}
\item[(a)] $\calg(\bbf_v)$ is an almost simple group, where
$\bbf_v = \calo_v/m_v$ and  $m_v$ is the maximal ideal of $\calo_v$.

\item[(b)]   If $Q_v(r) = Ker (\calg(\calo_v) \to \calg \big(\calo_v/m^r_v)\big)$ then
$[Q_v(1), Q_v(i)] = Q_v(i+1)$ for every $i \ge 1$.

\item[(c)] The elementary abelian $p$-group $Q_v(1)/Q_v(2)$ is a simple $\calg(F_v)$-module, and

\item[(d)] If $p = 0, \; v \nmid |\calz(\calg)|$
\end{itemize}
Now, unless  $\calg_\eta$ is of Ree type $\calp_1$ contains almost all primes in $\calp$.
By [SGA 3, XIX 2.5] all but finitely many fibers of $\calg$ are simple.  This implies (a).  For (b)
we use (1.8) and the logarithm map discussed in Section 2.  For (c), we note that every composition
factor of the adjoint representation of the special fiber is $|\bbf_v|$-restricted except if $|
\bbf_v|=2$ and $\calg_\eta$ is a form of $SL_2$.  By Steinberg's theorem [St], any restricted
 irreducible representation is irreducible over $\calg(\bbf_v)$.  As $\calg_\eta$ is not of Ree type and $p\nmid
  z$, the irreducibility  of the adjoint representation follows from (1.8).  Part (d) is clear.

Leaving  aside for now the two exceptional cases, consider  $S_1 = S\cup\{ v|v\notin\calp_1\}$ and $H_1=\calg
 (\hat\calo_{S_1}) = \mathop{\Pi}\limits_{v\in\calp_1} \calg(\calo_v)$.
One proves by induction  that for every open normal subgroup $N$ of $H_1$,
there exists an  ideal $I$
 in $\calo_{S_1}$ such that $Q_1 (I) \subseteq
 N\subseteq Z_1(I)$ when
 \begin{equation*}
Q_1 (I) = Ker \big(\calg(\hat\calo_{S_1})\to \calg(\hat \calo_{S_1}/I)\big)
\end{equation*}
and $Z_1(I)$ is the preimage in $H_1$ of the center of $\calg(\hat\calo_{S_1}/I)$.
It now follows, by a similar computation to the one carried out  above for the lower bound, that the
normal subgroup growth rate of $H_1$ is $n$ in case (i) and $n^{\log n/(\log\log n)^2}$ in case (ii).

Now $H= H_1\times H_2$ where $H_2=\mathop{\Pi}\limits_{v\notin\calp_1\cup S}\calg(\calo_v)$.
This is a product of finitely many groups.  The normal subgroup growth rate of $H_2$
is  at most polynomial by Theorem 2.1, Proposition 1.5 and Proposition 2.5.

 We can now finish the proof with the help of Proposition 1.5:
 In case (i), note that the only open
 normal subgroups of $H_1$ are the principal congruence subgroups $Q_1 (I)$, and $H_1/Q_1(I)$
 has no center.  So $t_n\big(\calg(\hat\calo_S)\big) \le t_n(H_1)^2 \cdot t_n (H_2)^2$,
 and so it is polynomially bounded.
 In case (ii), the normal subgroups of $H_1$ lie between
  $Q_1(I)$ and $Z_1(I)$.  Note that if $I$ is an ideal of index $n^\vare$, which is   a product
   of $m$ prime powers then $m\le c\frac{\log n}{\log\log n}$ (by the prime number theorem).
   So the center
   $Z_1(I)/Q_1(I)$ is of order at most $z^{c \log n/\log\log n}$ and its number
    of generators is at most $c'\log n /\log\log n$ (in fact $c'\le 2 c$).
    This shows that $z_n(H_1)\le
z^{c\log n/\log\log n}$ and $\delta_n(H_1) \le c'\log n/\log\log n$ where $z_n$ and $\delta_n$
    are as in (1.6).  Thus $t_n(H) \le t_n(H_1)^2 t_n(H_2)^2
    \cdot z^{cc'(\log n/\log\log n)^2}$.
 As $z $ is a constant, we have finished the proof except
     for groups of Ree type.

For these two cases, let us make the following remarks.
As before we decompose $H = H_1\times H_2$ where $H_1 = \mathop{\Pi}\limits_{v\in\calp_1} \calg(\calo_v)$
 and $\calp_1$ is the set of all primes $v$ for which $\calg(\calo_v)$ is hyperspecial.
 By [SGA3, XIX 2.5], for almost all primes $\calg_{\bbf_v}$ is simple and this implies that $\calg(\calo_v)$ is hyperspecial.
 As before $H_2$ is a finite product of local groups, and in this case the factors are of Ree type.
 By Theorem 2.1(ii), $t_n(H_2)$ is polynomially bounded.  By Proposition 1.5, $t_n(H) \le t_n(H_1)^2
  t_n(H_2)^2z_n(H_1)^{\delta_n(H_1)}$.
  As $H_1$ is a product of hyperspecial factors, its quotient by any open normal subgroup has trivial center by Proposition
   2.6(iii).  It suffices, therefore, to prove that $t_n(H_1)$ is polynomially bounded.

   Let $N\in T_n(H_1)$.
   Then there exists an ideal $I$ such that $N\supset Q_1 (I)$.
We claim that $I$ can be chosen to be so that $Q_1(I)$ has index at most $n^{c_1}$ for some constant $c_1$.
Indeed choose first $I = \mathop{\Pi}\limits^r_{i=1} P_i^{e_i}$ to be some ideal so that $N
\supset Q_1(I)$.  So $N/Q_1(I)$ is a normal subgroup of $S=\mathop{\Pi}\limits^r_{i=1}
\calg(\calo_{P_i}/P^{e_i}_i)$.  Now, as the centers of all the quotients of $S$ are trivial, we
 deduce from (1.4) that $N$ is a product of its intersections with the factors.  In the proof of Proposition 2.6
 we have analyzed the normal subgroups of hyperspecial groups of Ree type, and we implicitly showed
  that every normal subgroup  of index $r$ contains a principal congruence subgroup whose index
   in the first principal congruence subgroup is at most $r^2$.  This shows that $N$ contains a principal
    congruence subgroup of index at most $n^3$.
For each $I = \mathop{\Pi}\limits^r_{i=1} P^{e_i}_i$, the numbers
     of normal subgroups of $H_1$ containing $Q_1(I)$
     is   the product over $i$ of the number of  normal subgroups
of $H_i/Q_1(P_i^{e_i})$ which is at the number of  most $|H_1/Q_1(P_i^{e_i})|^\gamma\le |P_i^{e_i}|^{\gamma'}$ for a constant
 $\gamma'$.  Thus  $t_n(H_1)\le\suml_{\{I\big| |I|\le n^c\}} |I|^{\gamma'}$ and this is polynomially
  bounded.  Theorem 3.1 follows.
\end{proof}

For  use in \S 5,  let us put on record the following Proposition, whose proof is quite
similar to the proof of the lower bound of case (ii) of Theorem 3.1.

\begin{prop}  Let $k' \subset k$ an extension of global field, $S$ a finite set of primes
 in $k$ (containing all the archimedean ones) and $S'$ the corresponding induced primes of $k'$.  Let
 $\calo_S$ (resp. $\calo_{S'}$) be the ring of $S$-integers in $k$ (resp. $S'$-integers in $k'$).
Let $\calg$ be a smooth group scheme defined over $\calo_S$ and $\calg'$ a connected smooth group scheme defined over $\calo_{S'}$,
 such that  $\calg'\underset{\calo_{S'}}{\times} \calo_S = \calg^\circ$.
 Assume
the generic fiber $\calg'_\eta$ is simply connected,  $\calz(\calg'_\eta)\neq
 \{ 1\}$ and $ p \nmid |\calz(\calg'_\eta)$.
 Assume $H$ is a subgroup of $\calg(\hat\calo_S)$ containing $\calg'(\hat\calo_{S'})$ as a normal open subgroup.
Then $t_n(H)\succcurlyeq  n^{\log n/(\log\log n)^2}$.
\end{prop}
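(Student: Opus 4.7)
My plan is to mimic the lower-bound argument in case (ii) of Theorem 3.1, with additional care taken to ensure that the subgroups produced are normal in $H$ rather than merely in $\calg'(\hat\calo_{S'})$. First I would fix a prime $q$ dividing $|\calz(\calg'_\eta)|$ (necessarily distinct from $p$ by hypothesis), choose a finite Galois extension $k''/k'$ over which $\calz(\calg'_\eta)$ splits, and let $\calp_1$ be the set of primes $w$ of $k'$, not in $S'$, that split completely in $k''$. The Cebotarev density theorem guarantees that $\calp_1$ has positive density, so setting $\calp_x = \{w \in \calp_1 : |\calo'_w/w| \le x\}$, the prime number theorem gives $\pi(x) := |\calp_x| \asymp x/\log x$.

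Next I would introduce the ideal $m(x) = \prod_{w \in \calp_x} w \subset \calo_{S'}$ and the subgroup
\begin{equation*}
Q = \calg'(\hat\calo_{S'}) \cap \ker\!\bigl(\calg(\hat\calo_S) \to \calg(\hat\calo_S / m(x)\hat\calo_S)\bigr).
\end{equation*}
Since the right-hand kernel is normal in $\calg(\hat\calo_S) \supseteq H$ and $\calg'(\hat\calo_{S'})$ is normal in $H$ by hypothesis, $Q$ is normal in $H$. One checks as in the proof of Theorem 3.1 that $Q$ coincides with $\ker(\calg'(\hat\calo_{S'}) \to \calg'(\hat\calo_{S'}/m(x)))$, so the quotient $G_x := \calg'(\hat\calo_{S'})/Q \cong \prod_{w \in \calp_x} \calg'(\bbf_w)$ has order at most $c_1^x$ for some constant $c_1$.

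Inside $G_x$ I would zoom in on the central $q$-elementary abelian subgroup
\begin{equation*}
A := \bigoplus_{w \in \calp_x} \calz(\calg'_\eta)(\bbf_w)[q],
\end{equation*}
whose $\bbf_q$-dimension is at least $\pi(x)$ (each summand has dimension $\ge 1$ since $q$ divides $z$ and $\calz$ is a constant group scheme over $\bbf_w$ for $w \in \calp_1$). This $A$ is preserved by conjugation by $H$, and the induced action factors through $\Delta := H/\calg'(\hat\calo_{S'})$, a finite group. The step I anticipate as the main obstacle is verifying that this $\Delta$-action respects the product decomposition $A = \bigoplus_w A_w$. This should follow from the inclusion $H \subset \calg(\hat\calo_S) = \prod_{v \notin S} \calg(\calo_v)$: conjugation by any element of $H$ preserves each factor indexed by $v$, and hence each factor of $\calg'(\hat\calo_{S'}) = \prod_w \calg'(\calo'_w)$ indexed by $w$, since the $w$-th factor can be characterized inside $\prod_v \calg(\calo_v)$ as the subgroup of elements whose $v$-coordinates are trivial for all $v$ not dividing $w$ (using the injectivity of $\calg'(\calo'_w) \hookrightarrow \calg'(\calo_v)$).

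Granted this, each summand $A_w$ has $\bbf_q$-dimension bounded by the $q$-rank of $\calz$, so only finitely many $\bbf_q[\Delta]$-isomorphism classes can occur; by pigeonhole, a single isomorphism class $M$ accounts for at least $\pi(x)/C$ of the summands, producing an isotypic submodule $A' \cong M^{\oplus N}$ of $A$ with $N \ge \pi(x)/C$. A standard count of subspaces inside the multiplicity space of $M$ shows that $A'$ has at least $q^{c N^2}$ distinct $\bbf_q[\Delta]$-submodules, each of which pulls back to a distinct $H$-normal subgroup of $H$ of index at most $|\Delta|\cdot|G_x| \le c_2^x$. Setting $n = c_2^x$ therefore yields at least $q^{c'(\log n)^2/(\log\log n)^2} \ge n^{\gamma \log n/(\log\log n)^2}$ normal subgroups of $H$ of index at most $n$, which is the desired lower bound.
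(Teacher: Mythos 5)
Your proposal is correct and takes essentially the same route as the paper's own proof: reuse the congruence construction from the lower bound of Theorem 3.1(ii), note that the relevant congruence subgroups and the individual factors $\calg'(\calo'_w)$ --- hence the central $q$-subgroups of the finite quotients --- are normalized by $H$, observe that the action on these central pieces factors through the finite group $\Delta=H/\calg'(\hat\calo_{S'})$, and pigeonhole to produce $q^{cN^2}$ invariant central subgroups of controlled index. Your pigeonhole on $\bbf_q[\Delta]$-isomorphism classes is just a repackaging of the paper's remark that there are finitely many homomorphisms $\Delta\to\mathrm{Aut}(C_P)$, so the action is diagonal on a positive proportion of the factors (and your justification that $H$ preserves each factor, via the characterization of the $w$-th factor as the elements of $\calg'(\hat\calo_{S'})$ with trivial coordinates at all $v\nmid w$, is exactly what the paper asserts without detail).
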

\begin{proof} By Theorem 3.1, $t_n(\calg'(\hat\calo_{S'}))\succcurlyeq n^{\log n/(\log\log n)^2}$.
Recall that we have shown there that for suitable choices of product of primes of $\calo_{S'}$,
 $m(x) = \mathop{\Pi}\limits_{P\in\calp_x} P$, there is a sufficiently large $q$-elementary abelian
  central subgroup $V = \mathop{\Pi}\limits_{P \in \calp_x} C_P$  in $\calg'\big(\calo_{S'}/m(x)\big)$ where $C_P$ is the $q$-part of the center
of $\calg'(\calo_{S'}/P)$.  These provide enough normal subgroups to ensure that growth.
   The principal congruence subgroups of $\calg'(\hat\calo_{S'})$ are intersections of principal
    congruence subgroups of $\calg(\hat\calo_{S})$ with $\calg'(\hat\calo_{S'})$ and are therefore
     normalized by $H$.  $H$ also normalizes the individual factors $\calg'(\calo_v)$ for $
v \in S'$.  Hence it preserves $C_P$ for every $P \in \calp_x$.
All the factors $C_P$ are isomorphic and $\calg'(\hat\calo_{S'})$ acts trivially.  There are finitely
 many possible homomorphisms from $H/\calg'(\hat\calo_{S'})$ to $Aut(C_P)$.  Hence the action is diagonal on a
 sufficiently large subset of $\calp_x$.  This gives the desired lower bound for $H$ as well.
 \end{proof}

We finally note:
\vskip.2cm
\noindent
{\it Proof of Theorem C.}  Theorem C is an immediate corollary of Theorem 3.1.
Indeed, $\Delta$ is infinite so the classical strong approximation theorem [PR] implies that
$\Delta =\calg(\calo_S)$ is dense
 in the profinite group $\calg(\hat\calo_S)$ and the profinite topology of $\calg(\hat\calo_S)$
  induces on $\Delta$ the congruence topology, so $D_n(\Delta) = t_n\big(\calg(\hat\calo_S)\big)$.
\hfill $\square$

\section{Specializing while preserving the Zariski closure}

This section is devoted to the following question:   Let $A$ be an integral domain with
fraction field $K$ and  $\Gamma$ a finitely generated subgroup of $GL_n(A)$ with Zariski closure $G$
 in $GL_{n, K}$. Is there a specialization $\phi:A\to k$, where $k$ is a global field, such that
 the Zariski closure of $\phi(\Gamma)$ is $\ol K$-isomorphic to $G$?

 Of course, we cannot expect this to be true for every $\Gamma$.  For example, if $G$
  (as an algebraic group over an algebraic closure $\ol K$ of $K$) is not isomorphic to a group defined over
   some global field, then the Zariski closure of $\vp(\Gamma)$ cannot be isomorphic to $G$.  Recall,
    for example, that there are uncountably many $\bbc$-isomorphism classes of unipotent algebraic groups, so most of them
     are clearly not isomorphic to groups defined over global fields.  For our purposes,
     it suffices to consider the case where $G$ is connected and (absolutely) simple.
     In this case, as  is well known,
$G$ is $\ol K$-isomorphic to a group defined over the prime field, so potentially our question may
       have a positive answer.  This is exactly what we prove in the following theorem.  In fact, a similar
result holds for semisimple groups and even for reductive groups, but the proof for the simple case is
considerably
easier  and sufficient for our needs.
\begin{thm} Let $A$ be an integral domain, finitely generated over the prime field of characteristic
 $p \ge 0$, with fraction field $K$.  Let $\Gamma \le GL_n(A)$ denote a finitely generated subgroup
whose Zariski closure in $GL_{n, K}$ is a connected absolutely simple group $G$.
Then there exists a global field $k$ and a ring homomorphism $\phi\colon A\to k$
 such that the Zariski closure of $\phi(\Gamma)$ in $GL_{n, k}$ is $\ol K$-isomorphic to $G$.
 \end{thm}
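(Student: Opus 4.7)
The plan is to produce $\phi\colon A\to k$ as evaluation at a carefully chosen point of $U=\operatorname{Spec}(A)$, arranged so that its residue field is a global field and so that Zariski density of $\Gamma$ is preserved under specialization. As preparation, after replacing $A$ by an appropriate localization $A[f^{-1}]$ (which leaves $K$ unchanged), I would spread $G$ to a smooth affine group scheme $\mathcal{G}/A$ whose geometric fibers are all connected and absolutely simple of the same Dynkin type as $G$, and arrange $\Gamma\subseteq \mathcal{G}(A)$. Fix generators $\gamma_1,\dots,\gamma_m$ of $\Gamma$, viewed as sections of $\mathcal{G}\to U$.

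The key step is to show that the locus
\[
U^\circ=\{u\in U:\text{the Zariski closure of }\langle\gamma_1(u),\dots,\gamma_m(u)\rangle\text{ in }\mathcal{G}_u\text{ equals }\mathcal{G}_u\}
\]
contains a Zariski open dense subset of $U$; after a further localization one may then assume $U^\circ=U$. By (1.8), outside the two Ree exceptions the adjoint representation of $\mathcal{G}_u$ on $\mathfrak{g}_u$ is irreducible, so by Burnside's theorem the linear span of $\operatorname{Ad}(\mathcal{G}_u)$ in $\operatorname{End}(\mathfrak{g}_u)$ is everything; hence any Zariski dense subgroup has the same property. This in turn is witnessed by a matrix-rank condition on the finite tuple $(\operatorname{Ad}(w(\gamma_i(u))))_w$ where $w$ ranges over words of some bounded length, a condition that is Zariski open in $u$ and holds at $\eta$ by hypothesis. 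To upgrade ``$\operatorname{Ad}$-span maximal'' to ``Zariski closure $=\mathcal{G}_u$'' one uses that the proper closed subgroups of $\mathcal{G}_u$ whose adjoint action remains irreducible form a bounded family (via the classification of maximal subgroups of simple algebraic groups), so membership in any such family defines a closed condition on $u$ that fails at $\eta$ and hence fails on a Zariski open subset. The Ree cases are handled by the parallel argument applied to the canonical proper Lie-algebra ideal from (1.8) together with its quotient.

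Having arranged $U^\circ=U$, I just need a point of $U$ whose residue field is a global field. In characteristic $0$, $A$ is a finitely generated $\mathbb{Q}$-algebra, so by the Nullstellensatz every closed point of $U$ has residue field a number field and yields $\phi$. In characteristic $p$, $U$ is a positive-dimensional affine variety over $\mathbb{F}_p$, and by Noether normalization I can pull back a generic line under a finite map $U\to\mathbb{A}^d_{\mathbb{F}_p}$ to find a one-dimensional integral closed subscheme $C\subseteq U$ whose function field is a finite extension of $\mathbb{F}_p(t)$, i.e., a global field; the generic point of $C$ provides the desired $\phi$. Since $\phi$ corresponds to a point of $U^\circ=U$, $\phi(\Gamma)$ is Zariski dense in $\mathcal{G}_k$, so its Zariski closure in $GL_{n,k}$ equals $\mathcal{G}_k$, which is a form of the same absolutely simple type as $G$ and hence becomes isomorphic to $G$ over any common field extension containing splitting fields for both.

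The main obstacle is the openness step in the middle paragraph: the passage from the Burnside/rank condition (which is open and only \emph{necessary} for Zariski density) to genuine Zariski density $H_u=\mathcal{G}_u$ requires ruling out proper closed subgroups of $\mathcal{G}_u$ whose adjoint action remains irreducible. This relies on nontrivial structural input from the classification of maximal subgroups of simple algebraic groups, and is the technical heart of the proof.
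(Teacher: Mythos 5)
Your spreading-out step and your treatment of positive-dimensional proper subgroups run parallel to the paper (its Lemma 4.2 spreads $\calg$ out to a split simple group scheme, and its Lemmas 4.3--4.4 use the Liebeck--Seitz boundedness of maximal positive-dimensional subgroups plus Chevalley's theorem on constructible sets to get an open locus of good points). But there is a genuine gap at the heart of your middle step: what the openness argument can deliver is only that, on a dense open $U$, the closure of the specialized group is \emph{either finite or all of} $\calg_u$ --- which is exactly what the paper's Lemma 4.4 states --- and not that it equals $\calg_u$. Your Burnside/rank criterion cannot exclude finite images: a finite subgroup has zero Lie algebra, so it can perfectly well act irreducibly on $\mathfrak{g}_u$ and have full $\operatorname{Ad}$-span (e.g.\ $G(\bbf_q)$ inside $G$ in characteristic $p$, or the icosahedral group inside $SO_3$), so it passes your open test while being nowhere near Zariski dense; and the classification of maximal subgroups you invoke only bounds the \emph{positive-dimensional} ones (for those, incidentally, irreducibility of $\operatorname{Ad}$ already fails for free, since such a subgroup preserves its own nonzero proper Lie algebra). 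Worse, your claim that $U^\circ$ contains a dense open subset, so that one may ``assume $U^\circ=U$,'' is simply false in characteristic $p>0$: every closed point of $U$ has finite residue field, hence finite image, hence non-dense closure, so no nonempty open set consists of points with dense image. Your final step then breaks down precisely where you write ``Since $\phi$ corresponds to a point of $U^\circ=U$'': nothing in your argument guarantees that the generic point of the curve $C$ you produce avoids the finite-image alternative (one can write down Zariski dense $\Gamma\le SL_2(\bbf_p[t,s])$ whose restriction to a line specializes onto the finite group $SL_2(\bbf_p)$ while still acting irreducibly on the Lie algebra).

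The missing idea, which is the paper's actual second half, is a mechanism to rule out finiteness at the chosen specialization. The paper fixes an (almost faithful) irreducible representation $\rho$ and the trace character $\chi(\gamma)=\operatorname{tr}\rho(\gamma)$, notes by Burnside's lemma that $\chi(\Gamma)$ is infinite, and then \emph{chooses} $\phi$ accordingly: if $\chi(\Gamma)$ lies in the field of constants, any specialization already has infinitely many trace values; otherwise, in characteristic $0$ one picks $\phi$ sending a non-constant $\chi(\gamma)$ to a large integer $r>m$ (not a sum of $m$ roots of unity, so $\phi(\gamma)$ has infinite order), and in characteristic $p$ one takes $C$ to be a quasi-section on which $\chi$ remains dominant, so $\phi(\chi(\gamma))$ is transcendental over the constants. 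Only then does the ``finite or everything'' dichotomy of Lemma 4.4 yield density. In characteristic $0$ one could alternatively try to show the finite-image locus is contained in a proper closed subset (via Jordan's theorem), but that is not in your proposal either, and no such statement can hold in positive characteristic; so as written the proof does not go through.
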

 \begin{remark*}  Note that we assert that the groups are isomorphic, but we do not claim that the ambient
 representations of the Zariski closures are isomorphic (i.e., we do  not claim
 that they are conjugate in $GL_n(\ol K)$).
 This is because in characteristic $p> 0$, representations of a simple algebraic group need not be rigid.
\end{remark*}

We begin with a few general remarks about Zariski closure.
 If $Y\to S$ is a morphism of schemes, $X\subset Y(S)$ is a set of sections and $s$
is a point of $S$, the closure of $X\cap Y_s$ in the fiber $Y_s$ is contained in $\ol X \cap Y_s$ since the latter is closed in $Y_s$.
If $S$ is irreducible and $s \in S$ is the generic point, then closure commutes with restriction to
 $Y_s$.  Indeed, any closed set in $Y$ containing $x_s$ for $x$ an element of $Y(S)$ contains
  all of $x$ and therefore any closed set in $Y$ containing $X\cap Y_s$ contains $\ol X$.

  A second remark is that if $Y\to S$ is a morphism of schemes,  $X $ is a subset of $Y(S)$ and $T\to S$
   is an open morphism, then the Zariski closure of $X\underset{S}{\times} T$ in $Y\underset{S}{\times}
   T$ equals $(\ol X\underset{S}{\times} T)^{red}$,
    i.e., set-theoretically, the Zariski closure commutes with open base change ([EGA IV, 2.4.11]).

    In particular this is the case when $T\to S$ is \'etale as well as the case when $T$ is obtained from $S$
     by tensoring by  an arbitrary field extension [EGA IV, 2.4.10].
\vskip.2cm

     We break the proof into several lemmas, in which we keep the notations of Theorem 4.1.  Let
$     \calg$ be the Zariski closure of $\Gamma$ in $GL_{n, A}$.  Note that the generic fiber of $\calg$ is $G$.
\begin{lem} There exists an \'etale $A$-algebra $B$ such that the Zariski closure $\calg_B$
of $\Gamma$ in $GL_{n, B}$ is a {\it split} simple group scheme.
 \end{lem}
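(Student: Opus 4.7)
My plan is to construct $B$ as a composition $A \to A_1 \to A_2 \to A_3$ of three successive \'etale extensions. By the remark on Zariski closure under open base change recorded just above the lemma, the Zariski closure of $\Gamma$ in $GL_{n,A_i}$ agrees with (the reduced structure of) $\calg \times_A A_i$ at each stage, so it suffices to arrange that $\calg \times_A A_3$ is a split simple group scheme. The first step is a localization that turns $\calg$ into a smooth simple group scheme: since the generic fiber $\calg_K = G$ is smooth, connected, and absolutely simple over $K$, a standard spreading-out argument (EGA IV) produces a nonzero $f \in A$ such that, writing $A_1 := A[1/f]$, the scheme $\calg_{A_1}$ is smooth over $A_1$, inherits a group scheme structure from $G$ (so is a closed subgroup scheme of $GL_{n,A_1}$), and has connected, absolutely simple geometric fibers all of the Dynkin type of $G$. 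Note $A \to A_1$ is \'etale.

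The second step produces a maximal torus. By SGA3, Exp.~XII, the functor of maximal tori of a reductive group scheme is represented by a scheme that is smooth and surjective over the base; hence there is a faithfully flat \'etale $A_1$-algebra $A_2$ over which $\calg_{A_2}$ admits a maximal torus $\calt \subset \calg_{A_2}$. The third step splits $\calt$: its character sheaf is a locally constant sheaf of finitely generated free abelian groups, and this is trivialised by a finite \'etale extension $A_2 \to A_3$, rendering $\calt_{A_3}$ a split torus. With a split maximal torus inside $\calg_{A_3}$, the structure theory of reductive group schemes (SGA3, Exp.~XXIII) identifies $\calg_{A_3}$ with the split Chevalley form of the type of $G$; setting $B := A_3$ then gives the required split simple group scheme, and $A \to B$ is \'etale as a composition of \'etale maps.

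The main obstacle is Step~1: one must spread out not merely the group-scheme structure of $\calg_K$, but also connectedness, smoothness, and absolute simplicity of every geometric fiber of $\calg_{A_1}$. This is a standard but slightly fiddly combination of generic flatness, constructibility of fiber dimension, and the fact that ``absolutely simple of prescribed type" is a constructible property along a finitely presented morphism. Once Step~1 is in hand, the torus-finding and torus-splitting steps are direct appeals to SGA3, and the identification of $\calg_B$ with $\calg \times_A B$ is immediate from the remark on open base change.
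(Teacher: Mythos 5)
Your argument is correct and follows essentially the same route as the paper: spread out (invert elements of $A$) so that the closure becomes a simple group scheme, split it after an \'etale base change, and use the open-base-change remark to identify the Zariski closure over $B$ with the base-changed group scheme, which is reduced by smoothness. The only real difference is that the paper invokes [SGA3, XXII 2.3] as a black box for \'etale-local splitness (after also arranging $A$ to be integrally closed, which is needed only for the remark following the lemma), whereas you unwind that citation into the standard steps of finding a maximal torus \'etale-locally and splitting its character lattice.
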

 \begin{proof}  By construction, $\calg$ is affine and finitely presented.  By [EGA IV, 9.7.7], after
 inverting some element of $A$, we may assume the fibers are geometrically integral, and by
 generic flatness, we may also assume that $\calg$ is flat.
 By [SGA3, XIX 2.5], by inverting an additional element, we may further assume that $\calg$
  is a simple group scheme; and by [EGA IV, 6.12.6, 6.13.5], we may assume $A$ is integrally closed.
  Thus, by [SGA3, XXII 2.3], there exists an \'etale $A$-algebra $B$ such that $\calg \underset{A}{\times} B$
   is a split simple group scheme.  So by the remark preceding the lemma,
   $\calg_B=(\calg\underset{A}{\times} B)^{red}=
\calg\underset{A}{\times} B$.
\end{proof}

Note that an \'etale extension of a normal  integral domain is a direct sum of integral domains [SGA1, I 9.2].
Replacing $A$ by any
summand of $B$, we obtain an algebra satisfying the hypotheses of Theorem 4.1 and in addition we
can assume from now on that $\calg$ is split.

\begin{lem}  Given a simple algebraic group $H$ over an algebraically closed field, there exists a finite
 set of proper subgroups $H_1, \dots, H_r$ such that every positive-dimensional proper subgroup of $H$
  is conjugate to a subgroup of $H_i$ for some $i = 1, \dots, r$.
\end{lem}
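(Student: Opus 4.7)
The plan is to reduce the statement to the finiteness of $H$-conjugacy classes of maximal positive-dimensional proper closed subgroups of $H$. Any positive-dimensional proper closed subgroup $K \subseteq H$ sits in a subgroup $M$ that is maximal among proper closed subgroups (this exists by Noetherianity of the lattice of closed subschemes). Since $M \supseteq K$, we have $\dim M \ge \dim K > 0$, so $M$ is itself a maximal positive-dimensional proper closed subgroup. Choosing $H_1,\dots,H_r$ to be representatives of these finitely many conjugacy classes then does the job.

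The structural step is the following dichotomy for such a maximal $M$: either $M$ is parabolic, or $M^\circ$ is reductive. Indeed, if $M^\circ$ had a nontrivial unipotent radical $U$, then $M \subseteq N_H(U)$, and by the Borel--Tits theorem $N_H(U)$ is contained in a proper parabolic subgroup $P$ of $H$. Maximality of $M$ forces $M = P$, so $M$ is parabolic. In either case $M = N_H(M^\circ)$ by maximality.

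Each case then contributes only finitely many conjugacy classes. Parabolic subgroups of the simple group $H$ fall into finitely many $H$-conjugacy classes, indexed by subsets of a system of simple roots with respect to a fixed Borel. The maximal connected reductive closed subgroups of $H$ likewise fall into finitely many conjugacy classes: the maximal-rank ones are classified by Borel--de Siebenthal through the extended Dynkin diagram, while the smaller-rank ones are classified by Dynkin in characteristic zero and by Seitz and Liebeck--Seitz in positive characteristic. For each such $M^\circ$, the finite group $N_H(M^\circ)/M^\circ$ gives only finitely many possibilities for $M$ itself, so the reductive case also yields finitely many conjugacy classes.

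The main obstacle is the reductive case, which in its full generality rests on the classification of maximal connected reductive subgroups of simple algebraic groups in all characteristics; the positive-characteristic analysis is notably more delicate than the characteristic-zero case treated by Dynkin. An alternative, more self-contained route would be a constructibility argument: for each $d$ with $0 < d < \dim H$, closed subgroup schemes of dimension $d$ are parametrized by a Noetherian scheme on which $H$ acts by conjugation, and one would need to show that the orbits of dimension $\dim H - \dim N_H(M)$ corresponding to maximal subgroups form a finite set. Either way, the lemma reduces to citing this finiteness together with the elementary parabolic count above.
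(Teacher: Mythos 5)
Your proposal follows essentially the same route as the paper: the paper's entire proof is a citation to Liebeck--Seitz [LS1, LS2, LS3], which classify the maximal positive-dimensional closed subgroups of simple algebraic groups up to conjugacy, and your argument ultimately invokes that same classification (supplemented by Borel--de Siebenthal and Dynkin). The scaffolding you add --- pushing $K$ up to a maximal proper closed subgroup $M$ and splitting into parabolic versus reductive cases via Borel--Tits --- is a fair outline of what underlies those references. One slip in the justification, however: the existence of a maximal proper closed subgroup above a given positive-dimensional proper closed $K$ does \emph{not} follow from ``Noetherianity of the lattice of closed subschemes.'' A Noetherian scheme satisfies the \emph{descending} chain condition on closed subschemes, not the ascending one, so this reasoning is backwards. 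The existence is nonetheless true, but for a different reason: take $M$ proper closed containing $K$ of maximal dimension. In any strictly ascending chain of proper closed subgroups starting at $M$, all identity components equal $M^\circ$, so every term lies in $N_H(M^\circ)$. Since $M^\circ$ is a nontrivial connected subgroup and $H$ is simple, $N_H(M^\circ)\neq H$; being a proper closed subgroup containing $K$, it has dimension $\dim M$, hence $N_H(M^\circ)^\circ = M^\circ$ and $N_H(M^\circ)/M^\circ$ is finite. This bounds the chain and yields the maximal $M$ you need.
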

\begin{proof} See Liebeck-Seitz [LS1, LS2, LS3].
\end{proof}
 \begin{lem}  There exists an open subset of $Spec(A)$ such that for every point $s$ of the subset,
  the closure of $\Gamma $ in $\calg(\Bbbk_s)$ is either finite or all of $\calg_s$ (By $\Bbbk_s$ we mean
  the residue field of $Spec(A)$ at $s$).
  \end{lem}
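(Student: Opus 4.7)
The plan is to apply Lemma 4.3 to parametrize the ways in which the fiber closure $\overline{\Gamma}_s \subseteq \calg_s$ can be a proper positive-dimensional subgroup, and then to use Chevalley's constructibility theorem to confine these possibilities to a proper closed subset of $\mathrm{Spec}(A)$.

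Concretely: since $\calg$ is split over $A$ (Lemma 4.2), we may regard the Liebeck--Seitz subgroups $H_1, \dots, H_r$ of Lemma 4.3 as proper closed $A$-subgroup schemes of $\calg$, coming from the Chevalley form. Because $\calg$ is simple, each normalizer $N_\calg(H_i)$ is again a proper closed subgroup scheme; replacing $H_i$ by $N_\calg(H_i)$, we may assume every $H_i$ is its own normalizer. Fix generators $\gamma_1, \dots, \gamma_m$ of $\Gamma$, and for each $i$ consider the quotient scheme $V_i = \calg/H_i$ over $A$ together with the closed subscheme
\[
W_i = \{ gH_i \in V_i \mid g^{-1}\gamma_j g \in H_i \text{ for all } 1 \le j \le m \},
\]
which parametrizes conjugates of $H_i$ containing $\Gamma$. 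On the generic fiber, $\Gamma$ is Zariski dense in $G$, hence is not contained in any proper subgroup; so the generic fiber of $W_i \to \mathrm{Spec}(A)$ is empty. By Chevalley's theorem, the image of $W_i$ in $\mathrm{Spec}(A)$ is constructible and misses the generic point, hence is contained in a proper closed subset $Z_i$.

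Let $U$ be the complement in $\mathrm{Spec}(A)$ of $\bigcup_i Z_i$, intersected with the open locus from Lemma 4.2 on which $\calg$ is already a split simple group scheme with geometrically integral fibers. For $s \in U$, suppose the closure $\overline{\Gamma}_s$ in $\calg(\Bbbk_s)$ is infinite. Passing to $\overline{\Bbbk_s}$, its identity component $\overline{\Gamma}_s^\circ$ is a positive-dimensional closed subgroup of $\calg_s$. If this identity component were proper, Lemma 4.3 applied in the residue characteristic would give a geometric conjugate $gH_ig^{-1}$ containing $\overline{\Gamma}_s^\circ$; then $\Gamma_s$ would normalize $\overline{\Gamma}_s^\circ$ and hence would lie in the normalizer of this conjugate, which by our self-normalizing reduction equals $gH_ig^{-1}$ itself. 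This would produce an $\overline{\Bbbk_s}$-point of $W_i$ over $s$, contradicting $s \in U$. Hence $\overline{\Gamma}_s^\circ = \calg_s$, and since $\calg_s$ is connected it follows that $\overline{\Gamma}_s = \calg_s$.

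The main obstacle I expect is uniformity across fibers of differing residue characteristic, which arises when $p = 0$: Lemma 4.3 is stated over a single algebraically closed field, whereas we need a single list $H_1, \dots, H_r$ that works for every $\Bbbk_s$. This will be handled by the fact that the Liebeck--Seitz classification of positive-dimensional proper subgroups of a Chevalley group is essentially characteristic-independent, with only finitely many characteristic-specific exceptional families which can all be adjoined to a finite master list of subgroup schemes defined over $\mathbb{Z}$. The secondary technical point--passing from the identity component to the possibly disconnected closure $\overline{\Gamma}_s$--is what forces the self-normalizer reduction; without it $\Gamma_s$ would only be contained in a conjugate of $N_\calg(H_i)$ rather than of $H_i$ itself.
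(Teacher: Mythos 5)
Your argument follows essentially the same route as the paper's: use Lemma~4.3 to get a finite list $H_1,\dots,H_r$, parametrize their conjugates, cut out the locus of $\mathrm{Spec}(A)$ where $\Gamma$ specializes into some conjugate of some $H_i$, and invoke Chevalley's constructibility theorem together with Zariski density at the generic point to conclude that this locus misses a nonempty open $U$. The substantive agreement is complete; only the bookkeeping differs. The paper parametrizes conjugates by taking $Y$ to be a disjoint union of $r$ copies of $G\times_K\overline K$ and forming a $Y$-subgroup scheme $Z\subset G\times_K Y$, whereas you parametrize via the quotient schemes $\calg/H_i$. Both are fine, and yours is arguably tidier.

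Two of the complications you flag are, however, unnecessary. First, the self-normalizer reduction and the detour through the identity component: Lemma~4.3 as stated applies to \emph{every} positive-dimensional proper subgroup, connected or not, so if $\overline{\Gamma}_s$ is positive-dimensional and proper it is already conjugate into some $H_i$; there is no need to pass to $\overline{\Gamma}_s^\circ$ and then climb back up through a normalizer. Second, the characteristic-uniformity worry about Liebeck--Seitz when $p=0$: since $A$ is finitely generated \emph{over the prime field}, every residue field $\Bbbk_s$ of $\mathrm{Spec}(A)$ has the same characteristic as $K$ (all contain $\mathbb{Q}$ when $p=0$, all contain $\mathbb{F}_p$ when $p>0$). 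There is therefore no mixed-characteristic family of fibers to worry about, and the paper simply works with $Y$ over $\overline K$. Your appeal to a $\mathbb{Z}$-form of the Liebeck--Seitz list is not needed and, as you yourself note, would require some care to justify.
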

  \begin{proof}  Let $Y$ denote the disjoint union of $r$ copies of $G\underset{K}{\times} \ol K$, where $r$ is
the number of conjugacy classes of maximal positive dimensional subgroups as in Lemma 4.3.  $Y$ can be thought of as
parametrizing the maximal positive dimensional subgroups of $G$.  Let $Z \subset G\underset {K}{\times} Y$ denote
the $Y$-subgroup scheme such that $Z_y \subset G$ is the subgroup parametrized  by $y \in Y$.
Let $\tau_1,\dots,\tau_\ell$ denote a finite set of generators for $\Gamma$, which we regard as
morphisms from $Spec(A)$ to $G$ and hence also as sections of $G\underset{K}{\times} Spec(A)\to
Spec(A)$.

Let $p_i, \; i = 1, 2, 3$ denote the projection map from $G\underset{K}{\times}Spec(A) \underset{K}{\times} Y$
 to the product in which the $i$-th factor is omitted.
The intersection
\begin{equation*}
W=\mathop{\cap}\limits^\ell_{j=1} p_1\left(p_2^{-1} (Z) \cap p^{-1}_3\left(\tau_j(Spec A)\right)\right)
\end{equation*}
as a subset of $Spec(A)\underset{K}{\times} Y$ is the set of ``bad points", i.e., the set of pairs $(\phi, y),
 \; \phi \in Spec(A)$ and $y \in Y$ such that  $\phi(\Gamma)\subseteq Z_y$.
 By Chevalley's  theorem, $W$ is a constructible set.  The same is true for its projection $\ol W$
 to $Spec(A)$.  Now, $\ol W$ omits the generic point (as $\Gamma$ is dense in $G_K$) and hence it omits
  a non-empty affine open set $U$ in $Spec(A)$.
  This proves the lemma.
\end{proof}

>From now on, we will replace $A$ by the coordinate ring of $U$.
Let $\bbf$ be the field of constants in $A$, i.e., the algebraic closure of the prime field in $A$.
Let us fix an (absolutely) irreducible  (almost faithful) representation $\rho$ of $G\hookrightarrow GL_m$
 defined over $A$.
 (Such a representation exists  over the prime field and can then be extended   to $A$).
 We define a character $\chi \colon \Gamma \to A$ by $\chi (\gamma) = tr\rho(\gamma), \gamma \in \Gamma$.
By Burnside's  Lemma  [CR, 36.1], $\chi(\Gamma)$ is infinite.  Now, if $\chi(\Gamma)\subset \bbf$, since
 specialization is injective on constants, for any specialization, $\phi(\Gamma)$ contains elements with
 infinitely many different character values, so $\phi(\Gamma)$ cannot lie within a finite group.   Therefore
  by Lemma 4.4, it is dense in $G$.  Otherwise, fix $\gamma$ such that $\chi(\gamma)\in A$ is non-constant.
   If $p=0$, as $A$ is finitely generated over $\bbq,  \; \chi(\gamma)-r$ is not invertible in $A$ for sufficiently large $r \in \bbn$.
    We choose large $r > m$ and a specialization $\phi$ such that $\phi(\chi(\gamma)) = r$.  A sum of $m$
    roots of unity cannot equal $r$, so $\phi(\gamma)$ is of infinite order.  Thus $\phi(\Gamma)$ is not
     finite and again we finish by Lemma 4.4.  Let now $p > 0$: In this
      case we regard $\chi$ as a dominant morphism from $Spec(A)$ to $\bba^1$. Let $C$
be a quasi-section [EGA IV, 17.16.1], i.e., a curve in $Spec(A)$ such that $\chi|_C$ is still dominant.  Let $k$ be the function field of $C$
        and $\phi$ the specialization from $A$ to $k$.  By construction, $\phi(\chi(\gamma))$ is not constant, and we are done by Lemma 4.4.
Theorem 4.1 is therefore proved.
\hfill $\square $

\section{Proof of Theorem B}

We are now ready to reap  the fruits of our labor and  prove Theorem B.
We will use the notation of  the introduction.

So let $\Gamma \le GL_n(F)$ be a finitely generated group Zariski dense in $G$, \  $S(\Gamma) = \ol G/Z(\ol G)$  and $S(\Gamma) = \mathop{\Pi}\limits^r_{i=1} S_i$.
If $r=0, \; G$ and $\Gamma$ are virtually  solvable and we are done.
Assume thus that $r > 0$ and some $S_i$, say $S_1$,  of type $X$, satisfies  $\calz(\tilde S_1) \neq \{ 1\}$.
Replacing $G$ by a suitable quotient if needed (which may also entail changing $n$), we can assume that $G^\circ$ itself is a product $\mathop{\Pi}\limits^r_{i=1} S_i$,
 all
 the factors $S_i$  are adjoint and isomorphic to one another, all of type $X$, and that $G/G^\circ$ acts transitively on the set
 of factors.
 So now $\Gamma$ and $G$ are subgroups of $G^\circ\rtimes Out (G^\circ)$.  Pulling
 back to $\widetilde{G^\circ} \rtimes Out (G^\circ)$, replacing $\Gamma$ by $\tilde \Gamma$, we can
 assume that each $S_i$ is simply connected.
 Let $\Gamma^\circ = \Gamma\cap G^\circ$ and let $\Gamma^1$ be the projection of $\Gamma^\circ$ to $S_1$.
 Note that $S_1$ can   also be regarded as a subgroup of $G^\circ$ and therefore of $GL_n$.
 As $\Gamma^\circ$ is of finite index in $\Gamma$, \ $\Gamma^1$ is also a finitely generated group.  There is therefore an
  integral domain $A$ in $F$ which is finitely generated over the prime field such that both $\Gamma$ and $\Gamma_1$
  are inside $GL_n(A)$.  The Zariski closure of $\Gamma_1$  is the connected absolutely simple group $S_1$.
  By Theorem 4.1, there exists a global field $k$ and a ring homomorphism $\phi\colon
   A\to k$ such that the Zariski closure of $\phi(\Gamma_1)$ in $GL_{n, k}$ is $F$-isomorphic
    to $S_1$.  The specialization $\phi$  induces also a homomorphism from $\Gamma $ to $GL_n(k)$.  Let
  $H$ be the Zariski closure of $\phi(\Gamma)$ in $GL_{n,k}$, and $H^\circ$ the connected component of $H$.
  It follows that $H^\circ = \mathop{\Pi}\limits^s_{j=1} R_j$ where $1\le s \le r$ and for every $j$, \
  $1\le j \le s, \; R_j$ is isomorphic to $S_1$.  Moreover, $H/H^\circ$ acts transitively on the set
  $\{ R_j\}^s_{j=1}$.

  Replacing $\Gamma$ by a suitable quotient   we may assume it is contained in $GL_n(k)$
  and its Zariski closure $G$ satisfies $G^\circ = \mathop{\Pi}\limits^r_{i=1}S_i$, each $S_i$
  is a simply connected  group
   of type $X$ and $G/G^\circ$ acts transitively on $\{ S_i\}^r_{i=1}$.

Now, as $\Gamma$ is finitely generated,  there exists a finite set of primes $S$ in $k$ (containing all the archimedean
ones) such that $\Gamma$ is in $GL_{n}( \calo_S)$.
Let $\calg$ be the Zariski closure of $\Gamma$ in $GL_{n, \calo_S}$.  Thus $\Gamma \subset\calg(\calo_S)$
 and $\calg_\eta =G$.
 Assume
  now that we are in case (ii)  of Theorem B, i.e., $p | \,\big | \calz( S_i)|$
  for some $i$.  Let $v$ be a prime outside $S$, so $\Gamma$ is in $\calg (\calo_v)$ and
   is Zariski dense in $\calg_{k_v}$.  Let $M$ be the closure of $\Gamma$ in the profinite
    group $\calg (\calo_v)$.  We can apply Theorem 2.1 to deduce that  the normal subgroup
     growth of $M$ is at least $n^{\log n}$.  It follows that the same applies to $\Gamma$.  Now, by (1.6),
      this is the maximal possible normal subgroup growth type; hence Theorem B(ii) is proved.

      To prove (i), we continue as follows:
      By [P2] we can find a global subfield $k'$ of $k$ and
 a semisimple, connected, simply connected algebraic group $G'$ over $k'$
       such that $G'\underset{k'}{\times} k$ is isogenous to $G^\circ$ and $[\Gamma^\circ,\Gamma^\circ]$
       is contained as a Zariski dense subgroup
of $G' (k')$ satisfying strong approximation.
As $p\nmid |\calz(G^\circ)|$, the isogeny is an
isomorphism.  Let $\calg'$ be a smooth group scheme defined over the $S'$-integers of $k'$
for some finite set $S'$ of primes of $k'$, such that $\calg'_\eta = G'$.  As $\calg'\underset{\calo_{S'}}{\times} \calo_S$ and $\calg^\circ$
 have isomorphic generic fibers, after enlarging $S$ and $S'$ we may assume these group schemes are isomorphic and $S'$
  is the restriction of $S$ to $k'$. Let $H$ be the closure of $\Gamma$ in $\calg(\hat\calo_S)$.
  By strong approximation, and by further enlarging $S$ and $S'$ if needed,
  we can assume that $[\Gamma^\circ,\Gamma^\circ]$ is dense in $\calg'(\hat\calo_{S'})
  \subseteq \calg(\hat\calo_S)$.
We claim $[H\colon\calg'(\hat\calo_{S'})]$ is finite, or equivalently, $[\Gamma^\circ\colon \Gamma^\circ\cap\calg'(k')]$ is finite.
Indeed, let $G'=\calg'_\eta$, ${G'}^{ad}=G'/\calz(G')$ and $\pi\colon G'\to
{G'}^{ad}$ the universal
 cover map.  Again by [P2], $\pi(\Gamma^\circ) \subset {G'}^{ad}(k')$.  As $\Gamma_0$ is finitely generated
  and ${G'}^{ad}(k')/\pi\big(G'(k')\big)$ is a torsion abelian group, the image of the former in the latter
   is finite.
  By Proposition 3.2, $t_n(H) \succcurlyeq n^{\log n/(\log\log n)^2}$ and hence the
  same is true for $\Gamma$.
 \hfill $\square $
\vskip.2cm
\noindent
{\it Acknowledgement.}  The authors are indebted to D. Hold, E. Hrushovski, A. Rapinchuk, D. Segal and V.N. Venkataramana for some helpful
 discussions.  Some of this work was done during visits to Yale University, University of Chicago
 and The Hebrew University.  Their hospitality and support are gratefully acknowledged, as well as the support
  of the NSF and the US-Israel Binational Science Foundation.


\author{
Michael Larsen \hfil $\qquad \qquad \, $ Alexander Lubotzky
\\
Department of Mathematics\hfil $\quad$  Institute of Mathematics
\\
Indiana University \hfil $\qquad \qquad \,  $ The Hebrew University
\\
Bloomington, Indiana 47405 USA \hfil \! \! \! \! \!\!\!\!\!\!\!\!\!\!\!\!\!\! Jerusalem 91904, Israel
\\
larsen@math.indiana.edu \hfil $\qquad$  \!\!\!\!\!alexlub@math.huji.ac.il}
\end{document}